\def\thefootnote{\fnsymbol{footnote}}
\newtheorem{thm}{Theorem}[section]
{}
\newtheorem{prop}[thm]{Proposition}
\newtheorem{lemma}[thm]{Lemma}
\newtheorem{defin}[thm]{Definition}
\newtheorem{remark}{remark}[section]
\newcommand{\Hom}{\operatorname{Hom}}
\renewcommand{\le}{\leqslant}
\renewcommand{\ge}{\geqslant}
\newcommand{\Sp}{\textbf{\rm \textbf{S}}}
\newcommand{\arcd}{\ar@{-}@/_/} 
\newcommand{\arcu}{\ar@{-}@/^/} 
\newcommand{\arcU}{\ar@{-}@/^10pt/} 
\newcommand{\tra}{\ar@{-}} 
\newcommand{\xysmall}{\xymatrixrowsep{5pt}\xymatrixcolsep{10pt}\xymatrix} 
 \def\de{\delta}
  \def\leq{\leqslant}  \def\geq{\geqslant}
\def\Hom{\mbox{\rm Hom}}
\def\dim{\mbox{\rm dim}\,}
\def\ggp#1#2{\left[\kern-3.2pt\left[{#1\atop #2}\right]\kern-3.2pt\right]}
\def\le{\leq}
\def\de{\delta}
\def\o{\otimes}
\begin{document}

\title{\bf  Half-Diagrams in Partition Algebras: A Geometric Perspective on Multiplicities}

\date{}

\author{Pei Wang}

\address{Wang: Teachers’ College, Beijing Union University, Beijing 100101, P. R. China}

\email{wangpei19@163.com}

\author{Changjing Zhuge}

\address{Zhuge: Beijing Institute for Scientiﬁc and Engineering Computing,
	Department of Mathematics, School of Mathematics, Statistics and Mechanics, Beijing University of Technology, Beijing, 100124, P. R. China}

\email{zhuge@bjut.edu.cn}

 \maketitle

\begin{abstract}
	This paper studies the restriction multiplicities of half-diagram modules for the partition algebra and their geometric interpretations. By specializing the Bowman–De Visscher–Orellana formula [\cite{BVC}, Theorem 4.3] for restriction multiplicities of standard modules in the partition algebra, we compute these multiplicities and provide interpretations in terms of planar triangles and conic sections. Additionally, through the decomposition of half-diagrams, we explain the intrinsic reasons underlying this connection between geometry and algebra.

\end{abstract}

\renewcommand{\thefootnote}{\alph{footnote}}
\setcounter{footnote}{-1} \footnote{2020 Mathematics Subject
Classification: 16D90;16G10;16E20.}
\renewcommand{\thefootnote}{\alph{footnote}}
\setcounter{footnote}{-1} \footnote{Keywords: partition algebra; Temperley-Lieb algebra; half-diagram; conic sections; planar geometry.}
\setcounter{footnote}{-1}

\section{Introduction}
In representation theory, the concept of multiplicity classically refers to the number of times an irreducible module appears as a composition factor in a given module. This problem is deeply rooted in the Jordan–Hölder theorem, which asserts that  the composition factors of a module are uniquely determined. For the symmetric group $\mathfrak{S}_n$, the irreducible Specht  modules   $\Sp(\mu)$ over the complex numbers  $\mathbb{C}$, indexed by integer partitions $\mu \vdash n$, provide a natural framework for studying multiplicities. The Littlewood–Richardson rule provides a combinatorial formula for the multiplicities $c_{\lambda,\mu}^\nu$, counting how often the outer tensor product $\Sp(\lambda) \boxtimes \Sp(\mu)$ ( $\lambda \vdash m$,  $\mu \vdash n$) appears in the restriction of $\Sp(\nu)$ ( $\nu \vdash (m+n)$)  to a Young subgroup $\mathfrak{S}_m \times \mathfrak{S}_n$. The reader is referred to Ref. \cite{jk}. 

The diagram algebras --- the Brauer algebra, the partition algebra, and the Temperley--Lieb algebra --- extend this framework to standard  (or cell)  modules (see definition in Section 2.3). König and Xi’s inflation theory \cite{KX0,KX1} constructs these algebras as iterated inflations of group algebras of the symmetric groups using modules spanned by \textit{half-diagrams} (see \S2.2). Consequently, their restriction multiplicities often reduce to combinations of Specht module multiplicities. For example, Hartmann and Paget \cite{HP1} studied restriction and induction of standard modules for Brauer algebras, proving that multiplicities in their standard module filtrations are well-defined. Bowman, De Visscher, and Orellana \cite{BVC}  derived an explicit formula (see \S3.1) for partition algebras involving three-part Littlewood--Richardson coefficients and Kronecker coefficients.

In the work \cite{WP}, the multiplicities of standard modules for the Temperley--Lieb algebra $\mathrm{TL}_n(\delta)$ were studied. A distinctive feature of this algebra is that its standard modules are precisely spanned by half-diagrams. Let \(V_n(r)\) denote the half-diagram module of \(\mathrm{TL}_n(\delta)\) indexed by a non-negative integer \(r\), where \(r\) represents the number of labeled dots (see Section 6, Fig.\ref{2PD}).
Remarkably, in the semisimple case, when \(V_{m+n}(r)\) is viewed as a \(\mathrm{TL}_m \otimes \mathrm{TL}_n\)-module, the multiplicity $E_{p,q}^r$ of \(V_m(p) \boxtimes V_n(q)\) in \(V_{m+n}(r)\) is characterized by planar triangles: \(E_{p,q}^r  = 1\) occurs precisely when \(p, q, r\) can form a triangle (including degenerate cases where equality holds in the triangle inequality or side lengths vanish); otherwise \(E_{p,q}^r= 0\).
 This equivalence transforms a representation-theoretic problem into a geometric one.

A natural question arises:  
\textbf{For the partition algebra, do the restriction multiplicities of its half-diagram modules (which correspond to standard modules indexed by one-part partitions) admit a similar geometric interpretation?}

We answer the question in the affirmative. Specializing the Bowman–De Visscher–Orellana formula [\cite{BVC}, Theorem 4.3] to one-part partitions reduces the original multiplicity problem to a  Diophantine system, denoted (E1) in \S 3.3.  The multiplicity $E_{p,q}^r$ is thereby identified with the number of non-negative integer solutions of (E1).  Solving this system not only settles the arithmetic question but also uncovers surprising links with planar triangles and conic sections.

Furthermore, adopting the diagrammatic approach of Bowman, De Visscher, and Orellana, we investigate the restriction of `walled' half-diagrams. The resulting decomposition provides an intuitive diagrammatic interpretation of the variables in the Diophantine system (E1). This interpretation aims to clarify the combinatorial meaning of these unknowns and to explain why each non-negative integer solution to (E1) contributes exactly 1 to the multiplicity \(E_{p, q}^{r}\) when its values are assigned to the corresponding indices.

The remainder of the paper is organized as follows.  Section 2 recalls partition algebras together with their standard modules.  Section 3 derives the Diophantine system (E1) from the specialized Bowman–De Visscher–Orellana formula.  Section 4 interprets (E1) through the decomposition of walled half-diagrams, while Section 5 translates these combinatorial data into geometric language.  Finally, Section 6 extends the discussion to Temperley–Lieb algebras and their Grothendieck groups.

\section{Partition algebra}
The partition algebra, introduced independently around 1990 by Martin \cite{Martin94} and Jones \cite{Jones1} to model the Potts model in statistical mechanics, finds widespread application across mathematics and physics, with related developments in \cite{Martin91,Martin96,Martin00}. It has been extensively studied from multiple perspectives: fundamental structural and algebraic properties, including generators-and-relations presentations and monoid structure \cite{HR05,East}, the basic construction and semisimplicity criteria \cite{HR05}, quasi-hereditary structure \cite{Martin96}, cellularity \cite{Xi}, and block classification in positive characteristic \cite{BDVK15}; representation-theoretic and combinatorial aspects, including 
Robinson–Schensted correspondence for the partition algebra \cite{MR98},
Murphy elements and Specht modules \cite{HR05}, character theory and dimensions of irreducible modules \cite{H,FaH,BHH}, and combinatorial models such as RSK insertion and set-partition tableaux for diagram-algebra representations \cite{HL05,HJ20}; and developments in invariant theory via Schur–Weyl duality with the symmetric group \cite{HR05}, including generalizations of the fundamental theorems of invariant theory in the partition-algebra setting \cite{BH,BDM22II} and the establishment of an integral Schur–Weyl duality \cite{BDM22}. Connections to classical symmetric-group representation theory have also been explored, notably through the study of Kronecker coefficients using partition algebras \cite{BVC}. Moreover, its subalgebras, including the Temperley–Lieb algebra \cite{TL} and the Brauer algebra \cite{Br}, play significant roles in areas such as invariant theory and knot theory.

\subsection{Definitions}

Let $\mathbb{N}^+$ denote the set of positive integers, and fix $n\in\mathbb{N}^+$. Define $M=\{1,\ldots,n,,1',\ldots,n'\}$.
A set-partition $d$ of  $M$ is a collection of subsets of $M$ such that

(1) The intersection of any two different subsets in the collection of subsets is an empty set, and, 

(2) The union of all sets in the collection of subsets is equal to $M$.

Each subset of $M$ in a partition $d$  is called a \textit{block}. 

A set-partition $d$ of $M$
can  be represented by a diagram called an $(n, n)$-\textit{partition diagram} (or a partition diagram of degree $n$).
A partition diagram consists of a rectangular frame with marked dots $1,2, \ldots, n$, placed on the upper boundary and marked dots $1^{\prime}, 2^{\prime}, \ldots, n^{\prime}$ placed on the lower boundary (reading both from left to right) and a collection of strings pairing up various dots.
Such a diagram represents $d$ if its connected components correspond to the blocks of $d$, and two such diagrams are considered equivalent if their connected components agree.
Figure \ref{PD} illustrates two equivalent diagrams (parts (1) and (2)) that represent the set partition  $d=\left\{\left\{1,2^{\prime}, 3^{\prime}\right\},\{2,3,4\},\left\{1^{\prime}\right\},\left\{4^{\prime}\right\}\right\}$.

\begin{figure}[ht]
	\begin{equation*}
		\begin{array}{c}
			\begin{tikzpicture}
			\draw [green,thick](-0.4,1.5) --(2.8,1.5);
			\draw [green,thick](-0.4,0) --(2.8,0);
			\draw [green,thick](-0.4,1.5) --(-0.4,0);
			\draw [green,thick](2.8,1.5) --(2.8,0);
			\node at (0,1.8) {$1$}; 
			\node at (0.8,1.8) {$2$};
			\node at (1.6,1.8) {$3$};
			\node at (2.4,1.8) {$4$};
			\node at (0,-0.3) {$1'$}; 
			\node at (0.8,-0.3) {$2'$};
			\node at (1.6,-0.3) {$3'$};
			\node at (2.4,-0.3) {$4'$};
			\node at (0,1.5) {\textbullet}; 
			\node at (0.8,1.5) {\textbullet};
			\node at (1.6,1.5) {\textbullet};
			\node at (2.4,1.5) {\textbullet};
			\node at (0,0) {\textbullet}; 
			\node at (0.8,0) {\textbullet};
			\node at (1.6,0) {\textbullet};
			\node at (2.4,0) {\textbullet};
			\draw[thick] (0.8,1.5) .. controls (1,1.1) and (1.4,1.1) ..  (1.6,1.5); 
			\draw [thick](0,1.5) .. controls (0.2,0.8) and (0.6,0.4) ..  (0.8,0); 
			\draw [thick](0.8,0) .. controls (1,0.4) and (1.4,0.4) ..  (1.6,0); 
			\draw [thick](1.6,1.5) .. controls (1.8,1.1) and (2.2,1.1) ..  (2.4,1.5);
			\node at (1.2,-0.7) { (1)};
			\node at (3,-1.2) { Two equivalent  diagrams};
			\node at (4.8,-0.7) { (2)};
			\draw [green,thick](3.2,1.5) --(6.4,1.5); 
			\draw [green,thick](3.2,0) --(6.4,0);
			\draw [green,thick](3.2,1.5) --(3.2,0);
			\draw [green,thick](6.4,1.5) --(6.4,0);
			\node at (3.6,1.8) {$1$}; 
			\node at (4.4,1.8) {$2$};
			\node at (5.2,1.8) {$3$};
			\node at (6,1.8) {$4$};
			\node at (3.6,-0.3) {$1'$}; 
			\node at (4.4,-0.3) {$2'$};
			\node at (5.2,-0.3) {$3'$};
			\node at (6,-0.3) {$4'$};
			\node at (3.6,1.5) {\textbullet}; 
			\node at (4.4,1.5) {\textbullet};
			\node at (5.2,1.5) {\textbullet};
			\node at (6,1.5) {\textbullet};
			\node at (3.6,0) {\textbullet}; 
			\node at (4.4,0) {\textbullet};
			\node at (5.2,0) {\textbullet};
			\node at (6,0) {\textbullet};
			\draw [thick](4.4,1.5) .. controls (4.6,0.7) and (5.8,0.7) ..  (6,1.5); 
			\draw [thick](3.6,1.5) .. controls (3.8,0.8) and (4.2,0.4) ..  (4.4,0); 
			\draw [thick](3.6,1.5) .. controls (3.8,0.8) and (4.8,0.8) ..  (5.2,0); 
			\draw [thick](4.4,0) .. controls (4.6,0.4) and (5,0.4) ..  (5.2,0); 
			\draw [thick](5.2,1.5) .. controls (5.4,1.1) and (5.8,1.1) ..  (6,1.5);
		\end{tikzpicture}
		\end{array}
		\begin{array}{c}
				\begin{tikzpicture}
				\draw [green,thick](-0.4,1.5) --(2.8,1.5);
				\draw [green,thick](-0.4,0) --(2.8,0);
				\draw [green,thick](-0.4,1.5) --(-0.4,0);
				\draw [green,thick](2.8,1.5) --(2.8,0);
				\node at (0,1.8) {$1$}; 
				\node at (0.8,1.8) {$2$};
				\node at (1.6,1.8) {$3$};
				\node at (2.4,1.8) {$4$};
				\node at (0,-0.3) {$1'$}; 
				\node at (0.8,-0.3) {$2'$};
				\node at (1.6,-0.3) {$3'$};
				\node at (2.4,-0.3) {$4'$};
				\node at (0,1.5) {\textbullet}; 
				\node at (0.8,1.5) {\textbullet};
				\node at (1.6,1.5) {\textbullet};
				\node at (2.4,1.5) {\textbullet};
				\node at (0,0) {\textbullet}; 
				\node at (0.8,0) {\textbullet};
				\node at (1.6,0) {\textbullet};
				\node at (2.4,0) {\textbullet};
				\draw [thick](0,1.5) .. controls (0.2,1.1) and (0.6,1.1) ..  (0.8,1.5); 
				\draw [thick](0.8,1.5) .. controls (1,0.8) and (1.2,0.4) ..  (1.6,0); 
				\node at (1.2,-0.7) { (3)};
				\draw [thick](0,0) .. controls (0.2,0.4) and (0.6,0.4) ..  (0.8,0); 
				\draw [thick](1.6,1.5) .. controls (1.8,1.1) and (2.2,1.1) ..  (2.4,1.5);
				\draw [thick](0.8,0) .. controls (0.8,1.4) and (2.5,0.6) ..  (2.4,1.5); 
				\draw [red,dashed,thick](1.1,0.7) circle (8pt);
				\node [red]at (2.15,0.7) {\footnotesize\mbox{-crossing}};
				\node at (1.2,-1.2) { Crossing diagram};
			\end{tikzpicture}
	\end{array}
	\end{equation*}
	\vskip -0.3cm
	\caption{Partition diagrams }
	\label{PD}
\end{figure}
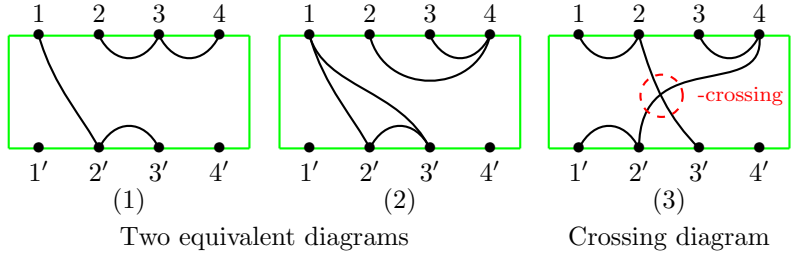

          We further impose the following order on $M$: $1<2<\cdots<n<n'<\cdots<1'$. 
          Recall that two blocks $B_1$ and $B_2$ are crossing if there exist $k_1$ and $k_2$ in block $B_1$, and $k_3$ and $k_4$ in $B_2$, which satisfy $k_1<k_3<k_2<k_4$. As shown in Figure 1(3), blocks $\{1,2,3'\}$ and $\{3,4,1',2'\}$ are crossing. 
          
          If every pair of blocks in a set-partition $d$ is non-crossing, then $d$ is called non-crossing (e.g., see Figure \ref{PD}(1) and (2) for illustrations). Crossings can typically be described using the symmetric group, which will be introduced later. However, the focus of Section 6, the Temperley-Lieb diagrams, involves non-crossing configurations.

Let $\mathbb{C}$ be the field of complex numbers, and let $\delta \in \mathbb{C}$ be a fixed number.
  The partition algebra $P_n(\delta)$ of degree $n$,  which we simply write as $P_n$, is first a  $\mathbb{C}$-vector space spanned by all  set-partitions of $M$ as a basis.  Multiplication is then defined by the `concatenation' of diagrams. Let $d_1$ and $d_2$ be two  partition diagrams. Then $d_1\cdotp d_2$ is defined as follows: first, place $d_1$ above $d_2$ so that the dots on $d_1$’s lower boundary coincide with those on $d_2$’s upper boundary (see Figure 2). The resulting diagram may include interior components (i.e., components that are not connected to the upper boundary of $d_1$ or the lower boundary of $d_2$). Suppose there are $t$ interior components, and let $d'$ be the diagram obtained by removing all such interior components. Then define the product as $d_1 \cdot d_2 = \delta^t d'$.

Figure \ref{MD} illustrates the product $d_1 \cdot d_2$. After stacking the two diagrams, two interior components appear (highlighted in red).

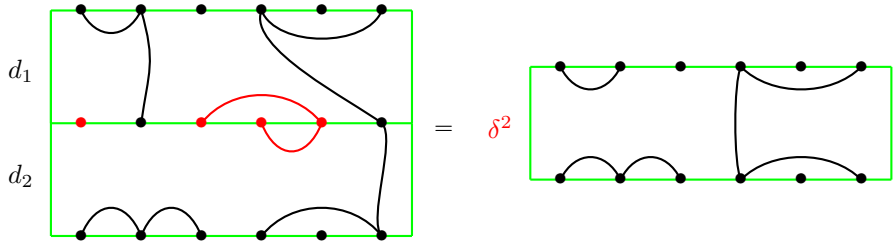
\begin{figure}[h]
	\begin{equation*}
		\begin{array}{c}
			\begin{tikzpicture}
				\draw [green,thick](-0.4,1.5) --(4.4,1.5);
				\draw [green,thick](-0.4,0) --(4.4,0);
				\draw [green,thick](-0.4,1.5) --(-0.4,0);
				\draw [green,thick](4.4,1.5) --(4.4,0);
					\draw [green,thick](-0.4,1.5) --(-0.4,-1.5);
						\draw [green,thick](-0.4,-1.5) --(4.4,-1.5);
							\draw [green,thick](4.4,-1.5) --(4.4,0);
		
				\node at (0,1.5) {\textbullet}; 
				\node at (0.8,1.5) {\textbullet};
				\node at (1.6,1.5) {\textbullet};
				\node at (2.4,1.5) {\textbullet};
					\node at (3.2,1.5) {\textbullet};
						\node at (4,1.5) {\textbullet};
				\node [red] at (0,0) {\textbullet}; 
				\node at (0.8,0) {\textbullet};
				\node [red]at (1.6,0) {\textbullet};
				\node [red]at (2.4,0) {\textbullet};
				\node [red]at (3.2,0) {\textbullet};
				\node at (4,0) {\textbullet};
				\node at (0,-1.5) {\textbullet}; 
				\node at (0.8,-1.5) {\textbullet};
				\node at (1.6,-1.5) {\textbullet};
				\node at (2.4,-1.5) {\textbullet};
				\node at (3.2,-1.5) {\textbullet};
				\node at (4,-1.5) {\textbullet};
				\draw [thick](0,1.5) .. controls (0.2,1.1) and (0.6,1.1) ..  (0.8,1.5); 
				\draw [thick](0.8,1.5) .. controls (1,0.8) and (0.9,0.7) ..  (0.8,0); 
				\draw [thick](2.4,1.5) .. controls (2.6,1) and (3.8,1) ..  (4,1.5); 
					\draw [thick](2.4,1.5) .. controls (2.2,1) and (3.6,0.3) ..  (4,0); 
						\draw [red,thick](1.6,0) .. controls (2,0.5) and (2.8,0.5) ..  (3.2,0); 
						\draw [red,thick](2.4,0) .. controls (2.6,-0.5) and (3,-0.5) ..  (3.2,0); 
							\draw [thick](0,-1.5) .. controls (0.2,-1) and (0.6,-1) ..  (0.8,-1.5); 
								\draw [thick](0.8,-1.5) .. controls (1,-1) and (1.4,-1) ..  (1.6,-1.5); 
									\draw [thick](2.4,-1.5) .. controls (2.8,-1) and (3.6,-1) ..  (4,-1.5); 
										\draw [thick](4,0) .. controls (4.2,-0.2) and (3.8,-1.3) ..  (4,-1.5); 
					\node at (-0.8,0.7) {$d_1$}; 
						\node at (-0.8,-0.7) {$d_2$}; 
			\end{tikzpicture}
		\end{array} =\, 
		\begin{array}{c}
			\begin{tikzpicture}
				\draw [green,thick](-0.4,1.5) --(4.4,1.5);
			\draw [green,thick](-0.4,0) --(4.4,0);
			\draw [green,thick](-0.4,1.5) --(-0.4,0);
			\draw [green,thick](4.4,1.5) --(4.4,0);

			\node at (0,1.5) {\textbullet}; 
			\node at (0.8,1.5) {\textbullet};
			\node at (1.6,1.5) {\textbullet};
			\node at (2.4,1.5) {\textbullet};
			\node at (3.2,1.5) {\textbullet};
			\node at (4,1.5) {\textbullet};
			\node at (0,0) {\textbullet}; 
			\node at (0.8,0) {\textbullet};
			\node at (1.6,0) {\textbullet};
			\node at (2.4,0) {\textbullet};
			\node at (3.2,0) {\textbullet};
			\node at (4,0) {\textbullet};
			\draw [thick](0,1.5) .. controls (0.2,1.1) and (0.6,1.1) ..  (0.8,1.5); 
			\draw [thick](0,0) .. controls (0.2,0.4) and (0.6,0.4) ..  (0.8,0); 
					\draw [thick](0.8,0) .. controls (1,0.4) and (1.4,0.4) ..  (1.6,0); 
	\draw [thick](2.4,0) .. controls (2.8,0.4) and (3.6,0.4) ..  (4,0); 
		\draw [thick](2.4,1.5) .. controls (2.8,1.1) and (3.6,1.1) ..  (4,1.5); 
			\draw [thick](2.4,1.5) .. controls (2.3,1.3) and (2.3,0.2) ..  (2.4,0); 
					\node [red] at (-0.8,0.7) {$\delta^2$}; 
			\end{tikzpicture}
		\end{array}
	\end{equation*}
	\vskip -0.3cm
	\caption{Multiplication of diagrams }
	\label{MD}
\end{figure}

It is known that $P_n$ is generated by the elements $e_{i,j}$, $p_i$, $s_{i,j}$$(1\leq i<n,1\leq j\leq n)$ and illustrated in Figure \ref{Gene} below.

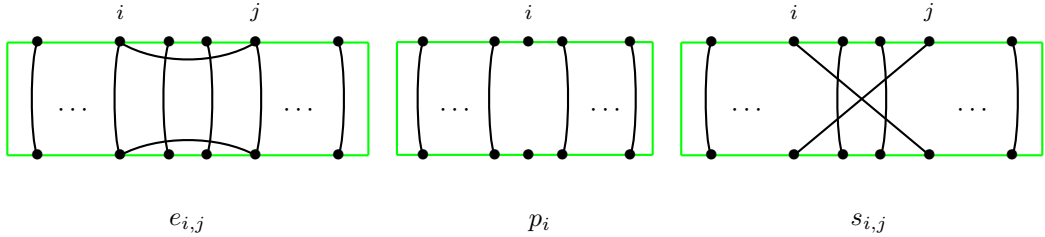
\begin{figure}[h]
	\begin{equation*}
		\begin{array}{c}
			\begin{tikzpicture}
			\draw [green,thick](-0.4,1.5) --(4.4,1.5);
			\draw [green,thick](-0.4,0) --(4.4,0);
			\draw [green,thick](-0.4,1.5) --(-0.4,0);
			\draw [green,thick](4.4,1.5) --(4.4,0);

			\node at (0,1.5) {\textbullet}; 
			\node at (1.1,1.5) {\textbullet};
			\node at (1.75,1.5) {\textbullet};
			\node at (2.25,1.5) {\textbullet};
					\node at (1.1,1.9) { \footnotesize$i$};
						\node at (2.9,1.9) { \footnotesize$j$};
			\node at (2.9,1.5) {\textbullet};
			\node at (4,1.5) {\textbullet};
			\node at (0,0) {\textbullet}; 
			\node at (1.1,0) {\textbullet};
			\node at (1.75,0) {\textbullet};
			\node at (2.25,0) {\textbullet};
			\node at (2.9,0) {\textbullet};
			\node at (4,0) {\textbullet};
				\node at (0.5,0.6) { $\cdots$};
					\node at (3.5,0.6) { $\cdots$};
		\draw [thick](0,1.5) .. controls (-0.1,1.3) and (-0.1,0.2) ..  (0,0); 
			\draw [thick](1.1,1.5) .. controls (1,1.3) and (1,0.2) ..  (1.1,0); 
			\draw [thick](2.25,1.5) .. controls (2.35,1.3) and (2.35,0.2) ..  (2.25,0); 
				\draw[thick] (2.9,1.5) .. controls (3,1.3) and (3,0.2) ..  (2.9,0); 
					\draw [thick](4,1.5) .. controls (4.1,1.3) and (4.1,0.2) ..  (4,0); 
				\draw [thick](1.75,1.5) .. controls (1.65,1.3) and (1.65,0.2) ..  (1.75,0); 
				\node at (2,-0.9) { $e_{i,j}$};
				\draw [thick](1.1,1.5) .. controls (1.6,1.2) and (2.4,1.2) ..  (2.9,1.5);
					\draw [thick](1.1,0) .. controls (1.6,0.27) and (2.4,0.27) ..  (2.9,0);
				
			\end{tikzpicture}
		\end{array}
		\begin{array}{c}
			\begin{tikzpicture}
				\draw [green,thick](-0.2,1.5) --(3.2,1.5);
			\draw [green,thick](-0.2,0) --(3.2,0);
			\draw [green,thick](-0.2,1.5) --(-0.2,0);
			\draw [green,thick](3.2,1.5) --(3.2,0);

			\node at (0.15,1.5) {\textbullet}; 
			\node at (1.1,1.5) {\textbullet};
			\node at (1.55,1.5) {\textbullet};
			\node at (1.55,1.9) { \footnotesize$i$};
						\node at (2,1.5) {\textbullet};
									\node at (2,0) {\textbullet};
			\node at (2.9,1.5) {\textbullet};
			\node at (0.15,0) {\textbullet}; 
			\node at (1.1,0) {\textbullet};
			\node at (1.55,0) {\textbullet};
			\node at (2.9,0) {\textbullet};
			\node at (0.6,0.6) { $\cdots$};
			\node at (2.6,0.6) { $\cdots$};
			\draw [thick](0.15,1.5) .. controls (0.05,1.3) and (0.05,0.2) ..  (0.15,0); 
			\draw [thick](1.1,1.5) .. controls (1,1.3) and (1,0.2) ..  (1.1,0); 
				\draw [thick](2,1.5) .. controls (2.1,1.3) and (2.1,0.2) ..  (2,0); 
			\draw [thick](2.9,1.5) .. controls (3,1.3) and (3,0.2) ..  (2.9,0); 
				\node at (1.7,-0.9) { $p_i$};
			\end{tikzpicture}
		\end{array}
		\begin{array}{c}
				\begin{tikzpicture}
				\draw [green,thick](-0.4,1.5) --(4.4,1.5);
				\draw [green,thick](-0.4,0) --(4.4,0);
				\draw [green,thick](-0.4,1.5) --(-0.4,0);
				\draw [green,thick](4.4,1.5) --(4.4,0);
	
				\node at (0,1.5) {\textbullet}; 
				\node at (1.1,1.5) {\textbullet};
				\node at (1.75,1.5) {\textbullet};
				\node at (2.25,1.5) {\textbullet};
				\node at (1.1,1.9) { \footnotesize$i$};
				\node at (2.9,1.9) { \footnotesize$j$};
				\node at (2.9,1.5) {\textbullet};
				\node at (4,1.5) {\textbullet};
				\node at (0,0) {\textbullet}; 
				\node at (1.1,0) {\textbullet};
				\node at (1.75,0) {\textbullet};
				\node at (2.25,0) {\textbullet};
				\node at (2.9,0) {\textbullet};
				\node at (4,0) {\textbullet};
				\node at (0.5,0.6) { $\cdots$};
				\node at (3.5,0.6) { $\cdots$};
				\draw [thick](0,1.5) .. controls (-0.1,1.3) and (-0.1,0.2) ..  (0,0); 
				\draw [thick](2.25,1.5) .. controls (2.35,1.3) and (2.35,0.2) ..  (2.25,0); 
				\draw [thick](4,1.5) .. controls (4.1,1.3) and (4.1,0.2) ..  (4,0); 
				\draw [thick](1.75,1.5) .. controls (1.65,1.3) and (1.65,0.2) ..  (1.75,0); 
				\node at (2.1,-0.9) { $s_{i,j}$};
				\draw [thick](1.1,1.5) -- (2.9,0);
				\draw [thick](1.1,0) -- (2.9,1.5);
						\end{tikzpicture}
	\end{array}
	\end{equation*}
	\vskip -0.3cm
	\caption{Generators of $P_n$}
	\label{Gene}
\end{figure}

A block is called a \textit{propagating block} if it contains dots on both the upper and lower boundaries. For example, in Figure \ref{PD}(1), the block $\{1,2',3'\}$ is propagating.
The propagating number of a partition diagram $d$, denoted by $pn(d)$, is the number of propagating blocks in the diagram.
During concatenation, interior components are removed and no new  component can connect  the two external boundaries; existing propagating blocks may merge but 
cannot increase in number, hence the propagating number never increases:
$$
pn(d_1\cdotp d_2)\le \min\{pn(d_1),\,pn(d_2)\}.
$$

\subsection{Half-Diagrams and Inflation}

Given a partition diagram of degree \(n\) with \(r\) propagating blocks, let us attempt to decompose it. Because a partition diagram can be represented in non-unique ways, we first \textit{standardize} the diagram. Specifically, for each propagating block, we use exactly one string to connect its upper part to its lower part, choosing the leftmost dots in the upper and lower sub-blocks as the string’s endpoints.

Subsequently, we cut all propagating strings and replace the dots within the propagating blocks with white dots –  called \textit{labeled dots} – to indicate their previous connections to both upper and lower blocks. Blocks containing such labeled dots are termed \textit{labeled blocks} (see Figure \ref{De} for an example).  
Through this operation, we obtain two distinct half-diagrams (upper and lower), collectively referred to as an \((n, r)\)-\textit{half-diagram}. Formally, an \((n, r)\)-half-diagram is a set partition of \(\{1, 2, \dots, n\}\) featuring exactly \(r\) labeled blocks. The vector space spanned by all such \((n, r)\)-half-diagrams, with this set as basis, is denoted \(V_n(r)\).  

The middle segment of the partition diagram corresponds uniquely to an element of the symmetric group \(\mathfrak{S}_r\), with this correspondence being uniquely determined through diagram standardization.

\begin{figure}[h]
	\begin{equation*}
		\begin{array}{c}
			\begin{tikzpicture}
				\draw [green,thick](-0.4,1.5) --(3.8,1.5);
				\draw [green,thick](-0.4,0) --(3.8,0);
				\draw [green,thick](-0.4,1.5) --(-0.4,0);
				\draw [green,thick](3.8,1.5) --(3.8,0);

					\node at (0,1.8) {1}; 
				\node at (0.5,1.8) {2}; 
				\node at (1,1.8) {3};
				\node at (1.5,1.8) {4};
				\node at (2,1.8) {5};
				\node at (2.5,1.8) {6};
				\node at (3,1.8) {7};
				\node at (3.5,1.8) {8};
					\node at (0,1.5) {\textbullet}; 
				\node at (0.5,1.5) {\textbullet}; 
				\node at (1,1.5) {\textbullet};
				\node at (1.5,1.5) {\textbullet};
				\node at (2,1.5) {\textbullet};
				\node at (2.5,1.5) {\textbullet};
				\node at (3,1.5) {\textbullet};
				\node at (3.5,1.5) {\textbullet};
					\node at (0,-0.3) {1$'$}; 
				\node at (0.5,-0.3) {2$'$}; 
				\node at (1,-0.3) {3$'$};
				\node at (1.5,-0.3) {4$'$};
				\node at (2,-0.3) {5$'$};
				\node at (2.5,-0.3) {6$'$};
				\node at (3,-0.3) {7$'$};
				\node at (3.5,-0.3) {8$'$};
					\node at (0,0) {\textbullet}; 
			\node at (0.5,0) {\textbullet}; 
			\node at (1,0) {\textbullet};
			\node at (1.5,0) {\textbullet};
			\node at (2,0) {\textbullet};
			\node at (2.5,0) {\textbullet};
			\node at (3,0) {\textbullet};
			\node at (3.5,0) {\textbullet};
				\draw [red,thick](0,1.5) .. controls (0.5,1) and (1,1) ..  (1.5,1.5); 
             	\draw  [red,thick](0,1.5) .. controls (0,0.8) and (0.5,0.6) ..  (0.5,0); 
             	    	\draw [thick](1,1.5) .. controls (1.2,1.1) and (1.8,1.1) ..  (2,1.5); 
             			\draw [thick](2,1.5) .. controls (2.2,1.28) and (2.3,1.28) ..  (2.5,1.5); 
             			\draw  [thick](1,1.5) .. controls (1,1.1) and (0,0.8) ..  (0,0); 
             			      	\draw   [blue,thick](3.5,1.5) .. controls (3.5,0.7) and (1,1) ..  (1,0); 
             			      	\draw [blue,thick] (1,0).. controls (1.1,0.3) and (1.9,0.3) ..  (2,0); 
             			      	 	\draw [purple,thick] (1.5,0).. controls (1.6,0.5) and (2.9,0.5) ..  (3,0); 
             			     
	\draw [green,dashed,thick](-0.8,1) --(4.2,1);
	\draw [green,dashed,thick](-0.8,0.5) --(4.2,0.5);			
			
			\end{tikzpicture}
		\end{array}
		\begin{array}{c}
			\begin{tikzpicture}
			\draw [green,thick](-0.4,1.5) --(3.8,1.5);
			\draw [green,thick](-0.4,0) --(3.8,0);

			\node at (0,1.8) {1}; 
			\node at (0.5,1.8) {2}; 
			\node at (1,1.8) {3};
			\node at (1.5,1.8) {4};
			\node at (2,1.8) {5};
			\node at (2.5,1.8) {6};
			\node at (3,1.8) {7};
			\node at (3.5,1.8) {8};
			\node at (0,1.46) {{\huge\textopenbullet}}; 
			\node at (0.5,1.5) {\textbullet}; 
			\node at (1,1.46) {{\huge\textopenbullet}};
			\node at (1.5,1.46) {{\huge\textopenbullet}};
			\node at (2,1.46) {{\huge\textopenbullet}};
			\node at (2.5,1.46) {{\huge\textopenbullet}};
			\node at (3,1.5) {\textbullet};
			\node at (3.5,1.5) {\textbullet};
			\node at (0,-0.3) {1$'$}; 
			\node at (0.5,-0.3) {2$'$}; 
			\node at (1,-0.3) {3$'$};
			\node at (1.5,-0.3) {4$'$};
			\node at (2,-0.3) {5$'$};
			\node at (2.5,-0.3) {6$'$};
			\node at (3,-0.3) {7$'$};
			\node at (3.5,-0.3) {8$'$};
			\node at (0,-0.05) {{\huge\textopenbullet}}; 
			\node at (0.5,-0.05) {{\huge\textopenbullet}}; 
			\node at (1,-0.05) {{\huge\textopenbullet}};
			\node at (1.5,0) {\textbullet};
			\node at (2,-0.05) {{\huge\textopenbullet}};
			\node at (2.5,0) {\textbullet};
			\node at (3,0) {\textbullet};
			\node at (3.5,0) {\textbullet};
			\draw [red,thick](0,1.5) .. controls (0.5,1) and (1,1) ..  (1.5,1.5); 
			\draw [thick](1,1.5) .. controls (1.2,1.1) and (1.8,1.1) ..  (2,1.5); 
			\draw [thick](2,1.5) .. controls (2.2,1.28) and (2.3,1.28) ..  (2.5,1.5); 
			\draw [blue,thick] (1,0).. controls (1.1,0.3) and (1.9,0.3) ..  (2,0); 
			\draw [purple,thick] (1.5,0).. controls (1.6,0.5) and (2.9,0.5) ..  (3,0); 
			
				\node at (-1,0.7) { $\longleftrightarrow$};	
		\end{tikzpicture}
		\end{array}
			\begin{array}{c}
		\begin{pmatrix}
			1 & 2& 3 \\
			2 & 1 & 3 \\
		\end{pmatrix}	\in \Sigma_{3}
	\end{array}
	\end{equation*}
	\vskip -0.3cm
	\caption{Diagram decomposition}
	\label{De}
\end{figure}
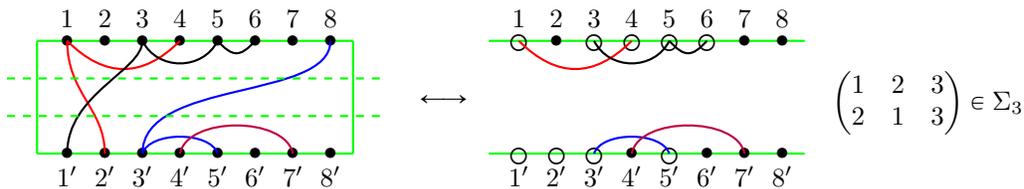

 A partition diagram can act on a half-diagram, thereby making the space of half-diagrams into a $P_n$-module.
 Let $d$ be a partition diagram in $P_n$ and $v$ be an $(n, r)$-half-diagram. The action  $d.v$ of $d$ on $v$ is defined as the following. First, the diagram $d$ stacks above the diagram $v$, and the dots of $d$'s lower boundary coincides with the dots of $v$ to form the new diagram $v'$. The top row, $top(v')$, in the diagram $v'$ is still a half-diagram, and a block in the half-diagram is a labeled block if and only if it is connected to the labeled block in $v$. Furthermore, $v'$ may contain components that are not connected to the top row, and it is assumed that it contains a total of $t$ such components. If $top(v')$ has $r$ labeled blocks, then $d.v=\delta^t top(v')$; otherwise, $d.v=0$. In fact, the number of labeled blocks will not increase during the preceding process. Figure \ref{action} is an illustration. Following the action, there are two labeled blocks and one component (marked in red) that is not connected to the top row.

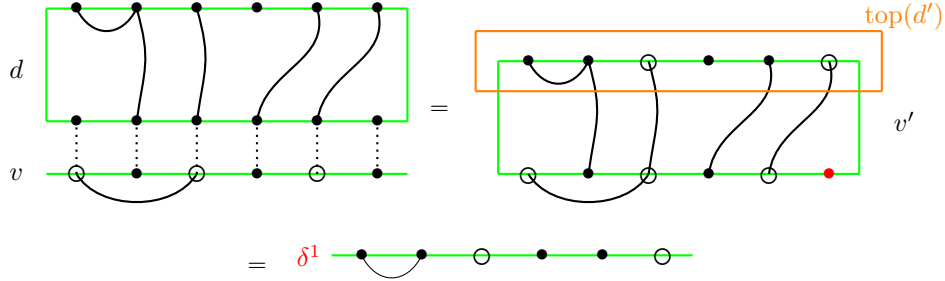
\begin{figure}[ht]
	\begin{equation*}
		\begin{array}{c}
			\begin{tikzpicture}
				\draw [green,thick](-0.4,1.5) --(4.4,1.5);
				\draw [green,thick](-0.4,0) --(4.4,0);
				\draw [green,thick](-0.4,1.5) --(-0.4,0);
				\draw [green,thick](4.4,1.5) --(4.4,0);
			\draw [green,thick](-0.4,-0.7) --(4.4,-0.7);
	\draw [dotted,thick](0,-0.7) --(0,0);
\draw [dotted,thick](0.8,-0.7) --(0.8,0);
\draw [dotted,thick](1.6,-0.7) --(1.6,0);
\draw [dotted,thick](2.4,-0.7) --(2.4,0);
\draw [dotted,thick](3.2,-0.7) --(3.2,0);
\draw [dotted,thick](4,-0.7) --(4,0);
				\node at (0,1.5) {\textbullet}; 
				\node at (0.8,1.5) {\textbullet};
				\node at (1.6,1.5) {\textbullet};
				\node at (2.4,1.5) {\textbullet};
				\node at (3.2,1.5) {\textbullet};
				\node at (4,1.5) {\textbullet};
				\node  at (0,0) {\textbullet}; 
				\node at (0.8,0) {\textbullet};
				\node at (1.6,0) {\textbullet};
				\node at (2.4,0) {\textbullet};
				\node at (3.2,0) {\textbullet};
				\node at (4,0) {\textbullet};
				\node at (0,-0.72) {{\huge\textopenbullet}}; 
				\node at (0.8,-0.7) {\textbullet};
				\node at (1.6,-0.72) {{\huge\textopenbullet}};
				\node at (2.4,-0.7) {\textbullet};
				\node at (3.2,-0.72) {{\huge\textopenbullet}};
				\node at (4,-0.7) {\textbullet};
				\draw [thick](0,1.5) .. controls (0.2,1.1) and (0.6,1.1) ..  (0.8,1.5); 
				\draw [thick](0.8,1.5) .. controls (1,0.8) and (0.9,0.7) ..  (0.8,0); 
			\draw [thick](1.6,1.5) .. controls (1.8,0.8) and (	1.7,0.7) ..  (1.6,0); 
			\draw [thick](3.2,1.5) .. controls (3.4,0.8) and (	2.5,0.7) ..  (2.4,0); 
				\draw [thick](4,1.5) .. controls (4.2,0.8) and (3.3,0.7) ..  (3.2,0); 
				
					\draw [thick](0,-0.7) .. controls (0.3,-1.2) and (1.3,-1.2) ..  (1.6,-0.7); 
				
				\node at (-0.8,0.7) {$d$}; 
				\node at (-0.8,-0.7) {$v$};

			\end{tikzpicture}
		\end{array} =\, 
		\begin{array}{c}
			\begin{tikzpicture}
				\draw [green,thick](-0.4,1.5) --(4.4,1.5);
				\draw [green,thick](-0.4,0) --(4.4,0);
				\draw [green,thick](-0.4,1.5) --(-0.4,0);
				\draw [green,thick](4.4,1.5) --(4.4,0);
				\node at (0,1.5) {\textbullet}; 
				\node at (0.8,1.5) {\textbullet};
				\node at (1.6,1.46) {{\huge\textopenbullet}};
				\node at (2.4,1.5) {\textbullet};
				\node at (3.2,1.5) {\textbullet};
				\node at (4,1.46) {{\huge\textopenbullet}};
				\node at (0,-0.04) {{\huge\textopenbullet}}; 
				\node at (0.8,0) {\textbullet};
				\node at (1.6,-0.04) {{\huge\textopenbullet}};
				\node at (2.4,0) {\textbullet};
				\node at (3.2,-0.04) {{\huge\textopenbullet}};
				\node [red]at (4,0) {\textbullet};
				\draw[thick] (0,1.5) .. controls (0.2,1.1) and (0.6,1.1) ..  (0.8,1.5); 
					\draw[thick] (0.8,1.5) .. controls (1,0.8) and (0.9,0.7) ..  (0.8,0); 
							\draw[thick] (1.6,1.5) .. controls (1.8,0.8) and (	1.7,0.7) ..  (1.6,0); 
						\draw [thick](3.2,1.5) .. controls (3.4,0.8) and (	2.5,0.7) ..  (2.4,0); 
						\draw [thick](4,1.5) .. controls (4.2,0.8) and (3.3,0.7) ..  (3.2,0); 
				\draw[thick] (0,0) .. controls (0.3,-0.5) and (1.3,-0.5) ..  (1.6,0); 
				
					\draw [orange,thick](-0.7,1.9) --(4.7,1.9);
				\draw [orange,thick](-0.7,1.1) --(4.7,1.1);
				\draw [orange,thick](-0.7,1.9) --(-0.7,1.1);
				\draw [orange,thick](4.7,1.9) --(4.7,1.1);
				\node [orange,thick]at (5,2.1) {top($d'$)}; 
				\node at (5,0.7) {$v'$}; 
			\end{tikzpicture}
		\end{array}
	\end{equation*}
\begin{equation*}=
	\begin{array}{c}
		\begin{tikzpicture}
			\draw [green,thick](-0.4,0.7) --(4.4,0.7);
			\node at (0,0.7) {\textbullet}; 
			\node at (0.8,0.7) {\textbullet};
			\node at (1.6,0.66) {{\huge\textopenbullet}};
			\node at (2.4,0.7) {\textbullet};
			\node at (3.2,0.7) {\textbullet};
			\node at (4,0.66) {{\huge\textopenbullet}};
			\draw (0,0.7) .. controls (0.2,0.3) and (0.6,0.3) ..  (0.8,0.7); 

			\node [red]at (-0.7,0.7) {$\delta^1$}; 
		\end{tikzpicture}
	\end{array}
\end{equation*}
	\vskip -0.3cm
	\caption{Action on half-diagram  }
	\label{action}
\end{figure}

\begin{figure}[ht]
	\begin{equation*}
		\begin{array}{c}
			\begin{tikzpicture}
				\draw [green,thick](-0.4,1.5) --(4.4,1.5);
				\draw [green,thick](-0.4,0) --(4.4,0);
				\draw [green,thick](-0.4,1.5) --(-0.4,0);
				\draw [green,thick](4.4,1.5) --(4.4,0);
				\draw [green,thick](-0.4,-0.7) --(4.4,-0.7);
				\draw [dotted,thick](0,-0.7) --(0,0);
				\draw [dotted,thick](0.8,-0.7) --(0.8,0);
				\draw [dotted,thick](1.6,-0.7) --(1.6,0);
				\draw [dotted,thick](2.4,-0.7) --(2.4,0);
				\draw [dotted,thick](3.2,-0.7) --(3.2,0);
				\draw [dotted,thick](4,-0.7) --(4,0);
				\node at (0,1.5) {\textbullet}; 
				\node at (0.8,1.5) {\textbullet};
				\node at (1.6,1.5) {\textbullet};
				\node at (2.4,1.5) {\textbullet};
				\node at (3.2,1.5) {\textbullet};
				\node at (4,1.5) {\textbullet};
				\node  at (0,0) {\textbullet}; 
				\node at (0.8,0) {\textbullet};
				\node at (1.6,0) {\textbullet};
				\node at (2.4,0) {\textbullet};
				\node at (3.2,0) {\textbullet};
				\node at (4,0) {\textbullet};
				\node at (0,-0.72) {{\huge\textopenbullet}}; 
				\node at (0.8,-0.7) {\textbullet};
				\node at (1.6,-0.72) {{\huge\textopenbullet}};
				\node at (2.4,-0.7) {\textbullet};
				\node at (3.2,-0.72) {{\huge\textopenbullet}};
				\node at (4,-0.7) {\textbullet};
				\draw [thick](0,1.5) .. controls (0.2,1.1) and (0.6,1.1) ..  (0.8,1.5); 
				\draw [thick](0.8,1.5) .. controls (1,0.8) and (0.9,0.7) ..  (0.8,0); 
				\draw [thick](1.6,1.5) .. controls (1.8,1.1) and (	2.2,1.1) ..  (2.4,1.5); 
				\draw [thick](3.2,1.5) .. controls (3.4,0.8) and (	2.5,0.7) ..  (2.4,0); 
				\draw [thick](4,1.5) .. controls (4.2,0.8) and (3.3,0.7) ..  (3.2,0); 
				
				\draw [thick](0,-0.7) .. controls (0.3,-1.2) and (1.3,-1.2) ..  (1.6,-0.7); 
				
				\node at (-0.8,0.7) {$d$}; 
				\node at (-0.8,-0.7) {$v$};

			\end{tikzpicture}
		\end{array} =\, 
		\begin{array}{c}
			\begin{tikzpicture}
				\draw [green,thick](-0.4,1.5) --(4.4,1.5);
				\draw [green,thick](-0.4,0) --(4.4,0);
				\draw [green,thick](-0.4,1.5) --(-0.4,0);
				\draw [green,thick](4.4,1.5) --(4.4,0);
				\node at (0,1.5) {\textbullet}; 
				\node at (0.8,1.5) {\textbullet};
				\node at (1.6,1.5) {\textbullet};
				\node at (2.4,1.5) {\textbullet};
				\node at (3.2,1.5) {\textbullet};
				\node at (4,1.46) {{\huge\textopenbullet}};
				\node at (0,-0.04) {{\huge\textopenbullet}}; 
				\node at (0.8,0) {\textbullet};
				\node at (1.6,-0.04) {{\huge\textopenbullet}};
				\node at (2.4,0) {\textbullet};
				\node at (3.2,-0.04) {{\huge\textopenbullet}};
				\node at (4,0) {\textbullet};
				\draw[thick] (0,1.5) .. controls (0.2,1.1) and (0.6,1.1) ..  (0.8,1.5); 
				\draw[thick] (0.8,1.5) .. controls (1,0.8) and (0.9,0.7) ..  (0.8,0); 
				\draw [thick](1.6,1.5) .. controls (1.8,1.1) and (	2.2,1.1) ..  (2.4,1.5); 
				\draw [thick](3.2,1.5) .. controls (3.4,0.8) and (	2.5,0.7) ..  (2.4,0); 
				\draw [thick](4,1.5) .. controls (4.2,0.8) and (3.3,0.7) ..  (3.2,0); 
				\draw[thick] (0,0) .. controls (0.3,-0.5) and (1.3,-0.5) ..  (1.6,0); 
				
				\draw [orange,thick](-0.7,1.9) --(4.7,1.9);
				\draw [orange,thick](-0.7,1.1) --(4.7,1.1);
				\draw [orange,thick](-0.7,1.9) --(-0.7,1.1);
				\draw [orange,thick](4.7,1.9) --(4.7,1.1);
				\node [orange,thick]at (5,2.1) {top($d'$)}; 
				\node at (5,0.7) {$v'$}; 
			\node at (2.3,2.7) {$\text{top}(d')$ has fewer labeled blocks than $v$}; 		
			\end{tikzpicture}
		\end{array}
	\end{equation*}
	\begin{equation*}=0
	\end{equation*}
	\vskip -0.3cm
	\caption{Action resulting in $d \cdot v = 0$   }
	\label{action0}
\end{figure}
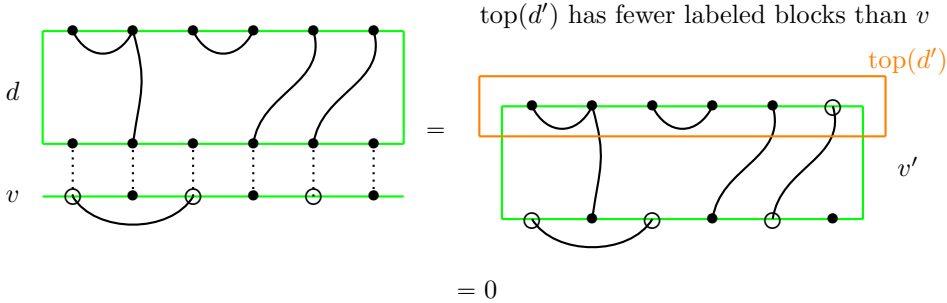

\textbf{Remark 2.1 }The action on half-diagrams satisfies $(d_1 \cdot d_2) \cdot v = d_1 \cdot (d_2 \cdot v)$, where $d_1, d_2$ are partition diagrams and $v$ is a half-diagram. To demonstrate this, we may complete $v$ into a  partition diagram by adding a copy of itself below without strings connecting the upper and lower half-diagrams. The associativity of partition diagram composition then yields the validity of the above equation.

From the decomposition just obtained, the partition algebra can be viewed as the result of a sequence of extensions ---\textit{iterated inflations} in the sense of König and Xi--- of group algebras of symmetric groups along corresponding families of half-diagram modules. 
Inflation theory, developed systematically by Koenig and Xi \cite{KX0,KX1,Xi}, provides an alternative characterisation of cellular algebras, a notion introduced by Graham and Lehrer in their study of the Hecke algebras of symmetric groups \cite{GL}. The symmetric-group algebra, the Temperley–Lieb algebra, the Brauer algebra, and the partition algebra are all cellular. In the cellular framework each of these algebras carries a family of so-called standard (or cell) modules that parametrise its irreducible modules. In the next subsection we review the construction of these standard modules and show that the half-diagram module is precisely the standard module indexed by the one-part partition—that is, by a single-row Young diagram.

\subsection{Standard Modules}

The group algebra $\mathbb{C}\mathfrak{S}_n$ of the symmetric group is semisimple over $\mathbb{C}$ by Maschke's theorem,  so every module decomposes as a direct sum of irreducible modules.  
Classical representation theory shows that the irreducible modules of $\mathbb{C}\mathfrak{S}_n$ are precisely the Specht modules $\Sp(\lambda)$ indexed by partitions $\lambda\vdash n$; these Specht modules also serve as the standard (or cell) modules for $\mathbb{C}\mathfrak{S}_n$ in the cellular algebra sense.

An integer partition of $n$ is a weakly decreasing sequence of positive integers $\lambda = (\lambda_1, \lambda_2, \dots, \lambda_r)$ with  $
\sum_{i=1}^\ell\lambda_i=n$, denoted $\lambda \vdash n$. 
Each summand $\lambda_i$ is called a part.
The Young diagram of $\lambda$ consists of left-aligned rows with $\lambda_i$ boxes. The number of parts $\ell(\lambda)$ equals the number of rows of the associated Young diagram.
A standard Young tableau  is a filling of the boxes with numbers $1, \dots, n$ such that entries increase weakly across rows and strictly down columns. 
Figure \ref{fig:young-diagram-tableau} illustrates the Young diagram of shape $(3,2,1)$ and a standard Young tableau of the same shape. Specht modules are spanned by polytabloids, constructed from Young tableaux via antisymmetrization (see \cite{jk}); in this article we restrict attention to integer‐partition indices.

\begin{figure}[ht]
	\centering
	
	\begin{minipage}[t]{0.45\linewidth}
		\centering
		$\yng(3,2,1)$
		
		\captionof{subfigure}{Young diagram $\lambda=(3,2,1)$}
		\label{fig:young-diagram}
	\end{minipage}
	\hfill
	\begin{minipage}[t]{0.45\linewidth}
		\centering
		$\young(124,35,6)$   
		
		\captionof{subfigure}{A standard Young tableau of shape $(3,2,1)$}
		\label{fig:young-tableau}
	\end{minipage}
	
	\caption{Young diagram and Young tableau}
	\label{fig:young-diagram-tableau}
\end{figure}

Now turning to the partition algebra $P_n$, we let $V(n,r)$ denote the span of all $(n,r)$-partition diagrams with exactly $r$ propagating blocks, endowed with a natural $(P_n(\delta),\mathfrak{S}_r)$-bimodule structure.
As vector spaces, the standard modules are defined by:
\[
\Delta_n(\nu) \cong V(n, r) \otimes_{\mathbb{C}\mathfrak{S}_r} \Sp(\nu),
\]
where $\otimes_{\mathbb{C}\mathfrak{S}_r}$ denotes balanced tensor product over $\mathbb{C}\mathfrak S_r$, i.e. the quotient of the ordinary tensor product by the balanced relations $v\sigma\otimes x = v\otimes\sigma x.$ Figure \ref{Br} depicts the balancing relation diagrammatically.

\begin{figure}[ht]
	\begin{equation*}
		\begin{array}{c}
		\begin{tikzpicture}
			
				\draw [green,thick](-1.9,0.7) --(2.4,0.7);
			
			\draw [green,thick](-0.5,-1) --(1.9,-1);
			\node at (-1.5,0.7) {\textbullet}; 
			\node at (-1,0.7) {\textbullet}; 
			\node at (-0.5,0.7) {\textbullet}; 
			\node at (0,0.7) {\textbullet}; 
			\node at (0.5,0.7) {\textbullet};
			\node at (1,0.7) {\textbullet};
			\node at (1.5,0.7) {\textbullet};
			\node at (2,0.7) {\textbullet};
			\node at (-1.5,1) {\small$1$}; 
			\node at (-1,1) {\small$2$}; 
			\node at (-0.5,1) {\small$3$}; 
			\node at (0,1) {\small$4$}; 
			\node at (0.5,1) {\small$5$};
			\node at (1,1) {\small$6$};
			\node at (1.5,1) {\small$7$};
			\node at (2,1) {\small$8$};
			
			\draw [thick](-1.5,0.7)  .. controls (-1.2,0.4) and (-0.8,0.4) ..  (-0.5,0.7) ; 
			\draw [thick](-1,0.7) .. controls (-0.7,0.45) and (-0.3,0.45) ..  (0,0.7) ;

			\node at (0,-1) {\textbullet}; 
			\node at (0.5,-1) {\textbullet};
			\node at (1,-1) {\textbullet};
			\node at (1.5,-1) {\textbullet};

			\node at (0,-1.3) {$1'$}; 
			\node at (0.5,-1.3) {$2'$};
			\node at (1,-1.3) {$3'$};
			\node at (1.5,-1.3) {$4'$};

			%

			\draw[blue,thick] (-1,0.7)   .. controls (-0.6,0.26) and (1.2,0.5) ..  (1,-1) ; 
			\draw [blue,thick](0.5,0.7)  .. controls (0.35,-0.2) and (0.2,0) ..  (0.5,-1) ; 
			
			\draw [blue,thick](1.5,0.7)   .. controls (1.3,-0.2) and (1.4,0) ..  (1.5,-1) ; 
			\draw [blue,thick](2,0.7)   .. controls (2,-1.2) and (0.2,0) ..  (0,-1) ; 			
				\draw [green,dashed,thick](-1.9,-0.14) --(2.3,-0.14);
							\node at (0.34,-0.14) {\textbullet};
							\node at (0.8,-0.14) {\textbullet};
				\node at (1.38,-0.14) {\textbullet};
				\node at (1.8,-0.14) {\textbullet};
			\node at (2.9,-0.1) {$\o ~ x$};	
			
		\end{tikzpicture}
		\end{array}
		\longleftrightarrow
		\begin{array}{c}
			\begin{tikzpicture}

				\draw [green,thick](-1.9,0.7) --(2.4,0.7);
				
				\draw [green,thick](-0.5,-1) --(1.9,-1);
				\node at (-1.5,0.7) {\textbullet}; 
				\node at (-1,0.7) {\textbullet}; 
				\node at (-0.5,0.7) {\textbullet}; 
				\node at (0,0.7) {\textbullet}; 
				\node at (0.5,0.7) {\textbullet};
				\node at (1,0.7) {\textbullet};
				\node at (1.5,0.7) {\textbullet};
				\node at (2,0.7) {\textbullet};
				\node at (-1.5,1) {\small$1$}; 
				\node at (-1,1) {\small$2$}; 
				\node at (-0.5,1) {\small$3$}; 
				\node at (0,1) {\small$4$}; 
				\node at (0.5,1) {\small$5$};
				\node at (1,1) {\small$6$};
				\node at (1.5,1) {\small$7$};
				\node at (2,1) {\small$8$};
				
				\draw [thick](-1.5,0.7)  .. controls (-1.2,0.4) and (-0.8,0.4) ..  (-0.5,0.7) ; 
				\draw[thick] (-1,0.7) .. controls (-0.7,0.45) and (-0.3,0.45) ..  (0,0.7) ;

				\node at (0,-1) {\textbullet}; 
				\node at (0.5,-1) {\textbullet};
				\node at (1,-1) {\textbullet};
				\node at (1.5,-1) {\textbullet};

				\node at (0,-1.3) {$1'$}; 
				\node at (0.5,-1.3) {$2'$};
				\node at (1,-1.3) {$3'$};
				\node at (1.5,-1.3) {$4'$};

				%

				\draw[blue,thick] (-1,0.7)   .. controls (-0.6,0.26) and (0.5,0.5) ..  (0.5,-1) ; 
				\draw [blue,thick](0.5,0.7)  .. controls (0.5,0.2) and (0.,-0.4) ..  (0.,-1) ; 
				
				\draw [blue,thick](1.5,0.7)   .. controls (1.5,0.2) and (1.,-0.4) ..  (1,-1) ; 
				\draw [blue,thick](2,0.7)   .. controls (2.,0.2) and (1.5,-0.4) ..  (1.5,-1) ; 			
					\node at (3.8,-0.1) {$\o 	\small	\begin{pmatrix}
							1 & 2 & 3 & 4 \\
							2 & 3 &4&1 \\
						\end{pmatrix} \cdot x$};	
				
			\end{tikzpicture}
		\end{array}
	\end{equation*}
	\vskip -0.3cm
	\caption{Balanced relation}
	\label{Br}
\end{figure}
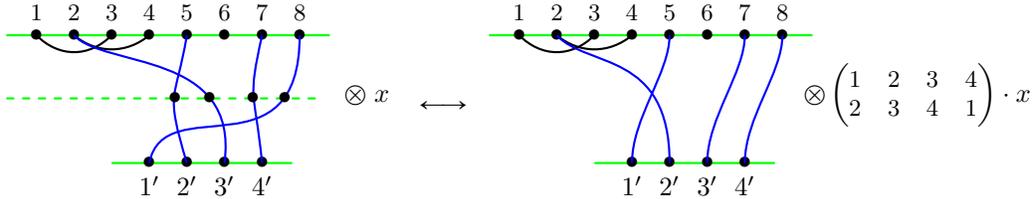

The module action is defined below.
For $v \in V(n, r)$, $x \in \Sp(\nu)$, and $d \in P_n(\delta)$, the action $d \cdot (v \otimes x)$ is defined via diagram concatenation:
Concatenate $d$ and $v$ to obtain $\delta^t v'$, where $v'$ is an $(n, r)$-diagram and $t$ is the number of closed loops.
 If $v'$ has fewer than $r$ propagating blocks, set $d \cdot (v \otimes x) = 0$.
Otherwise, let
$d \cdot (v \otimes x) = \delta^t v' \otimes  x.$

 Denote by $\Lambda_n$ the set of all partitions of $n$.
Let $\Lambda_{\leq n} = \bigcup_{0 \leq i \leq n} \Lambda_i$. For $\delta \notin \{0, 1, \dots, 2n - 2\}$, the algebra $P_n(\delta)$ is semisimple, and $\{\Delta_n(\nu) \mid \nu \in \Lambda_{\leq n}\}$ forms a complete set of non-isomorphic  irreducible modules.

Later we will encounter the case of one-part partitions.
When $\nu = (r)$, the Specht module $\Sp((r))$ is the trivial $\mathfrak{S}_r$-module $\mathbbm{1}_r$. The standard module then reduces to the half-diagram module:
$$
\Delta_n((r))
= V(n,r)\otimes_{\mathbb{C}\mathfrak{S}_r}\mathbbm{1}_r\;\cong\;V_n(r)\;,
$$
 Figure \ref{HO} illustrates this isomorphism.

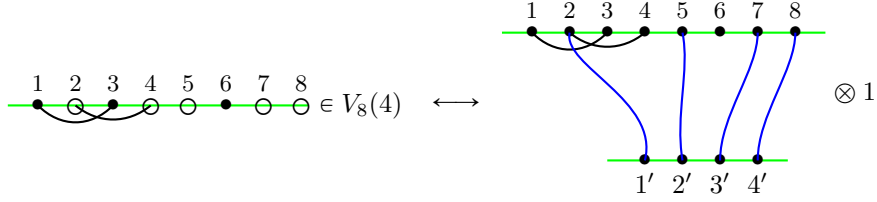
\begin{figure}[ht]
	\begin{equation*}
		\begin{array}{c}
			\begin{tikzpicture}
			
			\draw [green,thick](-1.9,0.7) --(2.1,0.7);

			\node at (2.8,0.7) {$\in V_{8}(4) $}; 
			
			\node at (-1.5,0.7) {\textbullet}; 
			\node at (-1,0.66) {{\huge\textopenbullet}}; 
			\node at (-0.5,0.7) {\textbullet}; 
			\node at (0,0.66) {{\huge\textopenbullet}}; 
			\node at (0.5,0.66) {{\huge\textopenbullet}};
			\node at (1,0.7) {\textbullet};
			\node at (1.5,0.66) {{\huge\textopenbullet}};
			\node at (2,0.66) {{\huge\textopenbullet}};
			\node at (-1.5,1) {\small$1$}; 
			\node at (-1,1) {\small$2$}; 
			\node at (-0.5,1) {\small$3$}; 
			\node at (0,1) {\small$4$}; 
			\node at (0.5,1) {\small$5$};
			\node at (1,1) {\small$6$};
			\node at (1.5,1) {\small$7$};
			\node at (2,1) {\small$8$};

			\draw [thick](-1.5,0.7)  .. controls (-1.2,0.4) and (-0.8,0.4) ..  (-0.5,0.7) ; 
			\draw[thick] (-1,0.7) .. controls (-0.7,0.45) and (-0.3,0.45) ..  (0,0.7) ; 

		\end{tikzpicture} 
		\end{array}
		\longleftrightarrow
		\begin{array}{c}
			\begin{tikzpicture}

				\draw [green,thick](-1.9,0.7) --(2.4,0.7);
				
				\draw [green,thick](-0.5,-1) --(1.9,-1);
				\node at (-1.5,0.7) {\textbullet}; 
				\node at (-1,0.7) {\textbullet}; 
				\node at (-0.5,0.7) {\textbullet}; 
				\node at (0,0.7) {\textbullet}; 
				\node at (0.5,0.7) {\textbullet};
				\node at (1,0.7) {\textbullet};
				\node at (1.5,0.7) {\textbullet};
				\node at (2,0.7) {\textbullet};
				\node at (-1.5,1) {\small$1$}; 
				\node at (-1,1) {\small$2$}; 
				\node at (-0.5,1) {\small$3$}; 
				\node at (0,1) {\small$4$}; 
				\node at (0.5,1) {\small$5$};
				\node at (1,1) {\small$6$};
				\node at (1.5,1) {\small$7$};
				\node at (2,1) {\small$8$};
				
				\draw [thick](-1.5,0.7)  .. controls (-1.2,0.4) and (-0.8,0.4) ..  (-0.5,0.7) ; 
				\draw[thick] (-1,0.7) .. controls (-0.7,0.45) and (-0.3,0.45) ..  (0,0.7) ;

				\node at (0,-1) {\textbullet}; 
				\node at (0.5,-1) {\textbullet};
				\node at (1,-1) {\textbullet};
				\node at (1.5,-1) {\textbullet};

				\node at (0,-1.3) {$1'$}; 
				\node at (0.5,-1.3) {$2'$};
				\node at (1,-1.3) {$3'$};
				\node at (1.5,-1.3) {$4'$};

				%

				\draw[blue,thick] (-1,0.7)   .. controls (-1.1,0.26) and (0.2,-0.2) ..  (0,-1) ; 
				
				\draw [blue,thick](0.5,0.7)  .. controls (0.6,0.2) and (0.4,-0.4) ..  (0.5,-1) ; 
				
				\draw [blue,thick](1.5,0.7)   .. controls (1.5,0.2) and (1.,-0.4) ..  (1,-1) ; 
				\draw [blue,thick](2,0.7)   .. controls (2.,0.2) and (1.5,-0.4) ..  (1.5,-1) ; 			
				\node at (2.8,-0.1) {$\o \; 1$};	
			\end{tikzpicture}
		\end{array}
	\end{equation*}

	\vskip -0.3cm
	\caption{Half-diagram module as one-part partitions standard module}
	\label{HO}
\end{figure}

To conclude this section, we generalize \cite[Proposition 3]{HP1}
from the Brauer algebra to the partition algebra.

\begin{prop}\label{aalc} Keep the notations as above.
	Let $M$ and $N$ be $ \mathbb{C}\mathfrak{S}_{r}$-modules, and let  $V(n, r)$ be the module of all $(n, r)$-diagrams having precisely $r$ propagating blocks of $P_{n}$. Then $$
	\Hom_{P_{n}} (V(n, r)\o_{ \mathbb{C}\mathfrak{S}_{r}} M, V(n, r)\o_{ \mathbb{C}\mathfrak{S}} N)\cong \Hom_{ \mathbb{C}\mathfrak{S}_{r}} (M, N).$$

\end{prop}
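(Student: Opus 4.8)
The plan is to show that the natural map
\[
\Phi\colon\ \Hom_{\mathbb{C}\mathfrak{S}_{r}}(M,N)\ \longrightarrow\ \Hom_{P_{n}}\!\bigl(V(n,r)\otimes_{\mathbb{C}\mathfrak{S}_{r}}M,\ V(n,r)\otimes_{\mathbb{C}\mathfrak{S}_{r}}N\bigr),\qquad f\ \longmapsto\ \mathrm{id}_{V(n,r)}\otimes f,
\]
is an isomorphism, in the spirit of \cite[Proposition 3]{HP1}. That $\Phi$ is well defined and $\mathbb{C}$-linear is routine: $\mathbb{C}\mathfrak{S}_{r}$-linearity of $f$ makes $v\otimes m\mapsto v\otimes f(m)$ compatible with the balancing relations $v\sigma\otimes m=v\otimes\sigma m$, and the resulting map is left $P_{n}$-linear since $P_{n}$ acts only on the left tensor factor. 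For injectivity I would use that, by the standardisation of $(n,r)$-diagrams in \S2.2, $V(n,r)$ is free as a right $\mathbb{C}\mathfrak{S}_{r}$-module, the middle permutation ranging freely over $\mathfrak{S}_{r}$. Fixing a free generator $v_{0}$, the submodule $v_{0}\,\mathbb{C}\mathfrak{S}_{r}$ is a direct summand, so $n\mapsto v_{0}\otimes n$ embeds $N$ as a direct summand of $V(n,r)\otimes_{\mathbb{C}\mathfrak{S}_{r}}N$; hence $\Phi(f)=0$ forces $v_{0}\otimes f(m)=0$ for all $m$, i.e.\ $f=0$.

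The substance is surjectivity, and it hinges on two diagrammatic facts, for which I would choose $v_{0}$ to be built from the half-diagram $h_{0}=\{\{1\},\dots,\{r-1\},\{r,r+1,\dots,n\}\}$ with its canonical labelling. First, $V(n,r)$ is \emph{cyclic} as a left $P_{n}$-module on $v_{0}$: any labelled half-diagram is obtained from $v_{0}$ by concatenating with a suitable partition diagram whose lower half-diagram is $h_{0}$, so $V(n,r)\otimes_{\mathbb{C}\mathfrak{S}_{r}}M$ is generated over $P_{n}$ by $\{v_{0}\otimes m:m\in M\}$. Second, let $e\in P_{n}$ be the partition diagram with propagating blocks $\{i,i'\}$ for $1\le i\le r-1$ and the single block $\{r,\dots,n,r',\dots,n'\}$; because the blocks of $h_{0}$ are separated, stacking $e$ on itself produces no interior component, so $e^{2}=e$ for \emph{every} $\delta\in\mathbb{C}$ (no division by $\delta$ is needed), while moreover $e\cdot v_{0}=v_{0}$ and $e\cdot V(n,r)=v_{0}\,\mathbb{C}\mathfrak{S}_{r}$.

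Granting these, given $\theta$ in the right-hand $\Hom$-space, commutativity of $\theta$ with the action of $e$ together with $e\cdot v_{0}=v_{0}$ forces $\theta(v_{0}\otimes m)\in e\cdot\bigl(V(n,r)\otimes_{\mathbb{C}\mathfrak{S}_{r}}N\bigr)=v_{0}\otimes N$; thus $\theta(v_{0}\otimes m)=v_{0}\otimes f(m)$ for a uniquely determined $f(m)\in N$, with no other components — this purity is exactly what the idempotent $e$ buys. The map $f\colon M\to N$ is $\mathbb{C}$-linear, and it is $\mathfrak{S}_{r}$-linear: choosing $g_{\sigma}\in P_{n}$ with $g_{\sigma}\cdot v_{0}=v_{0}\sigma$ (e.g.\ the diagram with upper and lower half-diagram $h_{0}$ and middle permutation $\sigma$), the balancing relation gives
\[
v_{0}\otimes f(\sigma m)=\theta\bigl((v_{0}\sigma)\otimes m\bigr)=g_{\sigma}\cdot\theta(v_{0}\otimes m)=(v_{0}\sigma)\otimes f(m)=v_{0}\otimes\sigma f(m),
\]
so $f(\sigma m)=\sigma f(m)$. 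Finally $\Phi(f)$ and $\theta$ are both $P_{n}$-linear and agree on the generating set $\{v_{0}\otimes m\}$, hence are equal; combined with injectivity this proves the proposition.

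I expect the real work to be the diagram bookkeeping behind the two facts of the second paragraph — carefully tracking which labelled blocks of a half-diagram persist through a concatenation, checking that the separated shape of $h_{0}$ makes $e$ a genuine idempotent for all $\delta$ (so the statement is not restricted to generic $\delta$), and checking that $e$ collapses $V(n,r)$ onto the single free $\mathbb{C}\mathfrak{S}_{r}$-summand $v_{0}\,\mathbb{C}\mathfrak{S}_{r}$ rather than onto something larger. The boundary cases $r=0$ and $r=n$ are immediate and can be treated separately.
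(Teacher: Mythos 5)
Your proof is correct and follows essentially the same route as the paper's: you choose the same canonical generator $u_0$, the same idempotent $e_r$ (with propagating blocks $\{i,i'\}$ for $i<r$ and the single block $\{r,\dots,n,r',\dots,n'\}$), and the same three facts (cyclicity of $V(n,r)$ over $P_n$, $e_r u_0 = u_0$, and $e_r V(n,r) = u_0\,\mathbb{C}\mathfrak S_r$), then transport $\theta$ to $f$ via $e_r$ and verify $\mathfrak S_r$-linearity with the balancing relation exactly as the paper does. Your explicit remark that $e_r^2=e_r$ holds for \emph{all} $\delta$ (no interior component is created) is a small but genuine sharpening — the paper inserts a gratuitous ``$\delta\neq 0$'' note that your argument shows is unnecessary for this particular idempotent — and your freeness-based injectivity argument makes explicit what the paper leaves implicit in its two-directional construction.
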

\begin{proof}
	Denote by $u_0 \in V(n, r)$ the $(n, r)$-partition diagram
	\begin{figure}[H]
		\begin{equation*}u_{0}= 
			\begin{array}{c}
			\begin{tikzpicture}
				\node at (-1.6,0.9) {\tiny\textbullet};
				\node at (-1.6,0) {\tiny\textbullet};
				\draw (-1.6,0.9) --(-1.6,0);
				
				\node at (-0.4,0.9) {\tiny\textbullet};
				\node at (-0.4,0) {\tiny\textbullet};
				
				\node at (-0.8,0.9) {$\cdots$};
				\node at (-0.8,0) {$\cdots$};
				\node at (-0.77,-0.2) {\scriptsize$r$};
				\draw (-0.4,0) --(-0.4,0.9);
				\node at (0,0.9) {\tiny\textbullet};
				\node at (0,0) {\tiny\textbullet};
				\node at (-1.2,0.9) {\tiny\textbullet};
				\node at (-1.2,0) {\tiny\textbullet};
				\draw (-1.2,0) --(-1.2,0.9);
				
				\node at (0.8,0.9) {\tiny\textbullet};

				
				\node at (1.6,0.9) {\tiny\textbullet};

				\node at (0.4,0.9) {\tiny\textbullet};
				\draw (0,0.9) .. controls (0.1,0.7) and (0.3,0.7) .. (0.4,0.9);
			
				\draw (0.4,0.9) .. controls (0.5,0.7) and (0.7,0.7) .. (0.8,0.9);

				\node at (1.2,0.9) {$\cdots$};
			
				\draw (0,0.9)--(0, 0);

			\end{tikzpicture}
			\end{array}
		\end{equation*}
		\vskip -0.3cm
	\end{figure}
	and define an idempotent $e_{r}$ to be
	\begin{equation*} e_{r}= 
		\begin{array}{c}
			\begin{tikzpicture}
					\node at (-1.6,0.9) {\tiny\textbullet};
						\node at (-1.6,0) {\tiny\textbullet};
							\draw (-1.6,0.9) --(-1.6,0);
							
				\node at (-0.4,0.9) {\tiny\textbullet};
				\node at (-0.4,0) {\tiny\textbullet};
			
				\node at (-0.8,0.9) {$\cdots$};
				\node at (-0.8,0) {$\cdots$};
				\node at (-0.9,1.1) {\scriptsize$r-1$};
				\draw (-0.4,0) --(-0.4,0.9);
				\node at (0,0.9) {\tiny\textbullet};
				\node at (0,0) {\tiny\textbullet};
				\node at (-1.2,0.9) {\tiny\textbullet};
				\node at (-1.2,0) {\tiny\textbullet};
				\draw (-1.2,0) --(-1.2,0.9);
		
				\node at (0.8,0.9) {\tiny\textbullet};
				\node at (0.8,0) {\tiny\textbullet};
			
			
				\node at (1.6,0.9) {\tiny\textbullet};
				\node at (1.6,0) {\tiny\textbullet};
				\node at (1.6,0) {\tiny\textbullet};
							\node at (0.4,0) {\tiny\textbullet};
								\node at (0.4,0.9) {\tiny\textbullet};
				\draw (0,0.9) .. controls (0.1,0.7) and (0.3,0.7) .. (0.4,0.9);
					\draw (0,0) .. controls (0.1,0.2) and (0.3,0.2) .. (0.4,0);
					
					\draw (0.4,0.9) .. controls (0.5,0.7) and (0.7,0.7) .. (0.8,0.9);
					\draw (0.4,0) .. controls (0.5,0.2) and (0.7,0.2) .. (0.8,0);
				
				\node at (1.2,0.9) {$\cdots$};
				\node at (1.2,0) {$\cdots$};

		\draw (0,0.9)--(0, 0);

			\end{tikzpicture}
		\end{array}
	\end{equation*}
	Here, note that $\delta \neq 0$ by assumption.
	
	Let $\phi \in\Hom_{P_{n}} (V(n, r)\o_{ \mathbb{C}\mathfrak{S}_{r}} M, V(n, r)\o_{ \mathbb{C}\mathfrak{S}_{r}} N)$. For arbitrary $m\in M$, assume that
	$$\phi( u_0 \o m)= \sum_{i\in I}   v_i\o n_i.$$ Apply the idempotent $e_{r}$
	on both sides of the equality above. Note that $$e_{r}( u_0\o m)=   u_0\o m.$$ Moreover, each $e_{r}\cdot v_i$
	is either zero or a multiple of $u_0$ and thus there exists certain $n\in N$ such that $$e_{r}.\sum v_i\otimes n_i= u_0\o n.$$
	Consequently, we obtain a map $\hat{\phi}(m)=n$ from $M$ to $N$ induced by $\phi$.
	
	Moreover, for each $v \in  V(n, r)$ there exists an element
	$a \in P_n$ such that $v = a u_0 $.  Therefore, for each $m\in M$  we have
$$
		\phi( v \o m)	=  \phi(a u_0\o m)=a\phi( u_0\o m)\\
		 = a( u_0\o\hat{\phi}(m)))=v\o \hat{\phi}(m).
$$
	
	To show  \(\mathfrak{S}_r\)-linearity of \(\hat{\phi}\): 
For any \( s \in \mathfrak{S}_r \), there exists \( p \in P_n \) such that \( u_0 s = p u_0 \) (achieved by permuting propagating lines via \( p \)). Then
\[
\phi(u_0 \otimes s m) = \phi(p u_0 \otimes m) = p \cdot \phi(u_0 \otimes m) = p \cdot (u_0 \otimes \hat{\phi}(m)) = u_0 s \otimes \hat{\phi}(m) = u_0 \otimes s \hat{\phi}(m),  
\]  
which implies \( \hat{\phi}(s m) = s \hat{\phi}(m) \). Thus, \( \hat{\phi} \in \Hom_{\mathfrak{S}_r}(M, N) \).

	Conversely, for any $v\in V(n,r), m \in M$, and   homomorphism $\hat{\phi}\in \Hom_{\mathbb{C}\mathfrak{S}_{r}} (M, N)$,
	define $\phi( v\o m)= v \o \hat{\phi}(m)$.
To verify \( \phi \) is a \( P_n \)-module homomorphism, observe that for any \( d \in P_n \):  
\[
\phi(d \cdot (v \otimes m)) = \phi(d v \otimes m) = dv\o \hat{\phi}(m)= d \cdot \phi(v \otimes m).  
\]

\end{proof}

\textbf{Assumption.} We assume throughout the paper that $\delta \not\in \{0,1,\ldots, 2n-2\}$, saying that $P_n(\delta)$ is semisimple.

\section{Multiplicities}
In this section, we specialize the formula of Bowman, De Visscher, and Orellana to the case of one-part partitions and reduce the multiplicity computation to solving the Diophantine system (E1).

\subsection{The Restriction of a standard module}
The Young subgroup \(\mathfrak{S}_m \times \mathfrak{S}_n \subset \mathfrak{S}_{m+n}\) (also denoted \(\mathfrak{S}_{m,n}\)) is the subgroup that independently permutes the first \(m\) and the last \(n\) letters.  For $\lambda \vdash m$ and $\mu \vdash n$, the outer tensor product $\Sp(\lambda) \boxtimes \Sp(\mu)$ is defined as the $\mathfrak{S}_m \times \mathfrak{S}_n$-module with the action $(\sigma, \tau) \cdot (v \otimes w) = (\sigma \cdot v) \otimes (\tau \cdot w)$. 

When  restricting a Specht module for $\mathfrak S_{m+n}$ to this Young subgroup $\mathfrak{S}_m \times \mathfrak{S}_n$, it breaks into outer tensor products according to the classical Littlewood–Richardson rule:

\begin{thm}[Littlewood–Richardson Rule]
	Let $\lambda \vdash m$, $\mu \vdash n$, and $\nu \vdash m+n$. The restriction of the Specht module $\Sp(\nu)$ to $\mathfrak{S}_m \times \mathfrak{S}_n$ decomposes as  
	\[
	\Sp(\nu) \!\!\downarrow_{\mathfrak{S}_m \times \mathfrak{S}_n} \cong \bigoplus_{\lambda, \mu} c^\nu_{\lambda, \mu} \, \Sp(\lambda) \boxtimes \Sp(\mu),
	\]  
	where the multiplicities $c^\nu_{\lambda, \mu}$ are Littlewood–Richardson coefficients (L-R coefficients).  
\end{thm}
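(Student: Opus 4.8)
The plan is to reduce the statement to the corresponding identity in the ring $\Lambda$ of symmetric functions via the Frobenius characteristic map, and then to invoke the combinatorial description of Littlewood--Richardson coefficients in terms of skew tableaux.

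First I would pass to characters. Let $\mathrm{ch}\colon\bigoplus_{k\ge 0}R(\mathbb{C}\mathfrak{S}_k)\to\Lambda$ be the Frobenius characteristic, the isomorphism from the direct sum of the representation rings of the symmetric groups onto $\Lambda$ that sends the class of the Specht module $\Sp(\nu)$ to the Schur function $s_\nu$ and converts the induction product into ordinary multiplication: for a $\mathfrak{S}_m$-module $U$ and a $\mathfrak{S}_n$-module $W$,
\[
\mathrm{ch}\bigl(\mathrm{Ind}_{\mathfrak{S}_m\times\mathfrak{S}_n}^{\mathfrak{S}_{m+n}}(U\boxtimes W)\bigr)=\mathrm{ch}(U)\,\mathrm{ch}(W).
\]
Define the structure constants $c^\nu_{\lambda,\mu}$ by $s_\lambda\,s_\mu=\sum_{\nu\vdash m+n}c^\nu_{\lambda,\mu}\,s_\nu$. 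Taking $U=\Sp(\lambda)$, $W=\Sp(\mu)$ and comparing coefficients of $s_\nu$ shows that $c^\nu_{\lambda,\mu}$ is the multiplicity of $\Sp(\nu)$ in $\mathrm{Ind}_{\mathfrak{S}_m\times\mathfrak{S}_n}^{\mathfrak{S}_{m+n}}(\Sp(\lambda)\boxtimes\Sp(\mu))$. Since $\mathbb{C}\mathfrak{S}_{m+n}$ is semisimple, Frobenius reciprocity gives
\[
\bigl[\mathrm{Ind}_{\mathfrak{S}_m\times\mathfrak{S}_n}^{\mathfrak{S}_{m+n}}(\Sp(\lambda)\boxtimes\Sp(\mu)):\Sp(\nu)\bigr]=\bigl[\mathrm{Res}^{\mathfrak{S}_{m+n}}_{\mathfrak{S}_m\times\mathfrak{S}_n}\Sp(\nu):\Sp(\lambda)\boxtimes\Sp(\mu)\bigr],
\]
so $\Sp(\nu)\!\downarrow_{\mathfrak{S}_m\times\mathfrak{S}_n}\cong\bigoplus_{\lambda,\mu}c^\nu_{\lambda,\mu}\,\Sp(\lambda)\boxtimes\Sp(\mu)$, which is the asserted decomposition.

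It then remains to identify $c^\nu_{\lambda,\mu}$ with the number of Littlewood--Richardson skew tableaux of shape $\nu/\lambda$ and content $\mu$, so that ``Littlewood--Richardson coefficient'' carries its usual combinatorial meaning used elsewhere in the paper. I would establish this by expanding $s_\lambda\,s_\mu$ in the monomial basis: writing $s_\mu=\sum_T x^T$ over semistandard tableaux $T$ of shape $\mu$ and using the combinatorial definition of Schur functions, one reduces --- with the Pieri rule as the base case and induction on $\ell(\mu)$, or else through the Robinson--Schensted--Knuth correspondence together with a Bender--Knuth / jeu-de-taquin symmetrisation argument --- to the claim that the coefficient of $s_\nu$ in $s_\lambda\,s_\mu$ equals the number of fillings of $\nu/\lambda$ that are semistandard and whose reverse reading word is a lattice (Yamanouchi) word. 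This is the combinatorial core, and it is precisely the content of the Littlewood--Richardson rule as recorded in \cite{jk}.

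The representation-theoretic part --- the Frobenius characteristic together with reciprocity --- is formal, so the single real obstacle is the last step: tracking the cancellations that occur when the product $s_\lambda\,s_\mu$ is collected back into the Schur basis, which is exactly what pins down the lattice-word condition on the tableaux. The cleanest self-contained route, and the one I would take to make the count transparent, is to run RSK on pairs consisting of a semistandard tableau of shape $\lambda$ and one of shape $\mu$ and follow how the recording datum assembles into a single LR tableau of shape $\nu/\lambda$ and content $\mu$; bijectivity of RSK then yields both the non-negativity of $c^\nu_{\lambda,\mu}$ and the exact enumeration.
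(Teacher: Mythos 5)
The paper does not actually prove this theorem: it records the Littlewood--Richardson Rule as a known classical fact, with the reader referred to \cite{jk} for background, and immediately proceeds to use it. So there is no ``paper's own proof'' to compare against; you have supplied a proof sketch where the paper supplies only a citation.

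Your sketch follows the standard route and its logical skeleton is sound. The reduction to the symmetric-function identity $s_\lambda s_\mu = \sum_\nu c^\nu_{\lambda,\mu}\, s_\nu$ via the Frobenius characteristic is correct, as is the use of Frobenius reciprocity together with semisimplicity of $\mathbb{C}\mathfrak{S}_{m+n}$ to pass between induction multiplicities and restriction multiplicities. You also correctly isolate the genuine content: the identification of the Schur-basis structure constants with the count of Littlewood--Richardson skew tableaux of shape $\nu/\lambda$ and content $\mu$. That last step is only gestured at (RSK together with a jeu-de-taquin or Bender--Knuth symmetrisation argument), and this is where essentially all the work lies --- in particular, the assertion that cancellations in $s_\lambda s_\mu$ collect exactly into the lattice-word condition is the entire theorem and deserves more than a one-sentence pointer if one wanted a self-contained proof. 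For a paper that, like this one, simply invokes the rule from \cite{jk}, your level of detail is more than adequate; if this were to replace the citation, the combinatorial identification would need to be carried out in full, and it would be cleaner to fix one of the two routes you mention (RSK with bumping paths, or Pieri plus jeu-de-taquin) rather than offer both.
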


Equivalently, the Littlewood–Richardson coefficient $c_{\lambda,\mu}^{\nu}$ is the multiplicity with which the outer tensor product $\Sp(\lambda)\boxtimes\Sp(\mu)$ appears as a composition factor of $\Sp(\nu)$; this is recorded by the bracket
$$
\bigl[\Sp_{m+n}(\nu)\!\downarrow_{\mathfrak{S}_m\times\mathfrak{S}_n} : \Sp_m(\lambda)\boxtimes\Sp_n(\mu)\bigr].
$$
An analogous formula for induction is obtained via Frobenius reciprocity.

In the partition algebra case, an analogous restriction theorem was obtained by Bowman, De Visscher and Orellana in \cite{BVC}. Let $\Delta_m(\lambda)$ and $\Delta_n(\mu)$ be standard modules of partition algebras $P_m$ and $P_n$, respectively, and $\Delta_{m+n}(\nu)$ the standard module of $P_{m+n}$. The algebra $P_m \otimes P_n$ is the tensor product of partition  algebras with component-wise multiplication $(a \otimes b)(a' \otimes b') = (aa') \otimes (bb')$.  
Considering the action of the algebra $P_m \otimes P_n$ on the outer tensor product $\Delta_m(\lambda)\boxtimes \Delta_n(\mu)$, the restriction structure is described by the three-part Littlewood-Richardson coefficient and Kronecker coefficient:

\begin{thm}[\cite{BVC}, Theorem 4.3] \label{par-mul}
	Let $\nu\vdash m+n-l$, $\lambda\vdash m-l_m$ and $\mu\vdash n-l_n$ for some non-negative integers $l, l_m, l_n$. Then $\Delta_m(\nu)\!\!\downarrow_{P_r\otimes P_s}$ has  a filtration by standard modules with multiplicities given by
	$$[\Delta_{m+n}(\nu)\!\downarrow_{P_m\otimes P_n} \, : \, \Delta_m(\lambda)\boxtimes \Delta_n(\mu)]=\sum_{\begin{subarray}{c} l_1, l_2 \\ l_1+2l_2=l-l_m-l_n \end{subarray}} \sum_{\begin{subarray}{c} \alpha\vdash m-l_m-l_1-l_2 \\ \beta\vdash n-l_n-l_1-l_2 \\ \pi,\rho,\sigma\vdash l_1 \\ \gamma\vdash l_2 \end{subarray}} c_{\alpha ,\beta ,\pi}^\nu c_{\alpha , \rho ,\gamma}^\lambda c_{\gamma ,\sigma , \beta}^{\mu}g_{\rho,\sigma}^\pi .$$
\end{thm}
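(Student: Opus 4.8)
The plan is to use the semisimplicity Assumption to turn the filtration statement into a $\Hom$-space computation, then the half-diagram realization of standard modules to reduce everything to representations of symmetric groups, exactly in the spirit of Proposition~\ref{aalc}. Write $r=m+n-l$, so that by \S2.3 one has $\Delta_{m+n}(\nu)\cong V(m+n,r)\otimes_{\mathbb{C}\mathfrak{S}_r}\Sp(\nu)$, and likewise $\Delta_m(\lambda)\cong V(m,m-l_m)\otimes_{\mathbb{C}\mathfrak{S}_{m-l_m}}\Sp(\lambda)$ and $\Delta_n(\mu)\cong V(n,n-l_n)\otimes_{\mathbb{C}\mathfrak{S}_{n-l_n}}\Sp(\mu)$. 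Since $\delta$ is such that $P_m$, $P_n$, $P_{m+n}$ are all semisimple, $P_m\otimes P_n$ is semisimple, its standard modules are its simples, and a standard-module filtration of $\Delta_{m+n}(\nu)\!\!\downarrow_{P_m\otimes P_n}$ has multiplicities equal to composition multiplicities, hence to
\[
\dim\Hom_{P_m\otimes P_n}\!\bigl(\Delta_m(\lambda)\boxtimes\Delta_n(\mu),\ \Delta_{m+n}(\nu)\!\!\downarrow_{P_m\otimes P_n}\bigr).
\]
It is this dimension that I would compute.

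The key structural step is to decompose the restricted $(P_m\otimes P_n,\mathbb{C}\mathfrak{S}_r)$-bimodule $V(m+n,r)$ as a direct sum over \emph{combinatorial types} of $(m+n,r)$-partition diagrams. A type should record how each block meets the first $m$ versus the last $n$ top dots: the number $l_m$ (resp.\ $l_n$) of propagating blocks whose top dots all lie in the left (resp.\ right) half and that become non-propagating after restriction, the number $l_1$ of ``shared'' middle strands passing from a block on the left to a block on the right, and a ``doubling'' parameter $l_2$, subject to the balance $l_1+2l_2=l-l_m-l_n$ forced by counting dots. On each type $T$, the bimodule is induced from a product of group algebras of smaller symmetric groups; concretely $V(m+n,r)\!\!\downarrow$ should split as $\bigoplus_T\bigl(V(m,m-l_m)\otimes_{\mathbb{C}\mathfrak{S}_{\bullet}}A_T\bigr)\boxtimes\bigl(V(n,n-l_n)\otimes_{\mathbb{C}\mathfrak{S}_{\bullet}}B_T\bigr)$, with the $A_T,B_T$ explicit $\mathfrak{S}$-bimodules. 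Applying $-\otimes_{\mathbb{C}\mathfrak{S}_r}\Sp(\nu)$, then $\Hom_{P_m\otimes P_n}(\Delta_m(\lambda)\boxtimes\Delta_n(\mu),-)$, and invoking Proposition~\ref{aalc} on the $P_m$- and $P_n$-tensor factors to cancel the partition-algebra parts, collapses each summand to a $\Hom$-space over a product of symmetric group algebras.

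What remains is to evaluate those symmetric-group $\Hom$-spaces by Frobenius reciprocity, Mackey, and the Littlewood--Richardson rule of \S3.1. Restricting $\Sp(\nu)$ along a chain of Young subgroups produces the factor $c^\nu_{\alpha,\beta,\pi}$ with $\alpha\vdash m-l_m-l_1-l_2$, $\beta\vdash n-l_n-l_1-l_2$, $\pi\vdash l_1$; restricting $\Sp(\lambda)$ and $\Sp(\mu)$ likewise produces $c^\lambda_{\alpha,\rho,\gamma}$ and $c^\mu_{\gamma,\sigma,\beta}$ with $\rho,\sigma\vdash l_1$ and $\gamma\vdash l_2$ (the part $\gamma$ being the one paired between $\lambda$ and $\mu$ through the doubled blocks); and the pairing over the $l_1$ strands that a single shared block feeds simultaneously into the left and right halves is governed by a diagonal embedding $\mathfrak{S}_{l_1}\hookrightarrow\mathfrak{S}_{l_1}\times\mathfrak{S}_{l_1}$, whose contribution is $\dim\Hom_{\mathfrak{S}_{l_1}}\!\bigl(\Sp(\rho)\otimes\Sp(\sigma),\Sp(\pi)\bigr)=g^\pi_{\rho,\sigma}$. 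Summing over all types reproduces the stated sum.

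I expect the main obstacle to be the second step: pinning down precisely the indexing set of combinatorial types and verifying that the stabilizer of each type is exactly the advertised product of symmetric groups, so that the ranges for $\alpha,\beta,\pi,\rho,\sigma,\gamma$ fall out of honest dot-counting and so that the ``double contact'' configuration is forced to contribute a Kronecker coefficient rather than a further Littlewood--Richardson coefficient; one must also keep track carefully of the balanced tensor products when stripping off the partition-algebra factors via Proposition~\ref{aalc}. A useful independent cross-check is Schur--Weyl duality: for $\delta=N$ with $N$ large, $P_k(N)\cong\End_{\mathfrak{S}_N}\!\bigl((\mathbb{C}^N)^{\otimes k}\bigr)$ and the multiplicity in question equals the stable Kronecker coefficient $\overline{g}^{\,\nu}_{\lambda,\mu}=g^{\,\nu[N]}_{\lambda[N],\mu[N]}$; the asserted identity is then the known symmetric-function expansion of $\overline{g}$, and polynomiality in $\delta$ upgrades it to all semisimple $\delta$.
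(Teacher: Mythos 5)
This statement is not proved in the paper: it is imported verbatim from Bowman--De Visscher--Orellana as \cite[Theorem~4.3]{BVC}, and the paper simply cites it. Your proposed strategy, however, \emph{is} essentially the strategy of the original proof, and also the template that Section~4 of this paper adapts to half-diagrams: decompose $V(m+n,r)$ as a $(P_m\otimes P_n,\mathbb{C}\mathfrak{S}_r)$-bimodule by recording how each block meets the $m$ left dots versus the $n$ right dots, reduce to symmetric-group computations through the inflation structure (using Proposition~\ref{aalc} to strip off the $V(m,\cdot)$ and $V(n,\cdot)$ factors), and then evaluate symmetric-group $\Hom$-spaces by the Littlewood--Richardson rule and a Kronecker coefficient coming from a diagonal copy of $\mathfrak{S}_{l_1}$.

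One structural point needs correcting. You assert that $V(m+n,r)\!\downarrow_{P_m\otimes P_n}$ \emph{splits as a direct sum} over combinatorial types. The span of diagrams of a fixed type is not a $P_m\otimes P_n$-submodule: the generators $e^{L/R}_{i,j}$, $p^{L/R}_i$, $s^{L/R}_{i,j}$ can move a diagram to a type of strictly lower index but never raise it (this is precisely the content of Lemma~\ref{order} in the half-diagram setting). The correct structure is therefore a filtration by type, ordered lexicographically on $(l_2; l_1, \dots)$ as in Definition~\ref{wall} and chain~$(*)$, with each type appearing as a \emph{subquotient} of the shape you describe. Under the Assumption of semisimplicity this filtration of course splits, so your argument recovers the multiplicity formula; but it is the filtration that the theorem asserts (``has a filtration by standard modules''), it is what survives outside the semisimple range, and carrying it out honestly is exactly where the dot-counting you flag as the main obstacle takes place. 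Your identification of the source of $g^{\pi}_{\rho,\sigma}$ --- the $l_1$ shared strands feeding a single block of $\Sp(\pi)$ simultaneously into the $\rho$ side of $\lambda$ and the $\sigma$ side of $\mu$, controlled by $\mathfrak{S}_{l_1}\hookrightarrow\mathfrak{S}_{l_1}\times\mathfrak{S}_{l_1}$ --- is the correct mechanism. The Schur--Weyl cross-check against stable Kronecker coefficients is a good sanity test but, as you say, not a substitute for the combinatorial bookkeeping.
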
 

Here, $c^\nu_{\lambda,\mu,\eta}=\sum_{\xi}c^\xi_{\lambda,\mu} c^\nu_{\xi,\eta}$ is the three-part Littlewood-Richardson coefficient,  defined as the sum of multiplicities of $\Sp(\lambda)\boxtimes\Sp(\mu)\boxtimes\Sp(\eta)$ in $\Sp(\nu)\!\! \downarrow_{\mathfrak{S}_{r_1} \times \mathfrak{S}_{r_2}\times\mathfrak{S}_{r_3} }^{\mathfrak{S}_{r_1+r_2+r_3}}$, while the Kronecker coefficient $g_{\lambda , \mu}^\nu$ corresponds to the multiplicity in the direct sum decomposition $\Sp(\lambda)\otimes \Sp(\mu) = \bigoplus_{\nu} g_{\lambda,\mu}^\nu \Sp(\nu)$.

\subsection{L-R and Kronecker Coefficients for One-Part Partitions}
First, we recall how to compute Littlewood–Richardson coefficients using the combinatorial algorithm of Young tableaux; see \cite[Section 2.8.13]{jk} for details.

The Littlewood–Richardson coefficient $c^\nu_{\lambda,\mu}$ is zero unless $\lambda \subseteq \nu$. If $\lambda \subseteq \nu$, it can be calculated as follows:
For each node $(i,j)$ of $\mu$, take a symbol $u_{ij}$.  Begin with the diagram $\lambda$ and:
\begin{enumerate}[leftmargin=*,itemsep=0.1em]
	\item Add to it all symbols $u_{1j}$ (corresponding to the first row of nodes of $\mu$) in such a way as to produce the diagram of a partition and to satisfy (3).
	\item Next add all symbols $u_{2j}$ (corresponding to the second row of nodes of $\mu$) following the same rules.  Continue this process with all rows of $\mu$.
	\item The added symbols must satisfy: $(a)$ for all $i$, if $y <j$, $u_{iy}$ is in a later column than $u_{ij}$; and $(b)$ for all $j$, if $x<i$ $u_{xj}$ is in an earlier row than $u_{ij}$.
\end{enumerate}

For example, take $\lambda = (2)$ and $\mu = (2,1)$.  Applying the algorithm above produces exactly the cases shown in Fig.\ref{fig:LR}, with each shape appearing exactly once.  Hence,

$$
c^{(4,1)}_{(2),(2,1)} \;=\;
c^{(3,2)}_{(2),(2,1)} \;=\;
c^{(3,1,1)}_{(2),(2,1)} \;=\;
c^{(2,2,1)}_{(2),(2,1)} \;=\; 1,
\quad
\text{and all other } c^{\nu}_{(2),(2,1)} = 0.
$$

\begin{figure}[ht]
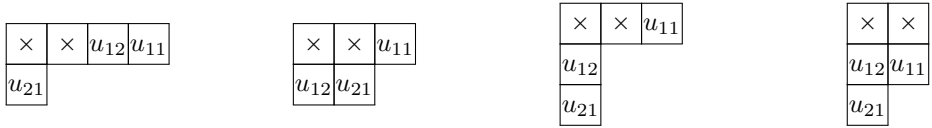

	\centering
	\begin{minipage}{0.24\textwidth}
		\centering
		\begin{ytableau}
			\times & \times & u_{12} & u_{11}\\
			u_{21}
		\end{ytableau}
	\end{minipage}
	\hfill
	\begin{minipage}{0.24\textwidth}
		\centering
		\begin{ytableau}
				\times  & 	\times  & u_{11} \\
			u_{12}  & u_{21} 
		\end{ytableau}
	\end{minipage}
	\hfill
	\begin{minipage}{0.24\textwidth}
		\centering
		\begin{ytableau}
				\times  & 	\times  &u_{11} \\
			u_{12}  \\
			u_{21} 
		\end{ytableau}
	\end{minipage}
	\hfill
	\begin{minipage}{0.24\textwidth}
		\centering
		\begin{ytableau}
				\times  & 	\times  \\
			u_{12}  & u_{11} \\
			u_{21} 
		\end{ytableau}
	\end{minipage}
	\caption{An example for $\lambda=(2)$,$\mu=(2,1)$}
	\label{fig:LR}
\end{figure}

\begin{lemma}\label{two-part}
	Let $\nu=(r)$ be a one-part partition.  Then the Littlewood–Richardson coefficient $c^{(r)}_{\lambda,\mu}$ satisfies
	\[
	c^{(r)}_{\lambda,\mu}=
	\begin{cases}
		1,&\text{if } \lambda=(k),\ \mu=(r-k) \text{ for some } 0\leq k\leq r,\\
		0,&\text{otherwise}.
	\end{cases}
	\]
\end{lemma}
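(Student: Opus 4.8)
The plan is to run the Littlewood--Richardson tableau algorithm recalled just above with target shape $\nu=(r)$ and read off which configurations survive.

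First I would record the constraints forced on any nonzero term. The algorithm builds $\nu$ from $\lambda$ by adjoining the $|\mu|$ nodes of $\mu$, so $c^{(r)}_{\lambda,\mu}\neq 0$ already requires $\lambda\subseteq(r)$ and $|\lambda|+|\mu|=r$. Since $(r)$ has a single row, $\lambda\subseteq(r)$ forces $\lambda=(k)$ for some $0\le k\le r$ (with $k=0$ meaning $\lambda=\varnothing$), and then $|\mu|=r-k$. It thus remains to determine, for each $\mu$ with $|\mu|=r-k$, whether $c^{(r)}_{(k),\mu}$ equals $1$ or $0$.

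Next I would exploit that the final diagram $(r)$ has only one row, so \emph{every} adjoined symbol $u_{ij}$ must end up in row $1$. Suppose $\ell(\mu)\ge 2$ and pick a column index $j$ with $(2,j)\in\mu$; since $\mu$ is a partition, $(1,j)\in\mu$ as well. Then both $u_{1j}$ and $u_{2j}$ are placed, and both lie in row $1$, contradicting rule $(b)$, which requires $u_{1j}$ to sit in a strictly earlier row than $u_{2j}$. Hence $\ell(\mu)\le 1$, i.e.\ $\mu=(r-k)$; for every other $\mu$ of size $r-k$ we get $c^{(r)}_{(k),\mu}=0$.

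Finally I would check that $\lambda=(k)$, $\mu=(r-k)$ yields exactly one valid filling, hence coefficient $1$. All symbols $u_{1,1},\dots,u_{1,r-k}$ must occupy row $1$; rule $(b)$ is then vacuous, rule $(a)$ pins down their left-to-right order as $u_{1,r-k},\dots,u_{1,2},u_{1,1}$ in columns $k+1,\dots,r$, and every intermediate diagram is a single row, hence a partition, so the partition-shape requirements add no further constraint. This is the unique configuration, so $c^{(r)}_{(k),(r-k)}=1$. I do not expect a genuine obstacle here: the only points to watch are the degenerate cases $k=0$ and $k=r$ (immediate) and the fact that each partial diagram stays a partition (automatic, since it is always a single row). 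Alternatively one could invoke the single-row case of Pieri's rule via $\langle s_{(k)}s_{\mu},s_{(r)}\rangle=\langle s_{\mu},s_{(r)/(k)}\rangle=\langle s_{\mu},s_{(r-k)}\rangle$, which is $1$ exactly when $\mu=(r-k)$, but I would keep the self-contained combinatorial argument matching the setup of the paper.
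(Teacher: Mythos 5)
Your proof is correct and follows essentially the same route as the paper's: run the Littlewood--Richardson tableau algorithm with target shape $(r)$ and observe that the single-row constraint forces both $\lambda$ and $\mu$ to be one-part. You are actually a bit more careful than the paper at the point where $\mu$ is forced to be one-row — you isolate the explicit violation of rule (b) when $\ell(\mu)\ge 2$, whereas the paper's phrase ``would introduce additional rows in the composite diagram'' glosses over exactly this column-strictness argument — but this is a refinement of the same idea, not a different method.
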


\begin{proof}
By the Littlewood–Richardson rule, the composite Young tableau of $\lambda$ and $\mu$ must satisfy row non-decreasing and column increasing conditions. If the Young diagram of $\lambda$ or $\mu$ has multiple rows, their symbol fillings would introduce additional rows in the composite diagram, contradicting the single-row structure of the Young diagram of shape $\nu = (r)$. Thus, $\lambda$ and $\mu$ must both be one-part partitions, with $\lambda = (k)$ and $\mu = (r - k)$ satisfying $k + (r - k) = r$. Appending symbols of $\mu$ to the right of $\lambda$ yields the unique valid filling, giving $c^{(r)}_{\lambda, \mu} = 1$. Otherwise, the coefficient vanishes.
\end{proof}

\begin{lemma}\label{three-part}
	Let $\nu = (r)$ be a one-part partition. The three-part Littlewood–Richardson coefficient $c^{(r)}_{\lambda,\mu,\eta}$ is non-zero if and only if there exist non-negative integers $k, m \geq 0$ satisfying $k + m \leq r$, such that:
	\[
	\lambda = (k),\quad \mu = (m),\quad \eta = (r - k - m).
	\]
	In this case, $c^{(r)}_{\lambda,\mu,\eta} = 1$; otherwise, the coefficient vanishes.

\end{lemma}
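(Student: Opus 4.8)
The plan is to reduce the three-part coefficient to two successive applications of Lemma~\ref{two-part}, using its defining recursion $c^{(r)}_{\lambda,\mu,\eta}=\sum_{\xi}c^{\xi}_{\lambda,\mu}\,c^{(r)}_{\xi,\eta}$, where $\xi$ ranges over partitions of $|\lambda|+|\mu|$ (so that the sum is finite, since $c^{\xi}_{\lambda,\mu}=0$ unless $|\xi|=|\lambda|+|\mu|$). First I would peel off the outer factor: by Lemma~\ref{two-part}, $c^{(r)}_{\xi,\eta}$ is nonzero only when both $\xi$ and $\eta$ are one-part partitions with $|\xi|+|\eta|=r$, in which case it equals $1$. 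Hence any term contributing to the sum forces $\eta=(e)$ for some $0\le e\le r$ and $\xi=(r-e)$; in particular, if $\eta$ is not a single-row partition of size at most $r$, the whole sum is zero. When $\eta=(e)$, the sum collapses to the single surviving term, giving $c^{(r)}_{\lambda,\mu,\eta}=c^{(r-e)}_{\lambda,\mu}$.

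Next I would apply Lemma~\ref{two-part} a second time to $c^{(r-e)}_{\lambda,\mu}$: this is nonzero (and then equal to $1$) precisely when $\lambda=(k)$ and $\mu=(m)$ with $k+m=r-e$. Plugging in, we get $\lambda=(k)$, $\mu=(m)$, $\eta=(e)=(r-k-m)$ with $k,m\ge 0$ and $k+m=r-e\le r$, which is exactly the asserted characterization, and the value is $1\cdot 1=1$. Conversely, any triple of this shape arises in this way (take $e=r-k-m$) and therefore yields a nonzero—hence unit—coefficient, so the forward and backward directions come out together.

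The only real care needed is bookkeeping of the degenerate cases: the empty partition must be admitted as the one-part partition $(0)$ (it occurs when $e=0$, or $k=0$, or $m=0$, or $k+m=r$), and the trivial case $r=0$ should be noted separately, where the only surviving triple is $\lambda=\mu=\eta=\emptyset$ with coefficient $1$. An alternative, self-contained route—mirroring the proof of Lemma~\ref{two-part}—is to argue directly that a valid triple Littlewood–Richardson filling fitting inside the single row $(r)$ cannot create a second row, forcing $\lambda,\mu,\eta$ all to be single rows with a unique filling; but the recursive argument above is shorter, and that is the one I would present.
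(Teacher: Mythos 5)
Your proof is correct and follows essentially the same route as the paper: expand the defining sum $c^{(r)}_{\lambda,\mu,\eta}=\sum_{\xi}c^{\xi}_{\lambda,\mu}c^{(r)}_{\xi,\eta}$ and apply Lemma~\ref{two-part} twice to force all indices to be one-part and collapse the sum to a single term of value $1$. If anything, your bookkeeping is slightly cleaner than the paper's: you apply Lemma~\ref{two-part} only to the outer factor $c^{(r)}_{\xi,\eta}$ (where $\nu=(r)$ is genuinely one-part) to eliminate non-one-part $\xi$, whereas the paper's phrasing loosely invokes the lemma as if it also directly controlled $c^{\xi}_{\lambda,\mu}$ for arbitrary $\xi$.
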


\begin{proof}
The three-part coefficient is defined as $c^{(r)}_{\lambda, \mu, \eta} = \sum_{\xi} c^{\xi}_{\lambda, \mu} c^{(r)}_{\xi, \eta}$. By Lemma 1, if $\xi$ is not one-part, both $c^{\xi}_{\lambda, \mu}$ and $c^{(r)}_{\xi, \eta}$ vanish. The only non-zero term corresponds to $\xi = (k + m)$, where $\lambda = (k)$ and $\mu = (m)$ are one-part. Further, $c^{(r)}_{(k + m), \eta} \neq 0$ requires $\eta = (r - k - m)$ to be one-part. Each factor contributes a coefficient of 1, hence the total is 1.
\end{proof}

\begin{lemma} Let $\lambda = \mu = \nu = (n)$. Then the Kronecker coefficient satisfies  
	\[
	g_{(n), (n)}^{(n)} = 1.
	\]
\end{lemma}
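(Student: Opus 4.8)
The plan is to identify $\Sp((n))$ with the trivial $\mathfrak{S}_n$-module and to observe that the Kronecker (inner) tensor product of the trivial module with itself is again the trivial module, so that the decomposition $\Sp((n))\otimes\Sp((n))=\bigoplus_\nu g^\nu_{(n),(n)}\Sp(\nu)$ collapses to the single summand $\Sp((n))$, which forces $g^{(n)}_{(n),(n)}=1$.

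Concretely, I would first recall, as already noted in Section 2.3, that the Specht module indexed by the one-part partition $(n)$ is the trivial module $\mathbbm{1}_n$: a one-dimensional space on which every $\sigma\in\mathfrak{S}_n$ acts as the identity. For the inner tensor product, the action of $\sigma$ on a decomposable tensor $v\otimes w\in\mathbbm{1}_n\otimes\mathbbm{1}_n$ is $\sigma v\otimes\sigma w=v\otimes w$; hence $\mathbbm{1}_n\otimes\mathbbm{1}_n$ is again a one-dimensional trivial module, i.e.\ $\Sp((n))\otimes\Sp((n))\cong\Sp((n))$. Matching this with $\bigoplus_\nu g^\nu_{(n),(n)}\Sp(\nu)$ and using linear independence of irreducible characters (or simply a dimension count, since both sides have dimension $1$ and the left side is the trivial module), one reads off $g^{(n)}_{(n),(n)}=1$ and $g^\nu_{(n),(n)}=0$ for all $\nu\neq(n)$.

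Equivalently, and perhaps more cleanly for the write-up, one can argue directly with characters: the irreducible character $\chi^{(n)}$ is the constant function $1$ on $\mathfrak{S}_n$, so
\[
g^{(n)}_{(n),(n)}=\bigl\langle \chi^{(n)}\chi^{(n)},\,\chi^{(n)}\bigr\rangle=\frac{1}{n!}\sum_{\sigma\in\mathfrak{S}_n}1=1.
\]
There is essentially no obstacle here; the only point requiring care is not to confuse the inner (Kronecker) tensor product used in the definition of $g^\nu_{\lambda,\mu}$ with the outer tensor product $\boxtimes$ appearing elsewhere in the paper. Once the correct product is used, the computation is immediate, so in the final version I would state the result in one or two lines via the character argument.
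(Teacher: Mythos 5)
Your argument is correct and coincides with the paper's proof: both identify $\Sp((n))$ with the trivial module, note that the inner tensor product of the trivial module with itself is again trivial, and read off $g^{(n)}_{(n),(n)}=1$. The supplementary character computation you give is a fine alternative phrasing of the same observation.
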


\begin{proof}
The one-part  partition $(n)$ corresponds to the trivial module of $\mathfrak{S}_n$, where every group element acts as the identity. The tensor product $\Sp((n)) \otimes \Sp((n))$ remains trivial, decomposing into a single copy of $\Sp((n))$. Thus, $g^{(n)}_{(n), (n)} = 1$.
\end{proof}

\subsection{Diophantine equation}
When the indices in Theorem \ref{par-mul} take the one-part partitions $\nu=(r)$, $\lambda=(p)$, and $\mu=(q)$, we define  
$$E_{p, q}^{r} = \left[\Delta_{m+n}\big((r)\big)\!\!\downarrow_{P_m \otimes P_n} \, : \, \Delta_m\big((p)\big) \boxtimes \Delta_n\big((q)\big)\right].$$

\begin{prop}
	The multiplicity $E_{p, q}^{r}$ equals the number of non-negative integer solutions to the system (E1):
	\[
	\left\{
	\begin{array}{ll}
		T+ L + R = r, \\
		L+ T + U= p, \\
		R + U+T  = q,
	\end{array} 
	\right.\eqno(\mathrm{E1})
	\]
\end{prop}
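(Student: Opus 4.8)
The plan is to substitute the one-part partitions $\nu=(r)$, $\lambda=(p)$, $\mu=(q)$ directly into the Bowman–De Visscher–Orellana formula of Theorem \ref{par-mul} and then collapse the resulting double sum using Lemmas \ref{two-part} and \ref{three-part} together with the lemma computing $g^{(n)}_{(n),(n)}$. First I would fix the outer parameters: writing $l=m+n-r$, $l_m=m-p$, $l_n=n-q$, the constraint $l_1+2l_2=l-l_m-l_n$ becomes $l_1+2l_2=p+q-r$, and the inner partitions acquire the sizes $\alpha\vdash p-l_1-l_2$, $\beta\vdash q-l_1-l_2$, $\pi,\rho,\sigma\vdash l_1$, $\gamma\vdash l_2$.

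Next I would evaluate each factor of the summand $c^{\nu}_{\alpha,\beta,\pi}\,c^{\lambda}_{\alpha,\rho,\gamma}\,c^{\mu}_{\gamma,\sigma,\beta}\,g^{\pi}_{\rho,\sigma}$. By Lemma \ref{three-part}, the factor $c^{(r)}_{\alpha,\beta,\pi}$ is nonzero only when $\alpha$, $\beta$, $\pi$ are the (unique) one-part partitions of their respective sizes, i.e. $\alpha=(p-l_1-l_2)$, $\beta=(q-l_1-l_2)$, $\pi=(l_1)$, in which case it equals $1$; note that the size consistency $(p-l_1-l_2)+(q-l_1-l_2)+l_1=r$ is precisely the constraint $l_1+2l_2=p+q-r$. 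Likewise $c^{(p)}_{\alpha,\rho,\gamma}\neq0$ forces $\alpha=(p-l_1-l_2)$, $\rho=(l_1)$, $\gamma=(l_2)$, and $c^{(q)}_{\gamma,\sigma,\beta}\neq0$ forces $\gamma=(l_2)$, $\sigma=(l_1)$, $\beta=(q-l_1-l_2)$; these three sets of conditions are mutually consistent. Finally the surviving Kronecker coefficient is $g^{(l_1)}_{(l_1),(l_1)}=1$. Hence for each pair $(l_1,l_2)$ at most one term survives, and it contributes exactly $1$ precisely when $l_1,l_2\geq0$, $l_1+2l_2=p+q-r$, $p-l_1-l_2\geq0$ and $q-l_1-l_2\geq0$ (so that all the one-part partitions above are genuine). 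Therefore $E^r_{p,q}$ equals the number of such pairs $(l_1,l_2)$.

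It then remains to exhibit a bijection between these pairs and the non-negative integer solutions of (E1). Given such a pair $(l_1,l_2)$, I would set $T=l_1$, $U=l_2$, $L=p-l_1-l_2$, $R=q-l_1-l_2$; the second and third equations of (E1) then hold by construction, the non-negativity of $T,L,R,U$ is exactly the list of conditions above, and the first equation is automatic since $T+L+R=p+q-l_1-2l_2=p+q-(p+q-r)=r$. Conversely, from a solution $(T,L,R,U)$ of (E1) one sets $(l_1,l_2)=(T,U)$; adding the last two equations and using the first gives $T+2U=p+q-r$, while $L\geq0$ and $R\geq0$ yield the remaining inequalities. These two maps are visibly mutually inverse, so the number of non-negative integer solutions of (E1) equals $E^r_{p,q}$.

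The computations are routine; the only point requiring genuine care is the index bookkeeping — verifying that the sizes forced on $\alpha,\beta,\gamma,\pi,\rho,\sigma$ by the three Littlewood–Richardson factors are mutually compatible rather than over-determined, and that the first equation of (E1) is redundant once the other two are combined with $T+2U=p+q-r$. It is also worth remarking that the resulting count depends only on $p,q,r$ and not on $m,n$ (as long as $p\leq m$ and $q\leq n$, which is needed for $\Delta_m((p))$ and $\Delta_n((q))$ to be defined), matching the fact that the system (E1) involves only $p,q,r$.
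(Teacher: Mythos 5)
Your proof is correct and follows essentially the same route as the paper's: substitute the one-part partitions into Theorem~\ref{par-mul}, invoke Lemmas~\ref{two-part}, \ref{three-part}, and the Kronecker lemma to force all internal partitions to be one-part with at most one surviving term per $(l_1,l_2)$, then identify the surviving conditions with the non-negative solutions of (E1). Your additional remarks — explicitly checking that the three Littlewood–Richardson factors impose mutually consistent rather than over-determined sizes, and noting that the first equation of (E1) is redundant given the other two plus $T+2U=p+q-r$ — are sound and make the bookkeeping a bit more transparent than the paper's terser presentation, but they do not change the underlying argument.
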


\begin{proof}
By the Bowman–De Visscher–Orellana formula given in Theorem \ref{par-mul}, set $r = m + n - l$, $p = m - l_m$, $q = n - l_n$, and replace $l_1$ and $l_2$ with $T$ and $U$, respectively. Then $E_{p, q}^{r}$ is expressed as:
\[
E_{p, q}^{r} = \sum_{\substack{T, U \\ T + 2U = p + q - r}} \sum_{\substack{\alpha \vdash L, \, \beta \vdash R \\ \pi, \rho, \sigma \vdash T \\ \gamma \vdash U}} c^{(r)}_{\alpha, \beta, \pi} c^{(p)}_{\alpha, \rho, \gamma} c^{(q)}_{\gamma, \sigma, \beta} g^{\pi}_{\rho, \sigma},
\]
where $L = p - U - T$ and $R = q - U - T$. 

From Lemmas 1-3, non-zero terms exist only when all partitions are one-part:  

1. $c^{(r)}_{\alpha, \beta, \pi} = 1$ requires $\alpha = (L)$, $\beta = (R)$, $\pi = (T)$, and $L + R + T = r$. 
 
2. $c^{(p)}_{\alpha, \rho, \gamma} = 1$ requires $\alpha = (L)$, $\rho = (T)$, $\gamma = (U)$, and $L + T + U = p$.  

3. $c^{(q)}_{\gamma, \sigma, \beta} = 1$ requires $\gamma = (U)$, $\sigma = (T)$, $\beta = (R)$, and $U + T + R = q$.  

4. $g^{\pi}_{\rho, \sigma} = 1$ requires $\pi = \rho = \sigma = (T)$.  

These conditions are equivalent to the system (E1),
whose non-negative integer solutions $(T, U, L, R)$ correspond bijectively to non-zero terms in the sum (each contributing 1). Thus, $E_{p, q}^{r}$ counts the number of solutions.

\end{proof}

\section{Diagram Algebra Approach: A Half-Diagram Decomposition Perspective}
In the work of Bowman et al. \cite{BVC}, restriction multiplicities are studied through diagram decomposition via inflation. The motivation for this section is to adapt their approach by directly decomposing half-diagrams, aiming both to clarify the meaning of the unknowns in the Diophantine equation (E1) and to explain why each solution of the equation contributes exactly 1 to the multiplicity when used as index values.

The tensor product of two diagrams \(d_1 \otimes d_2\) is defined as their horizontal juxtaposition, where \(d_1\) is placed to the left of \(d_2\) (see Figure \ref{walled}(1) for an illustration). The collection of all such juxtaposed diagrams forms a basis for the tensor product algebra \(P_m \otimes P_n\). Similarly, for two half-diagrams \(v_1\) and \(v_2\), the tensor product \(v_1 \otimes v_2\) corresponds to placing \(v_1\) horizontally to the left of \(v_2\), thereby generating a basis for the \(P_m \otimes P_n\)-module \(V_{m}(p) \boxtimes V_{n}(q)\), where \(P_m\) and \(P_n\) act independently on \(V_{m}(p)\) and \(V_{n}(q)\), respectively.

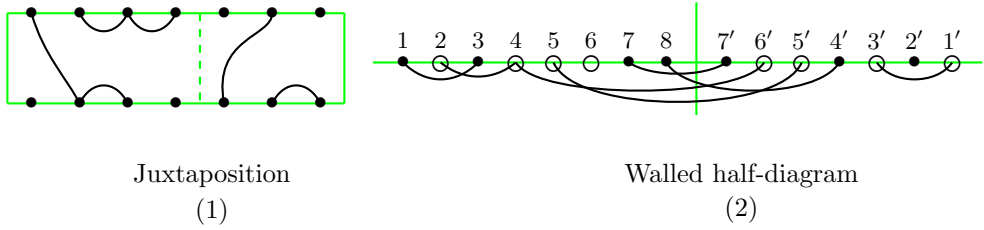
\begin{figure}[ht] \label{W}
	\begin{equation*}
		\begin{array}{c}
			\begin{tikzpicture}[scale=0.8] 
				\draw [green,thick](-0.4,1.5) --(2.8,1.5);
				\draw [green,thick](-0.4,0) --(2.8,0);
				\draw [green,thick](-0.4,1.5) --(-0.4,0);
				\draw [green,dashed,thick](2.8,1.5) --(2.8,0);
			
				\node at (0,1.5) {\textbullet}; 
				\node at (0.8,1.5) {\textbullet};
				\node at (1.6,1.5) {\textbullet};
				\node at (2.4,1.5) {\textbullet};
				\node at (0,0) {\textbullet}; 
				\node at (0.8,0) {\textbullet};
				\node at (1.6,0) {\textbullet};
				\node at (2.4,0) {\textbullet};
				\draw[thick] (0.8,1.5) .. controls (1,1.1) and (1.4,1.1) ..  (1.6,1.5); 
				\draw[thick] (0,1.5) .. controls (0.2,0.8) and (0.6,0.4) ..  (0.8,0); 
				\draw[thick] (0.8,0) .. controls (1,0.4) and (1.4,0.4) ..  (1.6,0); 
				\draw [thick](1.6,1.5) .. controls (1.8,1.1) and (2.2,1.1) ..  (2.4,1.5);
				\node at (3,-1.2) { Juxtaposition};
				\draw [green,thick](2.8,1.5) --(5.2,1.5); 
				\draw [green,thick](2.8,0) --(5.2,0);

				\draw [green,thick](5.2,1.5) --(5.2,0);

				\node at (3.2,1.5) {\textbullet}; 
				\node at (4.0,1.5) {\textbullet};
				\node at (4.8,1.5) {\textbullet};

				\node at (3.2,0) {\textbullet}; 
				\node at (4,0) {\textbullet};
				\node at (4.8,0) {\textbullet};
	\draw [thick](4.0,1.5) .. controls (4,1.1) and (3,1.1) ..  ((3.2,0);
						\draw [thick](4,0) .. controls (4.2,0.4) and (4.6,0.4) ..  (4.8,0); 
								\node at (3,-1.8) {(1) };
			\end{tikzpicture}
		\end{array}
		\begin{array}{c}
		\begin{tikzpicture}
		\draw [green,thick](-1.9,0.7) --(6.2,0.7);
		\draw [green,thick](2.4,1.5) --(2.4,0);

			\node at (-1.5,0.7) {\textbullet}; 
			\node at (-1,0.67) {{\huge\textopenbullet}}; 
		\node at (-0.5,0.7) {\textbullet}; 
		\node at (0,0.67) {{\huge\textopenbullet}}; 
		\node at (0.5,0.67) {{\huge\textopenbullet}};
		\node at (1,0.67) {{\huge\textopenbullet}};
		\node at (1.5,0.7) {\textbullet};
		\node at (2,0.7) {\textbullet};
			\node at (-1.5,1) {\small$1$}; 
		\node at (-1,1) {\small$2$}; 
		\node at (-0.5,1) {\small$3$}; 
					\node at (0,1) {\small$4$}; 
	\node at (0.5,1) {\small$5$};
\node at (1,1) {\small$6$};
\node at (1.5,1) {\small$7$};
\node at (2,1) {\small$8$};

				\node at (2.8,0.7) {\textbullet};
						\node at (3.3,0.67) {{\huge\textopenbullet}};
				\node at (3.8,0.67) {{\huge\textopenbullet}};
						\node at (4.3,0.7) {\textbullet};
		\node at (4.8,0.67) {{\huge\textopenbullet}};
				\node at (5.3,0.7) {\textbullet};
						\node at (5.8,0.67) {{\huge\textopenbullet}};
						
							\node at (2.8,1) {\small$7'$};
						\node at (3.3,1) {\small$6'$};
						\node at (3.8,1) {\small$5'$};
						\node at (4.3,1) {\small$4'$};
						\node at (4.8,1) {\small$3'$};
						\node at (5.3,1) {\small$2'$};
						\node at (5.8,1) {\small$1'$};
			\draw [thick](-1.5,0.7)  .. controls (-1.2,0.4) and (-0.8,0.4) ..  (-0.5,0.7) ; 
		\draw [thick](-1,0.7) .. controls (-0.7,0.45) and (-0.3,0.45) ..  (0,0.7) ; 
			\draw [thick](0,0.7) .. controls (0.3,0.2) and (3,0.2) ..  (3.3,0.7) ; 
				\draw [thick](0.5,0.7)   .. controls (0.8,0) and (3.5,0) ..  (3.8,0.7) ; 
					\draw [thick](1.5,0.7)  .. controls (1.8,0.5) and (2.5,0.5) ..  (2.8,0.7) ; 
					\draw [thick](2,0.7)  .. controls (2.3,0.2) and (4,0.2) ..  (4.3,0.7) ; 
						\draw [thick](4.8,0.7)  .. controls (5,0.4) and (5.6,0.4) ..  (5.8,0.7) ; 
			\node at (3,-1.25) {(2) };
		\node at (3,-0.8) { Walled half-diagram};
	\end{tikzpicture}
	\end{array}
	\end{equation*}
	\vskip -0.3cm
	\caption{Walled diagram }
	\label{walled}
\end{figure}

\subsection{Walled Diagrams and Parameterization} 
When we consider the action of \(P_m \otimes P_n\) on an \((m+n)\)-half-diagram, a vertical line can be added to the diagram as a `wall' (see Figure \ref{walled}(2) for an example). 
Formally, an $(m~|~n, r)$-\textit{walled half-diagram} is a set-partition of the set $\{1,\cdots,m, n',\cdots,1'\}$ with ordering $1<2<\cdots<m<n'<\cdots<1'$, and has $r$ labeled blocks. When represented by a diagram, a vertical line is added between dot $m$ and dot $n'$ as a wall, and all dots in labeled blocks are marked with white dots. 

In the proof of Theorem 4.3 in \cite{BVC}, Bowman et al. introduced certain indices when decomposing \((n, r)\)-diagrams. These same indices remain applicable for decomposing half-diagrams, and we adopt them here following their approach.

Blocks of a walled half-diagram can be divided into the following types:

1) Labeled blocks crossing the wall are referred to as \textit{through-labeled blocks}, and $T$ denotes the number of such blocks.

2) Unlabeled blocks crossing the wall are referred to as \textit{through-unlabeled blocks}, and $U$ denotes the number of such blocks.

3) The labeled blocks on the left (or right) sides of the wall are referred to as the \textit{left (or right) labeled blocks}, and the number is denoted as $L$ (or $R$). 

There are also unlabeled blocks on the left or right. Because the count of such blocks is not needed as a key index in the restriction to $P_m \otimes P_n$, we do not introduce special notation for it.

For each walled half-diagram, we assign an index $(U~;~T, L, R)$, where the right side of the semicolon indicates the numbers of the three types of labeled blocks.
Obviously, $T+L+R=r$. Figure \ref{walled}(2) shows an example of a walled half diagram, with $\{2,4,6'\}$, $\{5,5'\}$ being  through-labeled blocks, $\{7,7'\}$,  $\{8,4'\}$  being  through-unlabeled blocks, and $\{6\}$ and $\{3', 1'\}$ being the left and right labeled blocks, respectively. Figure \ref{walled}(2) is thus a $(8~|~7,~4)$-walled half-diagram with the index $(2~;~2,1,1)$.

When considering the restriction of \(P_{m+n}\) to \(P_m \otimes P_n\), the changes in indices under the action on walled diagrams can be summarized in Lemma \ref{order}. The conclusions can be directly derived by considering the action of the generators of \(P_m \otimes P_n\) on walled diagrams. Here, the generators are obtained by juxtaposing the identity diagrams $\begin{minipage}[c]{4cm} \xysmall{\bullet \tra[d] & \bullet \tra[d]& \cdots &\bullet \tra[d]   \\ \bullet& \bullet & \cdots& \bullet} \end{minipage}$ on the left and right of the three types of diagrams \(e_{i,j}\), \(p_i\), \(s_{i,j}\) shown in Figure 3. The three types of diagrams placed to the left of the identity diagrams are denoted as \(e_{i,j}^L\), \(p_i^L\), and \(s_{i,j}^L\). Similarly, those on the right are denoted as \(e_{i,j}^R\), \(p_i^R\), and \(s_{i,j}^R\). Figure \ref{Ogene} provides an example.

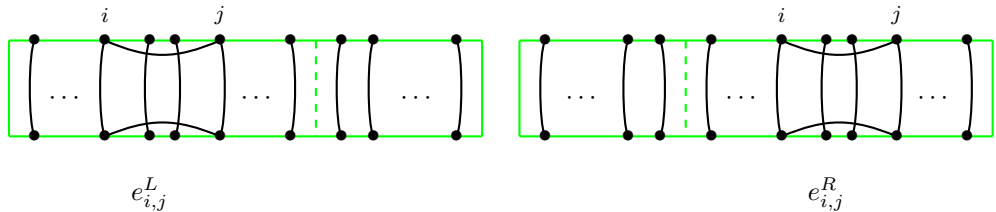
\begin{figure}[h]
	\begin{equation*}
		\begin{array}{c}
			\begin{tikzpicture}[scale=0.85] 
				\draw [green,thick](-0.4,1.5) --(7,1.5);
				\draw [green,thick](-0.4,0) --(7,0);
				\draw [green,thick](-0.4,1.5) --(-0.4,0);
				\draw [green,dashed,thick](4.4,1.5) --(4.4,0);
				\draw [green,thick](7,1.5) --(7,0);
			
				\node at (0,1.5) {\textbullet}; 
				\node at (1.1,1.5) {\textbullet};
				\node at (1.8,1.5) {\textbullet};
				\node at (2.2,1.5) {\textbullet};
				\node at (1.1,1.9) { \footnotesize$i$};
				\node at (2.9,1.9) { \footnotesize$j$};
				\node at (2.9,1.5) {\textbullet};
				\node at (4,1.5) {\textbullet};
				\node at (0,0) {\textbullet}; 
				\node at (1.1,0) {\textbullet};
				\node at (1.8,0) {\textbullet};
				\node at (2.2,0) {\textbullet};
				\node at (2.9,0) {\textbullet};
				\node at (4,0) {\textbullet};
				\node at (0.5,0.6) { $\cdots$};
				\node at (3.5,0.6) { $\cdots$};
				\draw [thick](0,1.5) .. controls (-0.1,1.3) and (-0.1,0.2) ..  (0,0); 
				\draw [thick](1.1,1.5) .. controls (1,1.3) and (1,0.2) ..  (1.1,0); 
				\draw [thick](2.2,1.5) .. controls (2.3,1.3) and (2.3,0.2) ..  (2.2,0); 
				\draw [thick](2.9,1.5) .. controls (3,1.3) and (3,0.2) ..  (2.9,0); 
				\draw [thick](4,1.5) .. controls (4.1,1.3) and (4.1,0.2) ..  (4,0); 
				\draw [thick](1.8,1.5) .. controls (1.7,1.3) and (1.7,0.2) ..  (1.8,0); 
				\node at (1.8,-0.9) { $e_{i,j}^L$};
				\draw [thick](1.1,1.5) .. controls (1.8,1.2) and (2.2,1.2) ..  (2.9,1.5);
				\draw [thick](1.1,0) .. controls (1.8,0.29) and (2.2,0.29) ..  (2.9,0);
				
				\node at (4.8,0) {\textbullet};
				\node at (4.8,1.5) {\textbullet};
				\draw [thick](4.8,1.5) .. controls (4.7,1.3) and (4.7,0.2) ..  (4.8,0); 
				
				\node at (5.3,0) {\textbullet};
				\node at (5.3,1.5) {\textbullet};
				\draw [thick](5.3,1.5) .. controls (5.2,1.3) and (5.2,0.2) ..  (5.3,0); 
				
				\node at (6.6,0) {\textbullet};
				\node at (6.6,1.5) {\textbullet};
				\draw [thick](6.6,1.5) .. controls (6.7,1.3) and (6.7,0.2) ..  (6.6,0); 
				\node at (6,0.6) { $\cdots$};
			\end{tikzpicture}
		\end{array}~~~~~~~~
		\begin{array}{c}
			\begin{tikzpicture}[scale=0.85] 
				\draw [green,thick](-3,1.5) --(4.4,1.5);
				\draw [green,thick](-3,0) --(4.4,0);
				\draw [green,dashed,thick](-0.4,1.5) --(-0.4,0);
				\draw [green,thick](-3,1.5) --(-3,0);
				\draw [green,thick](4.4,1.5) --(4.4,0);
			
				\node at (0,1.5) {\textbullet}; 
				\node at (1.1,1.5) {\textbullet};
				\node at (1.8,1.5) {\textbullet};
				\node at (2.2,1.5) {\textbullet};
				\node at (1.1,1.9) { \footnotesize$i$};
				\node at (2.9,1.9) { \footnotesize$j$};
				\node at (2.9,1.5) {\textbullet};
				\node at (4,1.5) {\textbullet};
				\node at (0,0) {\textbullet}; 
				\node at (1.1,0) {\textbullet};
				\node at (1.8,0) {\textbullet};
				\node at (2.2,0) {\textbullet};
				\node at (2.9,0) {\textbullet};
				\node at (4,0) {\textbullet};
				\node at (0.5,0.6) { $\cdots$};
				\node at (3.5,0.6) { $\cdots$};
				\draw[thick] (0,1.5) .. controls (-0.1,1.3) and (-0.1,0.2) ..  (0,0); 
				\draw [thick](1.1,1.5) .. controls (1,1.3) and (1,0.2) ..  (1.1,0); 
				\draw [thick](2.2,1.5) .. controls (2.3,1.3) and (2.3,0.2) ..  (2.2,0); 
				\draw [thick](2.9,1.5) .. controls (3,1.3) and (3,0.2) ..  (2.9,0); 
				\draw [thick](4,1.5) .. controls (4.1,1.3) and (4.1,0.2) ..  (4,0); 
				\draw [thick](1.8,1.5) .. controls (1.7,1.3) and (1.7,0.2) ..  (1.8,0); 
				
				\draw [thick](1.1,1.5) .. controls (1.8,1.2) and (2.2,1.2) ..  (2.9,1.5);
				\draw [thick](1.1,0) .. controls (1.8,0.29) and (2.2,0.29) ..  (2.9,0);
				
				\node at (1.8,-0.9) { $e_{i,j}^R$};
				
				\node at (-0.8,0) {\textbullet};
				\node at (-0.8,1.5) {\textbullet};
				\draw [thick](-0.8,1.5) .. controls (-0.7,1.3) and (-0.7,0.2) ..  (-0.8,0); 
				
				\node at (-1.3,0) {\textbullet};
				\node at (-1.3,1.5) {\textbullet};
				\draw [thick](-1.3,1.5) .. controls (-1.4,1.3) and (-1.4,0.2) ..  (-1.3,0); 
				
				\node at (-2.6,0) {\textbullet};
				\node at (-2.6,1.5) {\textbullet};
				\draw [thick](-2.6,1.5) .. controls (-2.7,1.3) and (-2.7,0.2) ..  (-2.6,0); 
				\node at (-2,0.6) { $\cdots$};
			\end{tikzpicture}
		\end{array}
	\end{equation*}

	\vskip -0.3cm
	\caption{An example of generators of $P_m \otimes P_n$}
	\label{Ogene}
\end{figure}

Let $d$ be a generator in $P_m \otimes P_n$, $v$ be a walled half-diagram with the index $(U~;~T, L, R)$, and $T+L+R=r$ be the total number of labeled blocks. The resulting diagram after concatenation of $d$ and $v$ is denoted as $v',$ and its index is $(U'~;~T', L',R')$. All changes to these indices are listed in Lemma \ref{order}(see also the proof of Theorem 4.3 in \cite{BVC} for $(n,r)$-diagrams). These changes come directly from how the generators act.   For example, the first case in Lemma \ref{order} happens when the generator is \(d = p_i^L\) (or \(p_i^R\)), where 
$i$ belongs to a block \(B\) that crosses the wall, and \(i\) is the only node of block \(B\) on the left (respectively, right) side of the wall.  

\begin{lemma} \label{order}
	
There are three cases in which the index changes but the number $r$ of labeled blocks remains constant.

   (I) $(U'~;~ T', L', R')=(U~;~T-1, L, R+1 )$ (or $=(U~;~T-1, L +1, R )$)

(II) $(U'~;~ T', L', R')=(U -1~;~ T+1,L-1,R)$  (or=$(U -1~;~ T+1, L, R-1)$)

   (III) $(U'~;~ T', L', R')=(U -1~;~ T, L, R) $

There are two cases in which the index changes and the total number $r$ of labeled blocks decreases, as shown below. 

   (IV) $(U'~;~ T',L',R')=(U~ ;~ T, L-1, R)$ (or $=(U ~ ;~  T,L,R-1)$)

  (V) $(U'~;~ T',L',R')=(U ~;~ T-1, L ,R )$

\end{lemma}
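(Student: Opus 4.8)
The plan is to verify the lemma by a direct case analysis over the six generator types of $P_m\otimes P_n$, namely $p_i^{L},e_{i,j}^{L},s_{i,j}^{L}$ and their right-hand counterparts $p_i^{R},e_{i,j}^{R},s_{i,j}^{R}$. For each such generator $d$ I would first identify the combinatorial operation that $d$ performs on a walled half-diagram $v$: place $d$ above $v$ and read off the top row, using the labeling rule for the action on half-diagrams, namely that a block of the result is labeled precisely when it is joined, through the stacked picture, to a labeled block of $v$. Since $s_{i,j}$ is the transposition diagram, $s_{i,j}^{L}\cdot v$ (respectively $s_{i,j}^{R}\cdot v$) is simply $v$ with the two left (respectively right) dots $i,j$ interchanged; this merely relabels positions within one side of the wall, so it preserves each block's number of left and right dots together with its labeled status, and hence $s^{L}$ and $s^{R}$ never change the index. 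Thus only $p^{L/R}$ and $e^{L/R}$ can alter $(U;T,L,R)$, and the lemma reduces to analysing two elementary moves on set partitions, each of which changes $r$ by at most one since $p_i$ splits one block in two and $e_{i,j}$ fuses two blocks into one.

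The first move comes from $p_i^{L}$ (and symmetrically $p_i^{R}$): since both dots of $p_i$ carrying the label $i$ are singletons, placing $p_i^{L}$ above $v$ detaches the left dot $i$ from its block $B$, creating a fresh singleton $\{i\}$ that is \emph{unlabeled} — the lower dot of $p_i$ being isolated, $\{i\}$ is joined to nothing in $v$ — while $B\setminus\{i\}$ inherits the labeled status of $B$. (If $B=\{i\}$ an extra closed component is discarded, contributing an irrelevant scalar $\delta$.) Running through the configurations of $B$: if $i$ is the unique left dot of a through-block $B$, then $B\setminus\{i\}$ becomes an all-right block, which gives case (I) when $B$ is labeled and case (III) when $B$ is unlabeled; if $B=\{i\}$ is a left-labeled singleton it turns into an unlabeled singleton and $r$ drops, giving case (IV) in its $L-1$ form; in every remaining configuration $B\setminus\{i\}$ has the same wall-crossing type and labeled status as $B$, so the index is unchanged. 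The generator $p_i^{R}$ produces the mirror variants of (I), (III) and (IV).

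The second move comes from $e_{i,j}^{L}$ (and symmetrically $e_{i,j}^{R}$): since the nontrivial block of $e_{i,j}$ is $\{i,j,i',j'\}$, placing $e_{i,j}^{L}$ above $v$ fuses the two blocks $B_i,B_j$ of $v$ containing the left dots $i$ and $j$ into one block $M=B_i\cup B_j$ of the result, and no closed component ever arises because $M$ meets the top row; moreover $M$ is labeled if and only if $B_i$ or $B_j$ is. Because $i$ and $j$ lie on the left of the wall, each of $B_i,B_j$ contains a left dot, hence is a left-block or a through-block; running through how labeled/unlabeled and crossing/non-crossing combine under the fusion gives: merging a left-labeled block with a through-unlabeled block yields case (II); merging two through-unlabeled blocks, or a through-labeled block with a through-unlabeled block, yields case (III); merging a left-labeled block with a through-labeled block, or two left-labeled blocks, yields case (IV) in its $L-1$ form, with $r$ dropping by one; merging two through-labeled blocks yields case (V); and every remaining merge, as well as the degenerate possibility $B_i=B_j$, leaves the index fixed. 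Collecting the outcomes of the two moves together with their right-hand mirrors produces exactly the list (I)--(V), with (I)--(III) preserving $r=T+L+R$ and (IV)--(V) lowering it by one.

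The only genuine difficulty is the bookkeeping. One must keep straight the labeling rule — in particular that the dot detached by $p_i$ is unlabeled precisely because its partner dot is isolated — exploit the fact that a left-acting generator can only touch blocks containing a left dot, which prunes the case tree substantially, and be systematic enough that the enumeration of the split and fusion configurations is manifestly exhaustive, so that no additional index change is overlooked and no case among (I)--(V) is claimed that cannot in fact occur. No individual step needs more than careful inspection of the stacked diagrams; the content of the lemma is exactly that these two local moves have only the five listed effects on the index.
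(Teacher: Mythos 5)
Your proposal is correct and follows exactly the route the paper indicates but does not write out: a generator-by-generator case analysis, noting that $s^{L/R}$ preserves the index, and tracing how $p^{L/R}$ (removing a dot) and $e^{L/R}$ (merging two blocks on one side) affect the four counts, using the key labeling observation that the detached dot in $p_i\cdot v$ is never labeled. Your enumeration of splitting and merging configurations is exhaustive and matches the five listed cases together with the index-preserving actions, so it supplies precisely the details the paper elides by saying the changes ``come directly from how the generators act.''
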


Let  $I_{m | n}^r$  be the index set  consisting of all possible  indices  $(U~;~T,L,R)$ of the $(m | n, r)$-walled half-diagrams.  
We adopt  a lexicographic order on the index set \( I_{m|n}^r \): For two indices \((U~;~ T, L, R)\) and \((U'~;~ T', L', R')\), we write \((U~;~ T, L, R) < (U'~;~ T', L', R')\) if and only if the former is smaller when compared sequentially by the priorities \( U, T, L, R \).

This order is compatible with Lemma \ref{order}, as the action of any generator $d \in P_m \otimes P_n$ strictly decreases the index lexicographically.
 For example: when the generator decreases \( U \) (as in cases (II) or (III)), decreases \( T \) with \( U \) unchanged (case (I)), decreases \( L\) or \(R \) with \( U, T \) fixed (case (IV)), or solely reduces \( T \) (case (V)), the order is strictly lowered.

When we consider restricting the \(P_{m+n}\) module \(V_{m+n}(r)\) to the subalgebra \(P_{m} \otimes P_{n}\), we denote this as \(V_{m+n}(r) \downarrow_{m,n}^{m+n}\). It admits a filtration via lexicographic order. Each  \textit{walled module} is a submodule in this filtration, defined as follows:

\begin{defin}\label{wall}  
	Let $(U~;~T, L, R)$ be an index for $(m | n, r)$-walled half-diagrams. The $(m|n)$-walled module $W_{m | n}(U~;~T, L, R)$ is the submodule of $V_{m+n}(r) \downarrow_{m,n}^{m+n}$ spanned by all $(m|n, r)$-walled half-diagrams with indices in $I_{m|n}^r$ lexicographically $\leq (U~;~T, L, R)$.  
\end{defin}

This provides a filtration:  
\[
0 \subset \cdots \subset W_{m|n}(U'; T', L', R') \subset W_{m|n}(U; T, L, R) \subset \cdots \subset V_{m+n}(r)\!\!\downarrow_{m,n}^{m+n}. \eqno(*)
\]

	Let $V_{m|n}(U~;~T, L, R):=W_{m|n}(U~;~T, L, R)/ W_{m|n}(U^\prime~;~T^\prime, L^\prime, R^\prime)$ be a subquotient of the chain ($*$). Moreover, this subquotient is naturally isomorphic to the vector space spanned by all $(m|n, r)$-walled half-diagrams of index $(U~;~T, L, R)$.
Note that if a generator $d \in P_m \otimes P_n$ acts so that the index changes as in cases (IV) or (V) of Lemma \ref{order}, the action is zero, because it lowers the total number $r$ of labeled blocks.

\subsection{Diagram Decomposition via Inflation}

By decomposing walled half-diagrams, we can characterize the subquotients in filtration (\(*\)). As detailed in Proposition \ref{iso}, a subquotient can be decomposed into the tensor product of two smaller half-diagram modules, corresponding to `Part I', and the induction of symmetric groups, corresponding to `Part II'.
Based on Theorem \ref{par-mul} by Bowman et al., the restriction $V_{m+n}(r)\!\!\downarrow_{m,n}^{m+n}$ admits a filtration by standard modules. In fact, `Part II' can be further decomposed into Specht modules, which then combine with `Part I' to form standard modules. Subsequently, we shall utilize the decomposition of `Part II' to demonstrate that each index contributes with multiplicity one, while `Part I' corresponds to the Diophantine equation (E1).

\begin{prop}\label{iso}
As a $P_m \otimes P_n$-module,
 $V_{m|n}(U ~;~ T,L,R )$ is isomorphic to
	$$	
\underbrace{\big (~V(m,U+T+L)~	\boxtimes~V(n, U+T+R) ~\big ) }_{\text{Part I}}
~\otimes~
\underbrace{\big( \mathbb{C}\mathfrak{S}_{U} \boxtimes \mathbb{C}\mathfrak{S}_{T}\big)}_{\text{Part II}},$$
where the tensor product $\otimes$ over algebra  $\mathbb{C}\mathfrak{S}_{U+T+L,U+T+R}$. 
\end{prop}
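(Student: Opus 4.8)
The plan is to build an explicit isomorphism by decomposing each $(m\mid n,r)$-walled half-diagram of index $(U;T,L,R)$ into three pieces: its restriction to the dots $\{1,\dots,m\}$ on the left of the wall, its restriction to the dots $\{n',\dots,1'\}$ on the right of the wall, and the "connection data" across the wall recording how the $U$ through-unlabeled blocks and $T$ through-labeled blocks match up their left halves with their right halves. First I would standardize walled half-diagrams analogously to the standardization of partition diagrams in \S2.2: for each block crossing the wall, select the leftmost node on the left side and the leftmost node on the right side as the designated endpoints of the "connecting string," then cut that string. Cutting turns a through-labeled block into a left labeled block (of size $U+T+L$ in total count: the original $L$ left labeled blocks, plus $T$ from the through-labeled ones, plus $U$ newly labeled ones coming from the through-unlabeled blocks that must now be marked to record their severed connection) together with a right labeled block, and similarly for through-unlabeled blocks. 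This produces a pair of ordinary diagrams: one in $V(m,U+T+L)$ and one in $V(n,U+T+R)$, together with the bookkeeping of which severed left block is glued to which severed right block, which is exactly a pair of permutations in $\mathfrak{S}_U\times\mathfrak{S}_T$ (the through-unlabeled matching and the through-labeled matching are independent). This gives a bijection on bases, hence a vector-space isomorphism with the claimed right-hand side.

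Next I would check that this bijection is compatible with the $P_m\otimes P_n$-action, working generator by generator using Lemma \ref{order}. The point is that the subquotient $V_{m|n}(U;T,L,R)$ only retains the action "at a fixed index": by the last paragraph of \S4.2, any generator whose action changes the index to something strictly smaller acts as zero in the subquotient (cases (I), (II), (III) drop the index; cases (IV), (V) drop $r$ and are zero for a different reason). So the only surviving contributions are generators that fix the index $(U;T,L,R)$ — these are precisely the $e_{i,j}^{L}$, $p_i^{L}$, $s_{i,j}^{L}$ (and their right-hand analogues) whose action keeps all four numbers the same. One checks that such a generator, acting on the left, manipulates only the left diagram in $V(m,U+T+L)$ exactly as the corresponding generator of $P_m$ acts on $V(m,U+T+L)$, except that an $s_{i,j}^{L}$ swapping two left endpoints of severed blocks permutes the labels and hence acts on the $\mathfrak{S}_{U}\boxtimes\mathfrak{S}_T$ factor — and this is exactly the balanced tensor relation over $\mathbb{C}\mathfrak{S}_{U+T+L,\,U+T+R}$ that appears on the right-hand side. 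The balancing is what reconciles "a permutation of endpoints inside the $V(m,\cdot)$ diagram" with "a permutation recorded in Part II," so the $\otimes$ must be taken over $\mathbb{C}\mathfrak{S}_{U+T+L,U+T+R}$, acting diagonally through its natural maps to the symmetric groups on the left/right labeled endpoints and to $\mathfrak{S}_U\times\mathfrak{S}_T$.

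The main obstacle I anticipate is getting the bookkeeping of the balanced tensor product exactly right: one must verify that the left $\mathfrak{S}_{U+T+L,U+T+R}$-action on $V(m,U+T+L)\boxtimes V(n,U+T+R)$ (permuting the severed endpoints) and the action on $\mathbb{C}\mathfrak{S}_U\boxtimes\mathbb{C}\mathfrak{S}_T$ (by left/right translation, appropriately split according to which endpoints are "unlabeled-origin" versus "labeled-origin") are intertwined precisely so that the balanced quotient recovers the set of walled half-diagrams without over- or under-counting. Concretely, the subtlety is that relabeling the severed endpoints and simultaneously relabeling the gluing permutation leaves the reconstructed walled half-diagram unchanged — this is the balancing relation — whereas doing only one of the two changes the diagram; tracking this through standardization (where "leftmost node" choices are forced) is where a careless argument would slip. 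Once that compatibility is nailed down, functoriality of the balanced tensor product and the generator-by-generator check together give that the basis bijection is a $P_m\otimes P_n$-module isomorphism, completing the proof.
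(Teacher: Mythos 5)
Your approach is essentially the same as the paper's: both cut the through-strings of a standardized walled half-diagram, record the gluing data as permutations in $\mathfrak{S}_U\times\mathfrak{S}_T$ modulo the balancing relation over $\mathbb{C}\mathfrak{S}_{U+T+L,\,U+T+R}$, and reassemble this into the stated balanced tensor product. The only real difference is expository: the paper works through the special cases $(0;0,L,R)$ and $(U;T,0,0)$ before dispatching the general index with a one-sentence remark, whereas you treat the general index directly and make explicit the generator-by-generator module-compatibility check (via Lemma \ref{order} and the observation that index-changing actions vanish in the subquotient) that the paper leaves implicit.
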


\proof

We'll first consider some specific cases and then deal with  the general case.

\smallskip

\textbf{Case:}   $\mathbf{(0~;~0, L, R)}$. 

Given a walled diagram \(v\) with index \( (0~;~0, L, R) \), since it has no strings crossing the wall, it is essentially a juxtaposition of an \( (m, L) \)-half-diagram \( v_1 \) and an \( (n, R) \)-half-diagram \( v_2 \). Therefore, we have the following isomorphism: 
\begin{eqnarray*}
V_{m|n}(0 ~;~ 0, L, R ) &\cong&  V_{m}(L)~ \boxtimes~ V_{n}(R)\\
	& \cong &\Big(V(m,L)\otimes_{\mathbb{C}\mathfrak{S}_L}\mathbbm{1}_L\Big) \boxtimes \Big(V(n,R)\otimes_{\mathbb{C}\mathfrak{S}_R}\mathbbm{1}_R\Big) \\
	& \cong& \Big(V(m,L)\boxtimes V(n,R)\Big) \otimes_{\mathbb{C}\mathfrak{S}_{L, R}} \Big(\mathbbm{1}_L\boxtimes\mathbbm{1}_R\Big) .
\end{eqnarray*}

\textbf{Case:}  $\mathbf{(U~;~T, 0, 0)}$.

Let \(v\) be an \((m | n)\)-walled half-diagram with index \((U~;~T, 0, 0)\). We will proceed with its decomposition, as an example illustrated in Figure \ref{TT00}.   First, the part to the right of the wall is rotated clockwise to the bottom, resulting in a partition diagram with \(m\) dots on top and \(n\) dots below. Note that the strings originally passing through the wall become propagating strings in the new diagram.  This diagram is then standardized (see Section 1.1), meaning that only a single string connects the blocks at the top and bottom. Additionally, the leftmost dots of each block are used for connections.

As shown in Figure~\ref{TT00}(1), we perform ``surgery'' on the diagram as follows:  

First, cut the propagating strings with two horizontal lines, dividing the diagram into three parts: the upper part \(v_1\), the middle part \(\sigma\), and the lower part \(v_2\). Here, \(v_1\) and \(v_2\) belong to \(V(m, U+T)\) and \(V(n, U+T)\), respectively.  

The middle part consists of all propagating strings determining a permutation \(\sigma \in \mathfrak{S}_{U+T}\). This permutation can be decomposed further as:  
\[
\alpha(\sigma_1 \otimes \sigma_2)\alpha^{\prime} \in~ _{\mathbb{C}\mathfrak{S}_{U+T}}\!\left( \mathbb{C}\mathfrak{S}_{U} \boxtimes \mathbb{C}\mathfrak{S}_{T} \right)_{\mathbb{C}\mathfrak{S}_{U+T}},
\]  
where \(\sigma_1 \in \mathfrak{S}_{U}\) and \(\sigma_2 \in \mathfrak{S}_{T}\) are determined by the labeled and non-labeled blocks, respectively.  

Since the tensor product is balanced over \(\mathbb{C}\mathfrak{S}_{U+T,U+T}\), both \(v_1 \cdot \alpha\) and \(v_2 \cdot \alpha^{\prime}\) remain in \(V(m, U+T)\) and \(V(n, U+T)\), respectively.  

Therefore, we obtain the isomorphism:  
\[
V_{m | n}(U~;~ T, 0, 0 ) \cong \left(V(m, U+T) \boxtimes V(n, U+T)\right) \otimes \left( \mathbb{C}\mathfrak{S}_{U} \boxtimes \mathbb{C}\mathfrak{S}_{T} \right).
\]  

As shown in Figure~\ref{TT00}(2), this case also corresponds to a half-diagram: when cutting the propagating strings, all strings degenerate into labeled dots, yielding a half-diagram in \(V_m(U+T) \boxtimes V_n(U+T)\).

\begin{figure}[ht]
			\begin{tikzpicture}
				
				%
				
				\draw [green,thick](-1.9,0.7) --(6.2,0.7);
				\draw [green,thick](2.4,1.5) --(2.4,0);
				
				\node at (-1.5,0.7) {\textbullet}; 
				\node at (-1,0.68) {{\huge\textopenbullet}}; 
				\node at (-0.5,0.7) {\textbullet}; 
				\node at (0,0.68) {{\huge\textopenbullet}}; 
				\node at (0.5,0.68) {{\huge\textopenbullet}};
				\node at (1,0.7) {\textbullet};
				\node at (1.5,0.7) {\textbullet};
				\node at (2,0.7) {\textbullet};
				\node at (-1.5,1) {\small$1$}; 
				\node at (-1,1) {\small$2$}; 
				\node at (-0.5,1) {\small$3$}; 
				\node at (0,1) {\small$4$}; 
				\node at (0.5,1) {\small$5$};
				\node at (1,1) {\small$6$};
				\node at (1.5,1) {\small$7$};
				\node at (2,1) {\small$8$};

				\node at (2.8,0.7) {\textbullet};
				\node at (3.3,0.68) {{\huge\textopenbullet}};
				\node at (3.8,0.68) {{\huge\textopenbullet}};
				\node at (4.3,0.7) {\textbullet};
				\node at (4.8,0.7) {\textbullet};
				\node at (5.3,0.7) {\textbullet};
				\node at (5.8,0.7) {\textbullet};
				
				\node at (2.8,1) {\small$7'$};
				\node at (3.3,1) {\small$6'$};
				\node at (3.8,1) {\small$5'$};
				\node at (4.3,1) {\small$4'$};
				\node at (4.8,1) {\small$3'$};
				\node at (5.3,1) {\small$2'$};
				\node at (5.8,1) {\small$1'$};
				\draw [thick] (-1.5,0.7)  .. controls (-1.2,0.4) and (-0.8,0.4) ..  (-0.5,0.7) ; 
				\draw [thick](-1,0.7) .. controls (-0.7,0.45) and (-0.3,0.45) ..  (0,0.7) ; 
				\draw[red,thick] (0,0.7) .. controls (0.3,0.2) and (3,0.2) ..  (3.3,0.7) ; 
				\draw[red,thick]  (0.5,0.7)   .. controls (0.8,0) and (3.5,0) ..  (3.8,0.7) ; 
				\draw [blue,thick] (1.5,0.7)  .. controls (1.8,0.5) and (2.5,0.5) ..  (2.8,0.7) ; 
				\draw [blue,thick] (2,0.7)  .. controls (2.3,0.2) and (4,0.2) ..  (4.3,0.7) ; 
				\draw [thick](4.8,0.7)  .. controls (5,0.4) and (5.6,0.4) ..  (5.8,0.7) ; 
				
				\draw[->,thick] (3.4,-0.2) arc(0:-120:1.5);
			\end{tikzpicture}

			\begin{tikzpicture}
				
				\draw [green,thick](-1.9,0.7) --(2.4,0.7);
				
				\draw [green,thick](-1.9,-1) --(1.9,-1);
				\draw [green,dashed,thick](-1.9,0.2) --(2.3,0.2);
				\draw [green,dashed,thick](-1.9,-0.5) --(2.3,-0.5);			
				
							\node at (-3,-0.15) {\small$ 	\longleftrightarrow $};	
				\node at (-2.2,0.5) {\small$  V(8, 4)  \ni$};	
				\node at (-2.2,-0.8) {\small$   V(7, 4)\ni$};	
				\node at (-1.5,0.7) {\textbullet}; 
				\node at (-1,0.68) {{\huge\textopenbullet}}; 
				\node at (-0.5,0.7) {\textbullet}; 
				\node at (0,0.68) {{\huge\textopenbullet}}; 
				\node at (0.5,0.68) {{\huge\textopenbullet}};
				\node at (1,0.7) {\textbullet};
				\node at (1.5,0.7) {\textbullet};
				\node at (2,0.7) {\textbullet};
				\node at (-1.5,1) {\small$1$}; 
				\node at (-1,1) {\small$2$}; 
				\node at (-0.5,1) {\small$3$}; 
				\node at (0,1) {\small$4$}; 
				\node at (0.5,1) {\small$5$};
				\node at (1,1) {\small$6$};
				\node at (1.5,1) {\small$7$};
				\node at (2,1) {\small$8$};
				
				\draw [thick](-1.5,0.7)  .. controls (-1.2,0.4) and (-0.8,0.4) ..  (-0.5,0.7) ; 
				\draw [thick](-1,0.7) .. controls (-0.7,0.45) and (-0.3,0.45) ..  (0,0.7) ;

				\node at (-1.5,-1.01) {\textbullet}; 
				\node at (-1,-1.01) {\textbullet}; 
				\node at (-0.5,-1.01) {\textbullet}; 
				\node at (0,-1.01) {\textbullet};

				\node at (-1.5,-1.3) {$1'$}; 
				\node at (-1,-1.3) {$2'$}; 
				\node at (-0.5,-1.3) {$3'$}; 
				\node at (0,-1.3) {$4'$}; 
				\node at (0.5,-1.3) {$5'$};
				\node at (1,-1.3) {$6'$};
				\node at (1.5,-1.3) {$7'$};
				
				\draw[thick] (-1.5,-1)  .. controls (-1.2,-0.7) and (-0.8,-0.7) ..  (-0.5,-1) ; 
				%

				\draw[red,thick] (-1,0.7)   .. controls (-1,-0.2) and (1,0) ..  (1,-1) ; 
				\draw [red,thick](0.5,0.7)  .. controls (0.7,-0.2) and (0.3,0) ..  (0.5,-1) ; 
				
				\draw [blue,thick](1.5,0.7)   .. controls (1.6,-0.2) and (1.3,0) ..  (1.5,-1) ; 
				\draw [blue,thick](2,0.7)   .. controls (2,-0.4) and (-0.1,-1) ..  (0,-1) ; 			
				
				\node at (4,0.7) {$\tiny	\begin{pmatrix}
						1 & 2 & 3 & 4 \\
						1 & 2 &3&4 \\
					\end{pmatrix}$};	
				
				\node at (4,-0.1) {$\tiny	\begin{pmatrix}
						1 & 2  \\
						2 & 1  \\
					\end{pmatrix} \o 	\begin{pmatrix}
						3 & 4  \\
						4 & 3 \\
					\end{pmatrix} $};	
				
				\node at (4,-1) {$\tiny	\begin{pmatrix}
						1 &2 & 3 & 4 \\
						2 & 3 &1&4 \\
					\end{pmatrix}$};	
				
				\node at (7.5,-0.1) {$  \in~ _{\mathbb{C}\mathfrak{S}_{4}}\big( \mathbb{C}\mathfrak{S}_{2} \boxtimes \mathbb{C}\mathfrak{S}_{2} \big)_{\mathbb{C}\mathfrak{S}_{4}}$};

				\node at (0.5,-1.07) {{\huge\textopenbullet}};
				\node at (1,-1.07) {{\huge\textopenbullet}};
				\node at (1.5,-1.01) {\textbullet};

				\node at (-0.74,0.18) {{\huge\textopenbullet}}; 
				\node at (0.76,-0.53) {{\huge\textopenbullet}};

				\node at (0.55,0.18) {{\huge\textopenbullet}};
				\node at (0.45,-0.53) {{\huge\textopenbullet}};

				\node at (1.5,0.7) {\textbullet};
				\node at (1.52,0.2) {\textbullet};
				\node at (1.45,-0.5) {\textbullet};
				
				\node at (2,0.7) {\textbullet};
				\node at (1.88,0.2) {\textbullet};
				\node at (1.1,-0.5) {\textbullet};
				
				\node at (-0.74,0.18) {{\huge\textopenbullet}}; 
				\node at (0.76,-0.53) {{\huge\textopenbullet}};

				\node at (0.55,0.18) {{\huge\textopenbullet}};
				\node at (0.45,-0.53) {{\huge\textopenbullet}}; 
				\node at (1.5,-1.9) {(1)~Decomposition}; 
			\end{tikzpicture}
		\begin{tikzpicture}
			
			\draw [green,thick](-1.9,0.7) --(2.1,0.7);
			\draw [green,thick](2.7,0.7) --(6.2,0.7);
			\draw [green,dashed,thick](2.4,1.5) --(2.4,0);
			
			\node at (8,0.7) {$\in V_{8}(4) \boxtimes V_{7}(4)$}; 
			\node at (2.7,-0.17) {(2) ~Corresponding half-diagram}; 
			
			\node at (-1.5,0.68) {\textbullet}; 
			\node at (-1,0.65) {{\huge\textopenbullet}}; 
			\node at (-0.5,0.68) {\textbullet}; 
			\node at (0,0.65) {{\huge\textopenbullet}}; 
			\node at (0.5,0.65) {{\huge\textopenbullet}};
			\node at (1,0.68) {\textbullet};
			\node at (1.5,0.65) {{\huge\textopenbullet}};
			\node at (2,0.65) {{\huge\textopenbullet}};
			\node at (-1.5,1) {\small$1$}; 
			\node at (-1,1) {\small$2$}; 
			\node at (-0.5,1) {\small$3$}; 
			\node at (0,1) {\small$4$}; 
			\node at (0.5,1) {\small$5$};
			\node at (1,1) {\small$6$};
			\node at (1.5,1) {\small$7$};
			\node at (2,1) {\small$8$};

			\node at (2.8,0.65) {{\huge\textopenbullet}};
			\node at (3.3,0.65) {{\huge\textopenbullet}};
			\node at (3.8,0.65) {{\huge\textopenbullet}};
			\node at (4.3,0.65) {{\huge\textopenbullet}};
			\node at (4.8,0.68) {\textbullet};
			\node at (5.3,0.68) {\textbullet};
			\node at (5.8,0.68) {\textbullet};
			
			\node at (2.8,1) {\small$7'$};
			\node at (3.3,1) {\small$6'$};
			\node at (3.8,1) {\small$5'$};
			\node at (4.3,1) {\small$4'$};
			\node at (4.8,1) {\small$3'$};
			\node at (5.3,1) {\small$2'$};
			\node at (5.8,1) {\small$1'$};
			\draw [thick](-1.5,0.7)  .. controls (-1.2,0.4) and (-0.8,0.4) ..  (-0.5,0.7) ; 
			\draw [thick](-1,0.7) .. controls (-0.7,0.45) and (-0.3,0.45) ..  (0,0.7) ; 
			
			\draw [thick](4.8,0.7)  .. controls (5,0.4) and (5.6,0.4) ..  (5.8,0.7) ;

		\end{tikzpicture} 
		\vskip -0.3cm
		\caption{An example for the case  $(U~;~T, 0, 0)$ }
		\label{TT00}
\end{figure}
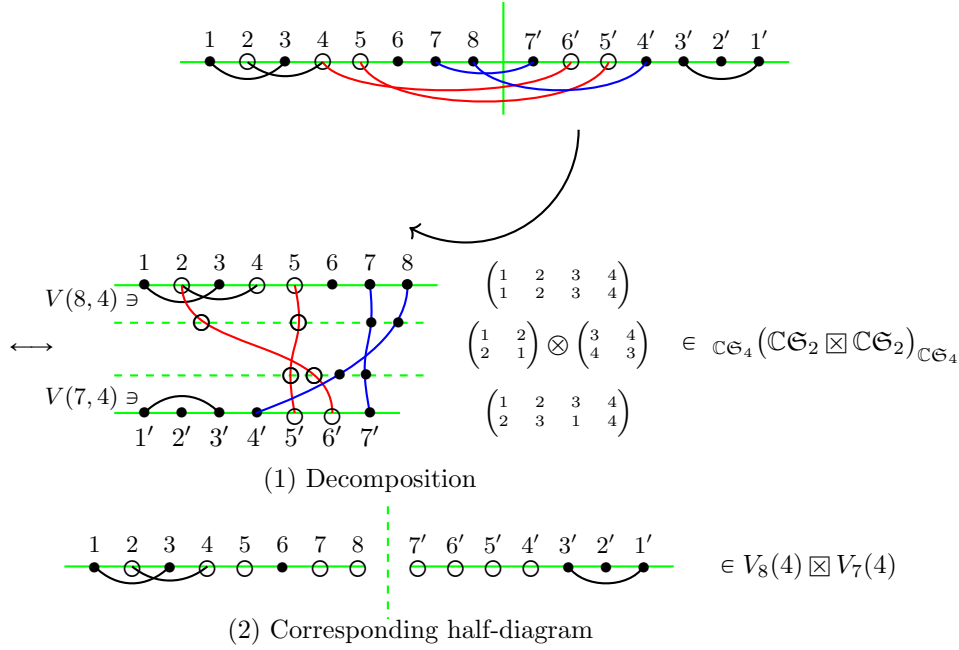

\textbf{General case.} $\mathbf{(U~;~T, L, R)}$.

Note that once the propagating strings are cut and degenerate into labeled dots, the resulting walled diagram will have \( U + T+ L \) and \( U + T+ R \) labeled blocks on the left and right sides, respectively. In comparison to Case $(U~;~T, 0, 0)$, \(\big( \mathbb{C}\mathfrak{S}_{U} \boxtimes \mathbb{C}\mathfrak{S}_{T} \big)\) is a  $\mathbb{C}\mathfrak{S}_{U+T+L,U+T+R}$-module.
 $\qed$

 \subsection{Multiplicity Reduction}
 
 The multiplicity \( E_{p,q}^r \) counts how many times \( \Delta_m((p)) \boxtimes \Delta_n((q)) \) appears in the restriction \( \Delta_{m+n}((r))\!\!\downarrow_{P_m \otimes P_n} \). By Schur's lemma and the semisimplicity of \( P_m \otimes P_n \):
 \[
 E_{p,q}^r = \dim \Hom_{P_m \otimes P_n}\left( V_m(p) \boxtimes V_n(q),\ V_{m+n}(r)\!\!\downarrow_{m,n}^{m+n} \right).
 \]
 
 The filtration (*) decomposes the restriction module into subquotients \( V_{m|n}(U; T, L, R) \).
 
 Note that $$ V_{m}(p) \boxtimes V_{n}(q) \cong (V(m, p)\otimes \mathbbm{1})\boxtimes (V(n, q)\otimes \mathbbm{1}) \cong \underbrace{ (V(m, p) \boxtimes V(n, q))}_{\text{Part I'}}\otimes\underbrace{ (\mathbbm{1} \boxtimes \mathbbm{1})}_{\text{Part II'}} $$.
 
 For a nonzero homomorphism \( \Delta_m((p)) \boxtimes \Delta_n((q)) \to V_{m|n}(U; T, L, R) \), the Part I components must satisfy:
 \[
 \underbrace{V(m,p) \boxtimes V(n,q)}_{\text{Part I}'} \cong \underbrace{V(m, U+T+L) \boxtimes V(n, U+T+R)}_{\text{Part I}}.
 \]
 This forces:
 \[
 p = U + T + L \quad \text{and} \quad q = U + T + R.
 \]
 Combined with the original condition \( T + L + R = r \), this corresponds to the equations (E1).
 
 Furthermore, by Proposition \ref{aalc}, the multiplicity reduces to computations involving Part II. The Hom-space dimension reduces to detecting trivial modules in the induced modules:
 \[
 \begin{aligned}
 	&\dim \Hom\left(\mathbbm{1}_{U+T+L} \boxtimes \mathbbm{1}_{U+T+R},~ \mathbb{C}\mathfrak{S}_U \boxtimes \mathbb{C}\mathfrak{S}_T\right) \\
 \end{aligned}
 \]
 This dimension must equal 1. This follows because the group algebra decomposes as a bimodule$^1$\footnote{$^1$This decomposition arises from the Artin-Wedderburn theorem, which splits $\mathbb{C}\mathfrak{S}_n$ into matrix algebras $\bigoplus_{\lambda} \operatorname{End}(\Sp(\lambda))$, combined with the self-duality of irreducible representations ($\Sp(\lambda) \cong \Sp(\lambda)^*$) that implies $\operatorname{End}(S^\lambda) \cong S^\lambda \otimes S^\lambda$.}:
 \[
 \mathbb{C}\mathfrak{S}_n \cong \bigoplus_{\lambda \vdash n} \left(  \Sp(\lambda) \boxtimes  \Sp(\lambda) \right),
 \]
 where each index appears exactly once.
 
 For Case \( \mathbf{(U~;~T, 0, 0)} \):
 \begin{eqnarray*}
 	_{\mathbb{C}\mathfrak{S}_{U+T}}\big( \mathbb{C}\mathfrak{S}_{U} \boxtimes \mathbb{C}\mathfrak{S}_{T} \big)_{\mathbb{C}\mathfrak{S}_{U+T}}&\cong&  \bigoplus_{\lambda \vdash U} \left( \Sp(\lambda) \boxtimes \Sp(\lambda) \right)\boxtimes \bigoplus_{\mu \vdash T} \left( \Sp(\mu)\boxtimes \Sp(\mu) \right)\\
 	& \cong &\bigoplus_{\substack{\lambda \vdash U\\ \mu \vdash T}} \left( \Sp(\lambda) \boxtimes \Sp(\mu) \right)\boxtimes  \left( \Sp(\lambda) \boxtimes \Sp(\mu) \right).
 \end{eqnarray*}
Hence the dimension is reduced to the following, and by Lemma \ref{two-part} we obtain:
 \[
 \mathrm{dim}~ \mathrm{Hom}_{\mathbb{C}\mathfrak{S}_{U+T}}(\mathbbm{1}_{U+T},~\bigoplus_{\substack{\lambda \vdash U\\ \mu \vdash T}} \left( \Sp(\lambda) \boxtimes \Sp(\mu) \right))=c^{(U+T)}_{(U), (T)}=1.
 \]
 For the general case, since the restriction and induction of the trivial module remain trivial, it reduces to the Case $(U~;~T, 0, 0)$.
 
 In summary, the dimension of the Hom-space is determined by:
 \begin{eqnarray*}
 	\dim ~\Hom_{P_m \otimes P_n}\Big (V_{m}(p) \boxtimes V_{n}(q),~ V_{m | n}(U ~;~ T,L,R )\Big) &=&\left\{
 	\begin{array}{ll}
 		1 \vspace{0.2cm}&\text{if } p=U+T+L,\\
 		\vspace{0.2cm}&\text{and } q=U+T+R;\\
 		0 &\text{otherwise.}
 	\end{array}
 	\right. 
 \end{eqnarray*}
 Thus, each subquotient contributes 1 to the multiplicity if it satisfies the conditions in the theorem. Therefore, the multiplicity \( E_{p,q}^r \) equals the number of solutions to (E1).

\section{Geometric interpretation }

\subsection{Geometric problem} 
How many non-negative integer solutions exist for the following system of equations (E1)? 

First, by adding and subtracting the equations in (E1), we can easily derive that the parameters \( p \), \( q \), and \( r \) must satisfy the inequality \( |p - q| \leq r \leq p + q \).  
When $p$, $q$, and $r$ satisfy the strict triangle inequality $|p - q| < r < p + q$, and both $p$ and $q$ are greater than 0 (in which case $r$ must also be positive),
 consider triangle \(ABC\) with integer side lengths \(p\), \(q\), and \(r\), and let \(O\) be its incenter. When drawing a circle centered at \(O\) that intersects all three sides, the intersection points create symmetric segments on each side, as shown in Figure \ref{inters}(1). Specifically:   \(|AH_1| = |AH_2| = R;\)  \(|BG_1| = |BG_2| = L;\)   \(|CF_1| = |CF_2| = U;\)   \(|H_1F_2| = |F_1G_2| = |G_1H_2| = T.\)  
When the circle is tangent to all sides (i.e., the incircle), the parameter \( T = 0 \).

The constraints on the three side lengths are given by system (E1).
The problem is equivalent to  the following plane geometry question:
 the number of non-negative integer solutions to system (E1) corresponds precisely to the number of concentric circles centered at the triangle's incenter that intersect all three sides at integer lengths, as shown in Figure  \ref{inters}(2).

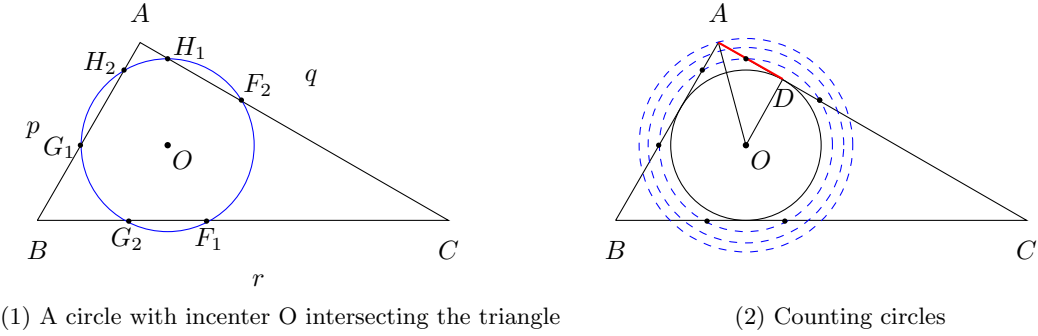
\begin{figure}[h]
	\begin{equation*}
		\begin{array}{c}
			\begin{tikzpicture}
				\coordinate (o) at (0,0);
				\coordinate (a) at (0.49,0.88) ;
				\coordinate (b) at ({(1-sqrt(3))/2}, {(1+sqrt(3))/2}) ;
				\coordinate (c) at (0:3);
				
				\draw ({-sqrt(3)},-1) -- ({(1-sqrt(3))/2}, {(1+sqrt(3))/2}) -- ({2+sqrt(3)},-1) -- ({-sqrt(3)},-1) ;
				\draw [blue] (0,0) circle [radius=1.15];
				\fill (0,0) circle (1.2pt);
					\node at (0.2, -0.2)  {$O$ };
				\coordinate[label=right:{$r$}] (r) at (1, -1.8 );
				\coordinate[label=right:{$p$}] (p) at (-2, 0.2 );
				\coordinate[label=right:{$q$}] (q) at (1.7, 0.9 );
				\fill (0,1.15) circle (1pt);
					\node at (0.3,1.30)  {$H_1$ };
				\fill (-0.58,1) circle (1pt);
								\node at (-0.9,1.1)  {$H_2$ };
				\fill (0.98,0.6) circle (1pt);
					\node at (1.2,0.8)  {$F_2$ };
				\fill (0.517,-1.01) circle (1pt);
					\node at (0.55,-1.25) {$F_1$ };
				\fill (-.517,-1.01) circle (1pt);
					\node at (-.54,-1.25) {$G_2$ };
				\fill (-1.16,0) circle (1pt);
								\node at (-1.44,-.03) {$G_1$ };
					\node at ({(1-sqrt(3))/2}, {(1+sqrt(3))/2+0.4})  {$A$ };
						\node at ({-sqrt(3)},-1.4) {$B$ };
					\node at ({2+sqrt(3)},-1.4)  {$C$ };
				\node at (1.5,-2.3) { \small \mbox{(1) A
						circle  with incenter} O  \mbox{intersecting the triangle}};
			\end{tikzpicture}
		\end{array}
		\begin{array}{c}
			\begin{tikzpicture}
				\coordinate (o) at (0,0);
				\coordinate (a) at (0.49,0.88) ;
				\coordinate (b) at ({(1-sqrt(3))/2}, {(1+sqrt(3))/2}) ;
				\coordinate (c) at (0:3);
					\node at (0.2, -0.2)  {$O$ };
					\node at ({(1-sqrt(3))/2}, {(1+sqrt(3))/2+0.4})  {$A$ };
					\node at ({-sqrt(3)},-1.4) {$B$ };
						\node at ({2+sqrt(3)},-1.4)  {$C$ };
				\draw ({-sqrt(3)},-1) -- ({(1-sqrt(3))/2}, {(1+sqrt(3))/2}) -- ({2+sqrt(3)},-1) -- ({-sqrt(3)},-1) ;
				\draw(0,0) circle [radius=1];
				\draw [dashed, blue] (0,0) circle [radius=1.15];
				\draw [dashed, blue] (0,0) circle [radius=1.3];
				\draw [dashed, blue] (0,0) circle [radius=1.42];
				\fill (0,0) circle (1.2pt);
					\draw (0, -0)--(0.49,0.88) ;
					\draw (0, -0)--({(1-sqrt(3))/2}, {(1+sqrt(3))/2}) ;
								\draw [red,thick](0.49,0.88) --({(1-sqrt(3))/2}, {(1+sqrt(3))/2}) ;
					\node at (0.5,0.63) {$D$ };
				\fill (0,1.15) circle (1pt);
				\fill (-0.58,1) circle (1pt);
				\fill (0.98,0.6) circle (1pt);
				\fill (0.98,0.6) circle (1pt);
				\fill (0.517,-1.01) circle (1pt);
				\fill (-.517,-1.01) circle (1pt);
				\fill (-1.16,0) circle (1pt);
				\node at (1.2,-2.3) { \small \mbox{ (2) Counting circles}};
			\end{tikzpicture}
		\end{array}
	\end{equation*}
	\caption{Circles intersecting line segments on three sides in integer lengths}
	\label{inters}
\end{figure}

By transforming the multiplicity problem into a geometric framework, 
we gain a more intuitive geometric understanding of the problem, as illustrated in Figure \ref{inters}(2). In this figure, $BC$ is the longest side of the triangle, and point $D$ denotes the incircle’s tangency point on side $AC$.

A circle with a radius that is either too large or too small cannot simultaneously intersect all three sides. Specifically, the lower bound for the radius is the inradius (incircle radius), while the upper bound is the distance $|AO|$ from vertex $A$ to the incenter $O$. This is equivalent to the requirement that the intersection point of the circle in question with side $AC$ must lie on segment $AD$ to ensure intersection with all three sides.

Consequently, the multiplicity is fundamentally determined by the length of the shortest segment formed by the incircle's tangency points along the sides—specifically, the length $|AD|$ of segment $AD$ in the figure.
Indeed, subsequent multiplicity calculations  demonstrate that the multiplicity equals the integer part of $|AD|$ plus one, i.e., $[|AD| ]  + 1$ (see Fig. \ref{Tri}).

\subsection{Lattice Point Enumeration} 
We now transform the problem of solving (E1) into a lattice point counting problem, as described in the following lemma.

\begin{lemma} \label{Lattice}
Let \( h = p + q - r \) and \( s = \min\{p, q\} \). Let $x$ and $y$ respectively replace $T$ and $U$. The system (E1) is equivalent to finding non-negative integer solutions to:  
\[
\left\{
\begin{array}{ll}
	y = -\dfrac{x}{2} + \dfrac{h}{2}, \\		
	y \leq -x + s.
\end{array}
\right.\eqno(\mathrm{E2})
\]  
\end{lemma}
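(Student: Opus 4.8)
The plan is to prove Lemma~\ref{Lattice} by eliminating the variables $L$ and $R$ from the linear system (E1), leaving a single linear equation in $x=T$ and $y=U$, and then checking that the four non-negativity constraints $T,L,R,U\ge 0$ translate into precisely the constraints $x\ge 0$, $y\ge 0$, and $y\le -x+s$ of (E2).

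First I would carry out the elimination. Subtracting the first equation of (E1) from the second gives $U-R=p-r$, hence $R=U-p+r$; subtracting the first from the third gives $U-L=q-r$, hence $L=U-q+r$. Substituting both into $T+L+R=r$ yields $T+2U=p+q-r=h$, i.e.\ $U=(h-T)/2$; with $x=T$ and $y=U$ this is exactly $y=-\tfrac{x}{2}+\tfrac{h}{2}$. This relation forces $T\equiv h\pmod 2$, so $U=(h-T)/2$ is automatically an integer, and conversely over the integers the equation $y=-x/2+h/2$ already imposes $x\equiv h\pmod 2$; thus the integrality bookkeeping matches on both sides and nothing is lost. Hence $(T,L,R,U)\mapsto(x,y)=(T,U)$ is a well-defined map on integer tuples, with candidate inverse sending $(x,y)$ to $(T,L,R,U)=(x,\,y+r-q,\,y+r-p,\,y)$.

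Next I would translate the inequalities. The conditions $T\ge 0$ and $U\ge 0$ are $x\ge 0$ and $y\ge 0$. The conditions $L\ge 0$ and $R\ge 0$ become $y\ge q-r$ and $y\ge p-r$, i.e.\ jointly $y\ge\max\{p,q\}-r$. Using $x=h-2y$ we have $x+y=h-y$, and since $h-s=(p+q-r)-\min\{p,q\}=\max\{p,q\}-r$, it follows that
\[
x+y\le s \iff h-y\le s \iff y\ge h-s=\max\{p,q\}-r .
\]
Therefore the single inequality $y\le -x+s$ of (E2) is equivalent to the pair $L\ge 0$, $R\ge 0$, while $y\ge 0$ is retained separately; no constraint is added or dropped. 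Reassembling the equation and the inequalities, a non-negative integer pair $(x,y)$ satisfies (E2) if and only if $(T,L,R,U)=(x,\,y+r-q,\,y+r-p,\,y)$ consists of non-negative integers satisfying (E1), which I would confirm by direct substitution into the three equations. This gives mutually inverse bijections between the two solution sets, so (E1) and (E2) have the same number of non-negative integer solutions, as claimed.

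I do not expect a genuine obstacle; the argument is routine linear bookkeeping. The one point deserving care is that the lone inequality $y\le -x+s$ must encode \emph{both} $L\ge 0$ and $R\ge 0$ simultaneously---this is exactly where $s=\min\{p,q\}$ enters, through the identity $h-s=\max\{p,q\}-r$---while one must also make sure the variable $U=y$ keeps its own non-negativity constraint rather than being silently absorbed into the others.
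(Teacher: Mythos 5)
Your proof is correct and follows essentially the same route as the paper: eliminate $L$ and $R$ linearly, obtain the single relation $T+2U=h$, and translate the four non-negativity conditions into $x\ge 0$, $y\ge 0$, $y\le -x+s$. The only cosmetic difference is that you express $L,R$ by subtracting equations rather than reading them off directly from the second and third equations, and you additionally spell out the parity bookkeeping and the explicit inverse map, which the paper leaves implicit.
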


\proof
 From the second and third equations of (E1):  
$L = p - T - U,$ $R = q - T - U.$
Substitute into the first equation:  
$T + (p - T - U) + (q - T - U) = r \implies p + q - 2U - T = r.$
Letting \( h = p + q - r \), we obtain $T + 2U = h.$
Let \( s = \min(p, q) \). Since \( L \geq 0 \) and \( R \geq 0 \), we have $T + U \leq s.$ Finally, setting \( x = T \) and \( y = U \), we arrive at the system (E2).  

Conversely, if \( x = T \) and \( y = U \) are non-negative integer solutions to (E2), then \( L = p - x - y \) and \( R = q - x - y \) are guaranteed to be non-negative (due to \( y \leq -x + s \)). Since \( x + 2y = h = p + q - r \), substituting back into the first equation of (E1) automatically satisfies the equality. $\qed$

\begin{figure}[h]
	\begin{equation*}
		\begin{tikzpicture}[scale=0.60]
			\shade[right color=gray!20,left color=gray!20]	(0,0) -- (0,6) -- (6,0) -- cycle;
			\draw[step=1cm,gray,very thin] (-1.5,-1.5) grid (10.5,6.9);
			\draw[thick,->] (0,0) -- (10,0) node[anchor=north west] {x};
			\draw[thick,->] (0,0) -- (0,6.5) node[anchor=south east] {y};
			\foreach \x in {0,1,2,3,4,5,6,7,8,9}
			\draw (\x cm,1pt) -- (\x cm,-1pt) node[anchor=north] {\footnotesize$\x$};
			\foreach \y in {0,1,2,3,4,5,6}
			\draw (1pt,\y cm) -- (-1pt,\y cm) node[anchor=east] {\footnotesize$\y$};
			
			\draw (0,6) -- (6,0);
			\draw[dashed,thick] (0,4) -- (8,0);
			\draw[dashed,thick]  (-1,3) -- (5,0);
			\filldraw[black] (8,0) circle (2pt);
			\filldraw[black] (6,1) circle (2pt);
			\filldraw[black] (4,2) circle (2pt);
			\draw[red] (4,2) circle (4pt) ;
			\filldraw[black] (2,3) circle (2pt);
			\draw[red] (2,3) circle (4pt) ;
			\filldraw[black] (0,4) circle (2pt);
			\draw[red] (0,4) circle (4pt) ;
			\filldraw[black] (5,0) circle (2pt);
			\draw[red] (5,0) circle (4pt) ;
			\filldraw[black] (3,1) circle (2pt);
			\draw[red] (3,1) circle (4pt) ;
			\filldraw[black] (1,2) circle (2pt);
			\draw[red] (1,2) circle (4pt) ;
							\node at (5,-1) {{\blue $ \parallel$}};
			\node at (5,-1.7) {{\blue \Large$h$}};
						\node at (8,-1) {{\blue $ \parallel$}};
			\node at (8,-1.7) {{\blue  \Large$h$}};
				\node at (6,-1) {{\blue $ \parallel$}};
			\node at (6,-1.7) {{\blue \LARGE$s$}};
				\draw [red,<-](3,0) -- (5,0);
				\draw [red,->](3,0) -- (3,0.8);
				\draw [red,<-](1,1) -- (3,1);
			\draw [red,->](1,1) -- (1,1.8);
		\end{tikzpicture}
	\end{equation*}
	\caption{Number of lattice points }
	\label{latti}
\end{figure}
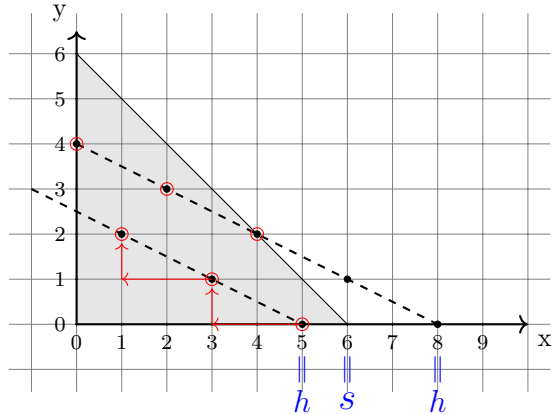

As shown in Figure \ref{latti}, $E_{p,q}^{r}$ counts the lattice points lying on the line $y = -\frac{1}{2}x + \frac{h}{2}$ that fall within the shaded triangular region. These points are enumerated through an L-shaped traversal: starting from $(h, 0)$, we iteratively move two units left followed by one unit up to reach subsequent lattice points along the line, counting only those points that remain within the shaded region. Analyzing the relationship between $s$ and $h$ yields:

\begin{eqnarray*}
	E_{p, q}^{r}&=&\left\{
	\begin{array}{ll}
		0 \vspace{0.2cm}&\mbox{if~~$2s< h$~ or ~$h< 0$,}\\\vspace{0.2cm}
		\left[ \frac{2s-h}{2} \right]+1&\mbox{if~~$ h \leqslant 2s \leqslant 2h$,}\\
		\left[ \frac{h}{2} \right]+1 &\mbox{if~~$ 0< h\leqslant s$.}
	\end{array}
	\right. 
\end{eqnarray*}

Figure \ref{latti} illustrates the cases $s=6$ with $h=8$ and $h=5$, corresponding respectively to the second and third conditions in the solution.
After substituting the parameters \(p\), \(q\), and \(r\),  the final expression is obtined:

\begin{thm}\label{Expression} Keep the notations as above.  We have
	\begin{equation*}
E_{p, q}^{r}=\left\{
	\begin{array}{ll}
		0 \vspace{0.3cm}&\mbox{if~~$p+q< r$~ or ~$r< | p-q |$,}\\\vspace{0.3cm}
		\left[ \frac{r-|p-q|}{2} \right]+1&\mbox{if~~$ | p-q |\leqslant r \leqslant \frac{p+q+|p-q|}{2} $,}\\
		\left[ \frac{p+q-r}{2} \right]+1 &\mbox{if~~$ \frac{p+q+|p-q|}{2}< r \leqslant p+q$.}
	\end{array}
	\right.
\end{equation*} 
\end{thm}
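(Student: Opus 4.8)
\noindent\textbf{Proof strategy for Theorem~\ref{Expression}.}
The plan is to extend the chain of reductions already in place by one step and then finish with an elementary interval count. By the Proposition of \S3.3, $E_{p,q}^r$ is the number of non-negative integer solutions $(T,U,L,R)$ of (E1), and by Lemma~\ref{Lattice} this number equals the number of non-negative integer pairs $(x,y)=(T,U)$ lying on the line $x+2y=h$ inside the triangle $\{x\ge 0,\ y\ge 0,\ y\le -x+s\}$, with $h=p+q-r$ and $s=\min\{p,q\}$. Since $p,q,r\in\Z$ we have $h\in\Z$, hence $x=h-2y$ is automatically an integer once $y$ is, and the count is governed entirely by the admissible range of $y$. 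Spelling out $x\ge 0$, $y\ge 0$ and $y\le -x+s=2y-h+s$ gives
\[
\max\{0,\ h-s\}\ \le\ y\ \le\ [\,h/2\,],
\]
so that $E_{p,q}^r=[\,h/2\,]-\max\{0,h-s\}+1$ whenever the right-hand side is non-negative, and $E_{p,q}^r=0$ otherwise.

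I would then split into regimes by the position of $h$ relative to $0$, $s$ and $2s$. If $h<0$ the interval is empty; if $h>2s$ then $\lceil h/2\rceil>s$, so $[\,h/2\,]=h-\lceil h/2\rceil<h-s=\max\{0,h-s\}$ and the interval is again empty — in both cases $E_{p,q}^r=0$. If $0\le h\le s$ then $\max\{0,h-s\}=0$ and $E_{p,q}^r=[\,h/2\,]+1$. If $s\le h\le 2s$ then $\max\{0,h-s\}=h-s$ and $E_{p,q}^r=[\,h/2\,]-(h-s)+1$; here the one point requiring care is the identity $[\,h/2\,]-(h-s)+1=[\,(2s-h)/2\,]+1$, which is checked by splitting on the parity of $h$ (both sides equal $s-h/2+1$ for $h$ even and $s-(h+1)/2+1$ for $h$ odd). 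This recovers the intermediate three-case formula for $E_{p,q}^r$ in terms of $h$ and $s$ displayed just before the theorem, with $h=0$ falling in the $0\le h\le s$ branch; the overlapping branches agree at $h=s$, both giving $[\,s/2\,]+1$.

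Finally I would substitute $h=p+q-r$ and use $2\min\{p,q\}=p+q-|p-q|$. This yields $2s-h=r-|p-q|$, converts $h\le 2s$ into $r\ge|p-q|$, converts $h\ge 0$ into $r\le p+q$, converts $h\le s$ into $r\ge(p+q+|p-q|)/2=\max\{p,q\}$, and converts $s\le h$ into $r\le(p+q+|p-q|)/2$. Rewriting the three regimes accordingly gives precisely the claimed formula: the empty-interval case becomes $p+q<r$ or $r<|p-q|$; the regime $s\le h\le 2s$ becomes $|p-q|\le r\le(p+q+|p-q|)/2$ with value $[\,(r-|p-q|)/2\,]+1$; and $0\le h\le s$ becomes $(p+q+|p-q|)/2\le r\le p+q$ with value $[\,(p+q-r)/2\,]+1$. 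The common endpoint $r=\max\{p,q\}$ may be placed in either of the last two branches — the theorem assigns it to the middle one — because there both expressions equal $[\,\min\{p,q\}/2\,]+1$. As a consistency check this is the arithmetic content of the picture in \S5.1: the admissible values of $y$ are in bijection with the concentric circles about the incenter meeting all three sides at integer lengths, so the count is $[\,|AD|\,]+1$ with $|AD|$ the shortest of the three tangent segments.

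The obstacle here is bookkeeping rather than mathematics: one must keep the half-open versus closed boundaries of the three regimes straight so that the piecewise formula is well defined and agrees on its overlaps, and must propagate the $\min$/$|p-q|$ substitutions through all the inequalities consistently. The only non-obvious elementary step is the parity identity $[\,h/2\,]-(h-s)+1=[\,(2s-h)/2\,]+1$, and even that is a one-line case distinction.
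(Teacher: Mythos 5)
Your proof is correct and takes essentially the same route as the paper: reduce via Lemma~\ref{Lattice} to a lattice-point count on the line $x+2y=h$ inside the triangle $\{x\ge 0,\ y\ge 0,\ x+y\le s\}$, then substitute $h=p+q-r$, $s=\min\{p,q\}$, $2s-h=r-|p-q|$ back into the intermediate three-case formula. You have in fact made the paper's terse ``L-shaped traversal'' step fully explicit — deriving the bounds $\max\{0,h-s\}\le y\le[h/2]$, verifying the parity identity, and noting the $h=0$ boundary, which the paper's intermediate display handles a little loosely.
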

\vspace{0.3cm}

Upon observing the conditions, we conclude: When $p$, $q$, and $r$ satisfy the triangular inequality, a triangle with these three numbers as side lengths can be formed. Moreover, the structure constant $E_{p, q}^{r}$  is determined by the tangent distance of opposite vertex of the largest side of the triangle relative to the incircle. Figure \ref{Tri} illustrates that the tangent distance is $(r-| p-q |)/2$ when $r$ is not the length of the largest side (left diagram, and when $ | p-q |\leqslant r \leqslant \frac{p+q+|p-q|}{2} $), and $(p+q-r)/2$ when $r$ is the length of the largest side (right diagram, and when \( \frac{p+q+|p-q|}{2}< r \leqslant p+q\)). 

\begin{figure}[h]
	\begin{equation*}
		\begin{array}{c}
				\begin{tikzpicture}
				\coordinate (o) at (0,0);
				\coordinate (a) at (0.49,0.88) ;
				\coordinate (b) at ({(1-sqrt(3))/2}, {(1+sqrt(3))/2}) ;
				\coordinate (c) at (0:3);
				
				\draw ({-sqrt(3)},-1) -- ({(1-sqrt(3))/2}, {(1+sqrt(3))/2}) -- ({2+sqrt(3)},-1) -- ({-sqrt(3)},-1) ;
				\draw(0,0) circle [radius=1];
				\fill (0,0) circle (1.2pt);
				\coordinate[label=right:{$p$}] (p) at (1, -1.8 );
				\coordinate[label=right:{$r$}] (r) at (-2, 0.2 );
				\coordinate[label=right:{$q$}] (q) at (1.7, 0.9 );
				
				\draw[decorate,decoration={brace,raise=4pt,amplitude=0.1cm},red!100] (b) -- (a)node[black,midway,xshift=0.3cm,yshift=0.66cm]{$\frac{r-|p-q|}{2} $};
				\node at (1.5,-2.3) {$r$ \small \mbox{not the largest side}};
			\end{tikzpicture}
		
		\end{array}
		\begin{array}{c}
			\begin{tikzpicture}

			\coordinate (o) at (0,0);
			\coordinate (a) at (0.49,0.88) ;
			\coordinate (b) at ({(1-sqrt(3))/2}, {(1+sqrt(3))/2}) ;
			\coordinate (c) at (0:3);
			
			\draw ({-sqrt(3)},-1) -- ({(1-sqrt(3))/2}, {(1+sqrt(3))/2}) -- ({2+sqrt(3)},-1) -- ({-sqrt(3)},-1) ;
			\draw(0,0) circle [radius=1];
			\fill (0,0) circle (1.2pt);
			\coordinate[label=right:{$r$}] (r) at (1, -1.8 );
			\coordinate[label=right:{$p$}] (p) at (-2, 0.2 );
			\coordinate[label=right:{$q$}] (q) at (1.7, 0.9 );
			\draw[decorate,decoration={brace,raise=4pt,amplitude=0.1cm},red!100] (b) -- (a)node[black,midway,xshift=0.3cm,yshift=0.66cm]{$\frac{p+q-r}{2}$};
			\node at (1.5,-2.3) {$r$ \small \mbox{the largest side}};
		\end{tikzpicture}
		\end{array}
	\end{equation*}
	\caption{Triangle interpretation }
	\label{Tri}
\end{figure}
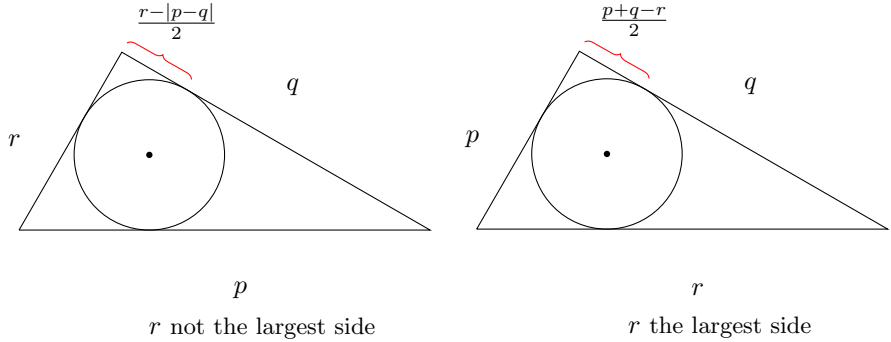

\subsection{Conic section } 
When both \( p \) and \( q \) are greater than 0 and satisfy the triangle inequality \( |p - q| < r < p + q \), the formula can also be interpreted through conic sections. As illustrated in Figure \ref{conic}(1), when the side of length \(r\) is the largest, an ellipse is determined: the length \(r\) is considered as the focal distance, and the sum of distances from any point on the ellipse to the two foci is kept constant at \(p+q\) (i.e., the major axis).

Similarly, as shown in Figure \ref{conic}(2), when the side of length \(r\) is not the largest, a hyperbola is determined: the length \(r\) serves as the focal distance, and the constant difference in distances from any point on the hyperbola to the two foci is \(|p-q|\) (i.e., the distance between two vertices).

\begin{figure}[ht]
	\centering
	\includegraphics[width=0.9\textwidth]{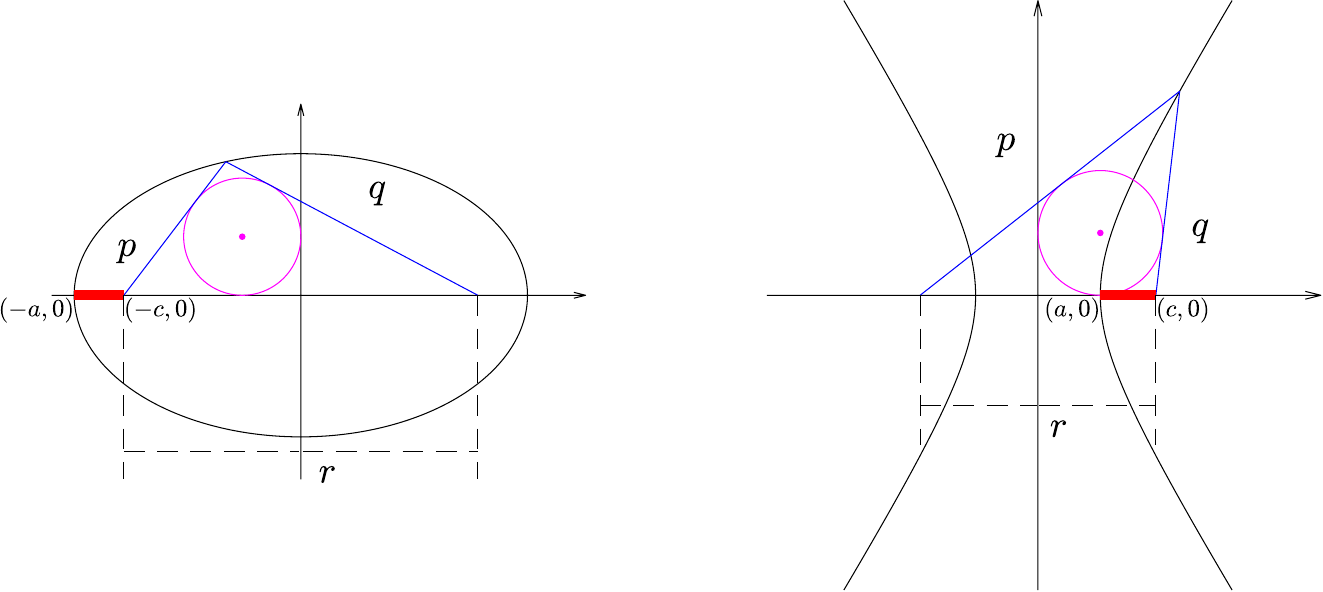}
	
	\par
\hspace{1cm} $r$ the largest side \hspace{3.8cm} $r$ not the largest side
	\caption{Interpretation by conic sections }
	\label{conic}
\end{figure}

In both Figures  \ref{conic}  (1) and  (2),  $(a, 0)$ and  $(c, 0)$ are the coordinates of the right vertex and focal point, respectively.
Consequently, we have \(E_{p, q}^{r} = [|a-c|] + 1\). This means that the multiplicity can be described using the linear eccentricity $|a-c|$.

The following lemma states some properties of \( E_{p, q}^{r} \) and addresses cases involving boundary values.  

\begin{lemma} \label{syy}
	
	\emph{(1)}~~	 $E_{p, q}^{r}=0$ for  $p+q< r$;
	
	\emph{(2)}~~$E_{p, q}^{r}=1$ for $p+q= r$ or $|p-q|= r$;
	
		\emph{(3)}~~When any one of the parameters \( p, q, r \) is zero, the multiplicity \( E_{p,q}^r \) is non-zero if and only if  the remaining two parameters are equal, in which case \( E_{p,q}^r = 1 \). 
		
	\emph{(4)}~~
	The restriction multiplicity $E_{p,q}^r$ does not depend on the degrees $m$ and $n$ of the algebras $P_n \otimes P_m$. More precisely, if it is computed in $P_n \otimes P_m$, its value remains unchanged when $(n, m)$ is replaced by any pair $(n', m')$ such that $n' \ge p,\; m' \ge q,\; n'+m' \ge r$.

	\emph{(5)}~~$	E_{p, q}^{r}=E_{p, q}^{~p+q+|p-q|-r}
$
	
\end{lemma}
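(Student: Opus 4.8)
The plan is to read off all five items from the closed‑form expression for $E_{p,q}^r$ in Theorem~\ref{Expression}, using in items (3) and (4) the description of $E_{p,q}^r$ (established in Section~3.3) as the number of non‑negative integer solutions of the system (E1). Throughout I would set $M:=\tfrac{p+q+|p-q|}{2}=\max(p,q)$, the pivot value separating the two non‑trivial branches of Theorem~\ref{Expression}; recall $|p-q|\le M\le p+q$ always, with $M=p+q$ exactly when $\min(p,q)=0$.

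Item (1) is literally the first branch of Theorem~\ref{Expression}. For item (2): if $p+q=r$ then $r\ge M$, and if $M<r$ the third branch gives $\big[\tfrac{p+q-r}{2}\big]+1=[0]+1=1$, while the degenerate subcase $r=M=p+q$ (which forces $\min(p,q)=0$) falls in the middle branch and again yields $\big[\tfrac{r-|p-q|}{2}\big]+1=1$; if $|p-q|=r$ then $r\le M$, so the middle branch applies and gives $\big[\tfrac{r-|p-q|}{2}\big]+1=[0]+1=1$. For item (3) the quickest route is to solve (E1) directly: if $r=0$ then $T+L+R=0$ forces $T=L=R=0$, whence $U=p$ and $U=q$, so there is a (unique) solution iff $p=q$; if $p=0$ then $L+T+U=0$ forces $L=T=U=0$, whence $R=r$ and $R=q$, so a solution exists iff $q=r$; the case $q=0$ is symmetric. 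In every case the solution, when it exists, is unique, so $E_{p,q}^r=1$ iff the two remaining parameters agree and $E_{p,q}^r=0$ otherwise (this is consistent with Theorem~\ref{Expression}).

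For item (4) the key observation is that the proposition in Section~3.3 identifies $E_{p,q}^r$ with the number of non‑negative integer solutions of the system (E1), and (E1) involves only $p,q,r$ — never the degrees of the algebras. The degrees enter solely to make the specialization of the Bowman–De Visscher–Orellana formula (Theorem~\ref{par-mul}) legitimate, and for that it is necessary and sufficient that the standard modules $\Delta_{n+m}((r))$, $\Delta_n((p))$, $\Delta_m((q))$ be defined, i.e. that $p\le n$, $q\le m$, and $r\le n+m$. Hence for any pair $(n',m')$ with $n'\ge p$, $m'\ge q$, $n'+m'\ge r$ the very same system (E1) is produced, so its solution count — and therefore $E_{p,q}^r$ — is unchanged; nothing in the derivation of (E1) made any further use of $n$ or $m$.

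Item (5) is a reflection symmetry. Put $r':=p+q+|p-q|-r=2M-r$; I would check case by case in Theorem~\ref{Expression} that $r\mapsto r'$ swaps the two non‑trivial branches and preserves the vanishing region. If $r<|p-q|$ then $r'>2M-|p-q|=p+q$ (and conversely), giving $0=0$; if $|p-q|\le r\le M$ then $M\le r'\le p+q$, and, using $2M-|p-q|=p+q$, the third branch gives $\big[\tfrac{p+q-r'}{2}\big]+1=\big[\tfrac{r-|p-q|}{2}\big]+1$; the range $M\le r\le p+q$ is handled symmetrically. An equivalent and perhaps cleaner route is via the lattice‑point form of Lemma~\ref{Lattice}: with $h=p+q-r$ and $s=\min(p,q)$, the map $r\mapsto r'$ is exactly $h\mapsto 2s-h$ while $s$ is fixed, and the three‑line formula of Figure~\ref{latti} is manifestly invariant under this substitution. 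The only genuinely delicate point in the whole lemma is the bookkeeping of the boundary values $r=|p-q|$, $r=M$, $r=p+q$, making sure each is assigned to the correct branch of the piecewise formula; beyond that, and beyond pinning down in item (4) exactly which hypotheses on the degrees are used, the arguments are routine.
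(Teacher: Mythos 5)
Your proposal is correct, and it follows essentially the same route the paper intends: the paper's proof of Lemma~\ref{syy} is simply the sentence ``The conclusions are derived through straightforward calculations,'' and you have supplied exactly those calculations, reading items (1), (2), (5) off the closed form in Theorem~\ref{Expression} (with the equivalent lattice-point picture from Lemma~\ref{Lattice} as a check), and items (3), (4) off the characterization of $E_{p,q}^r$ as the solution count of the system (E1). Your handling of the boundary cases $r=|p-q|$, $r=\max(p,q)$, $r=p+q$ is careful and agrees with the piecewise formula, and the observation in item (4) --- that (E1) involves only $p,q,r$ and the degrees enter solely through the well-definedness constraints $n'\ge p$, $m'\ge q$, $n'+m'\ge r$ --- is exactly the right justification.
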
 
\proof  The conclusions are derived through straightforward calculations. $\qed$

In Lemma \ref{syy}(5), the equation demonstrates symmetry when \(p, q\) are fixed, and \(r\) varies. As depicted in Figure \ref{L4}, we provide a geometric interpretation for this equation. An ellipse is defined by a triangle \(ABC\) with side lengths \(p, q\), and \(r\) (assuming the length of the largest side is \(r\)); then, adjacent to it, a hyperbola is constructed. The right focus of the hyperbola and the right vertex become the left focus \(B\) and the left vertex \(F\) of the ellipse, respectively, with the focal length being \(|p-q|\). Subsequently, point \(D\) is selected on the hyperbola such that \(DB=q\) and \(DE=p\), where point \(E\) is the hyperbola's left focus. As a result, triangle \(DEB\) is formed, in which the length of side \(EB\) equals \(p+q+|p-q|-r\). Equation (4) is valid because, for both the ellipse and the hyperbola, the distance from the focal point to the vertex on one side is represented by \(|FB|\).

\begin{figure}[ht]
	\centering
	\includegraphics[width=0.6\textwidth]{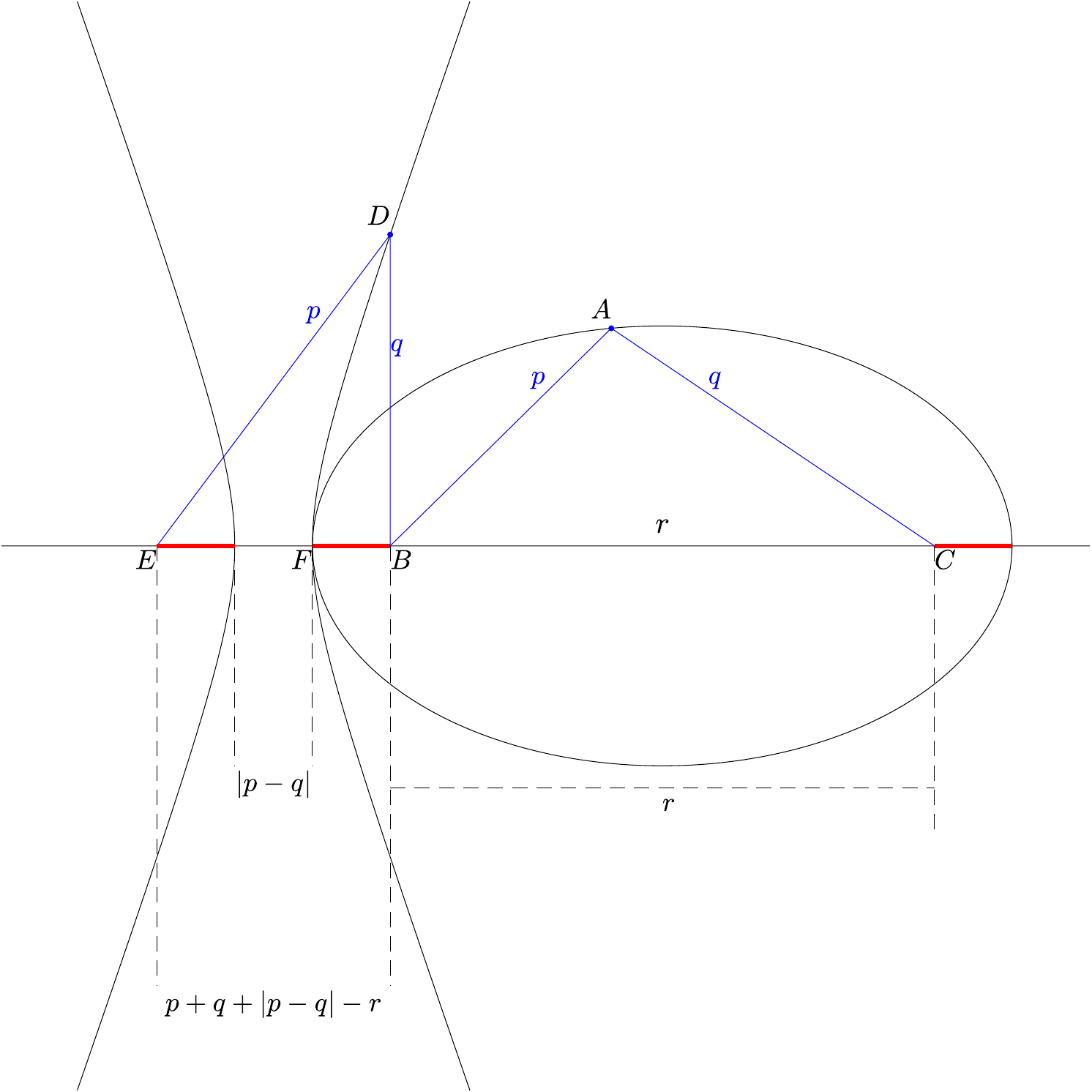}
	\caption{Geometric interpretation of Lemma 3.1 (4) }
	\label{L4}
\end{figure}

\section{Temperley–Lieb algebra  }

The Temperley–Lieb algebra $\mathrm{TL}_n$, which can be realized as a subalgebra of the partition algebra $P_n$, was originally introduced by Temperley and Lieb in the study of problems in statistical mechanics \cite{TL}. It was later rediscovered by Vaughan Jones in his work on subfactors and knot theory \cite{Jones2,Jones85,Jones87}. Kauffman subsequently realized it as a diagram algebra \cite{K1,K2,K3}. The Temperley–Lieb algebra has since appeared in a wide range of mathematical and physical contexts. For example, it plays a role in the construction of link invariants \cite{BW89}, in the representation theory of tensor categories and diagram algebras \cite{BM05,GW93}, and in solvable lattice models in statistical mechanics \cite{Martin90,Martin91,Martin94}. For further details, see the books \cite{GdlHJ89,Bowman} and the survey article \cite{DG,RSA14}.

\begin{figure}[h]
	\begin{equation*}
		\begin{array}{c}
			\begin{tikzpicture}
				\draw [green,thick](-0.4,1.5) --(2.8,1.5);
				\draw [green,thick](-0.4,0) --(2.8,0);
				\draw [green,thick](-0.4,1.5) --(-0.4,0);
				\draw [green,thick](2.8,1.5) --(2.8,0);
				\node at (0,1.8) {$1$}; 
				\node at (0.8,1.8) {$2$};
				\node at (1.6,1.8) {$3$};
				\node at (2.4,1.8) {$4$};
				\node at (0,-0.3) {$1'$}; 
				\node at (0.8,-0.3) {$2'$};
				\node at (1.6,-0.3) {$3'$};
				\node at (2.4,-0.3) {$4'$};
				\node at (0,1.5) {\textbullet}; 
				\node at (0.8,1.5) {\textbullet};
				\node at (1.6,1.5) {\textbullet};
				\node at (2.4,1.5) {\textbullet};
				\node at (0,0) {\textbullet}; 
				\node at (0.8,0) {\textbullet};
				\node at (1.6,0) {\textbullet};
				\node at (2.4,0) {\textbullet};
				\draw [thick](0.8,1.5) .. controls (1,1.1) and (1.4,1.1) ..  (1.6,1.5); 
				\draw [thick](0,1.5) .. controls (0.4,0.7) and (2,0.7) ..  (2.4,1.5); 
				\draw [thick](0,0) .. controls (0.2,0.4) and (0.6,0.4) ..  (0.8,0); 
				\draw [thick](1.6,0) .. controls (1.8,0.4) and (2.2,0.4) ..  (2.4,0); 
				
				\draw [green,thick](3.2,1.5) --(6.4,1.5); 
				\draw [green,thick](3.2,0) --(6.4,0);
				\draw [green,thick](3.2,1.5) --(3.2,0);
				\draw [green,thick](6.4,1.5) --(6.4,0);
				\node at (3.6,1.8) {$1$}; 
				\node at (4.4,1.8) {$2$};
				\node at (5.2,1.8) {$3$};
				\node at (6,1.8) {$4$};
				\node at (3.6,-0.3) {$1'$}; 
				\node at (4.4,-0.3) {$2'$};
				\node at (5.2,-0.3) {$3'$};
				\node at (6,-0.3) {$4'$};
				\node at (3.6,1.5) {\textbullet}; 
				\node at (4.4,1.5) {\textbullet};
				\node at (5.2,1.5) {\textbullet};
				\node at (6,1.5) {\textbullet};
				\node at (3.6,0) {\textbullet}; 
				\node at (4.4,0) {\textbullet};
				\node at (5.2,0) {\textbullet};
				\node at (6,0) {\textbullet};
				\draw [thick](4.4,1.5) .. controls (4.6,1.1) and (5,1.1) ..  (5.2,1.5); 
				\draw [thick](3.6,0) .. controls (3.8,0.4) and (4.2,0.4) ..  (4.4,0); 
				\draw [thick](3.6,1.5) .. controls (3.8,0.8) and (4.8,0.8) ..  (5.2,0); 
				\draw [thick](6,1.5)--(6,0);
			\end{tikzpicture}
		\end{array}
	\end{equation*}
	\vskip -0.3cm
	\caption{Temperley–Lieb diagrams }
	\label{TD}
\end{figure}

In the Temperley–Lieb algebra $\mathrm{TL}_n$, each block of the partition has exactly two elements, and the partition diagram must be planar (i.e. no two blocks cross; see Figure \ref{TD}).
Its half-diagram possesses the following characteristics: there are two types of blocks, one with a single labeled dot and the other containing just two dots, which are unlabeled and typically called a cap. Due to the non-crossing property of the diagram, caps do not intersect, and a labeled dot cannot be `inside' any cap. In particular, for any cap ${i,j}$ with $i<j$ and any labeled single-dot block ${k}$, the forbidden configuration $i<k<j$ can never occur. Figures \ref{2PD} (2) and (3) serve as counterexamples. All half-diagrams with $r$ labeled blocks are denoted by $V_n(r)$. 

\begin{figure}[h]
	\begin{equation*}
		\begin{array}{c}
			\begin{tikzpicture}
				\draw [green,thick](-0.4,0.5) --(3.4,0.5);
				\node at (0,0.8) {$1$}; 
				\node at (0.6,0.8) {$2$};
				\node at (1.2,0.8) {$3$};
				\node at (1.8,0.8) {$4$};
				\node at (2.4,0.8) {$5$};
				\node at (3,0.8) {$6$};
			
				
				\draw [thick](0.6,0.5) .. controls (0.8,0.1) and (2.2,0.1) ..  (2.4,0.5) ; 
				\draw [thick](1.2,0.5) .. controls (1.4,0.3) and (1.6,0.3) ..  (1.8,0.5);
				\node at (1.4,-0.7) { (1)};
				\node at (1.4,-0.2) { \small A half-diagram};
				\node at (0,0.47) {{\huge\textopenbullet}}; 
				\node at (0.6,0.5) {\textbullet};
				\node at (1.2,0.5) {\textbullet};
				\node at (1.8,0.5) {\textbullet};
				\node at (2.4,0.5) {\textbullet};
				\node at (3,0.47) {{\huge\textopenbullet}};
			\end{tikzpicture}
		\end{array}
		\begin{array}{c}
			\begin{tikzpicture}
				\draw [green,thick](-0.4,0.5) --(3.4,0.5);
				\node at (0,0.8) {$1$}; 
				\node at (0.6,0.8) {$2$};
				\node at (1.2,0.8) {$3$};
				\node at (1.8,0.8) {$4$};
				\node at (2.4,0.8) {$5$};
				\node at (3,0.8) {$6$};
	
				\node at (0,0.47) {{\huge\textopenbullet}}; 
				\node at (0.6,0.5) {\textbullet};
				\node at (1.2,0.5) {\textbullet};
				\node at (1.8,0.5) {\textbullet};
				\node at (2.4,0.5) {\textbullet};
				\node at (3,0.47) {{\huge\textopenbullet}};
				\draw [thick](0.6,0.5) .. controls (0.8,0.1) and (1.6,0.1) ..  (1.8,0.5); 
				\draw [thick](1.2,0.5) .. controls (1.4,0.2) and (2.2,0.2) ..  (2.4,0.5);
				\node at (1.4,-0.7) { (2)};
				\node at (1.4,-0.2) { \small \textcolor{red}{Not}  a half-diagram};
			\end{tikzpicture}
		\end{array}
		\begin{array}{c}
			\begin{tikzpicture}
				\draw [green,thick](-0.4,0.5) --(3.4,0.5);
				\node at (0,0.8) {$1$}; 
				\node at (0.6,0.8) {$2$};
				\node at (1.2,0.8) {$3$};
				\node at (1.8,0.8) {$4$};
				\node at (2.4,0.8) {$5$};
				\node at (3,0.8) {$6$};
	
				\node at (0,0.47) {{\huge\textopenbullet}}; 
				\node at (0.6,0.5) {\textbullet};
				\node at (1.2,0.5) {\textbullet};
				\node at (1.8,0.5) {\textbullet};
				\node at (2.4,0.47) {{\huge\textopenbullet}};
				\node at (3,0.5) {\textbullet};
				\draw [thick](0.6,0.5) .. controls (1.4,0) and (2.2,0) ..  (3,0.5); 
				\draw [thick](1.2,0.5) .. controls (1.4,0.2) and (1.6,0.2) ..  (1.8,0.5);
				\node at (1.4,-0.7) { (3)};
				\node at (1.4,-0.2) { \small \textcolor{red}{Not}  a half-diagram};
			\end{tikzpicture}
		\end{array}
	\end{equation*}
	\vskip -0.3cm
	\caption{Half-diagrams}
	\label{2PD}
\end{figure}
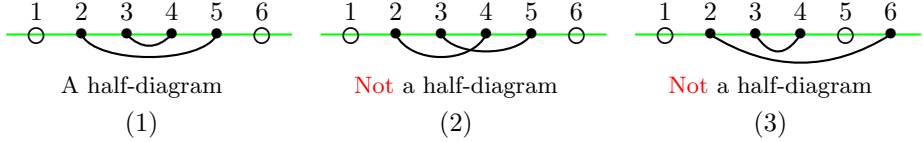

The $(m | n,r)$-walled half-diagram is obtained by adding a wall between dots $m$ and $n'$  with index $r$, taking into account the action of $\mathrm{TL}_m\otimes \mathrm{TL}_n$ on the  diagram. 
We continue to use the previous notation, but it should be noted that for the Temperley--Lieb algebra, the number of through-labeled blocks \(T\) in a walled half-diagram is always equal to \(0\). Furthermore, due to the non-crossing property of Temperley--Lieb diagrams, the part II mentioned in the theorem does not occur. Therefore, the version of Proposition \ref{iso} for the Temperley–Lieb algebra is described as the following \(\mathrm{TL}_m \otimes \mathrm{TL}_n\)-module isomorphism. In article \cite{WP}, we directly proved this isomorphism through the non-crossing property of Temperley--Lieb diagrams.

\begin{prop} [{\cite[Proposition 2.2]{WP}}] \label{forTL}
	As \(\mathrm{TL}_m \otimes \mathrm{TL}_n\)-modules,
	$$V_{m|n}(U~;~ 0,L,R ) \cong V_{m}(U+L)\boxtimes V_n(U+R) .$$
\end{prop}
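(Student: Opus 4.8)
The plan is to recognise that Proposition~\ref{forTL} is the Temperley--Lieb specialization of Proposition~\ref{iso} and to prove it directly by exhibiting an explicit basis bijection, which is cleaner here because, unlike in the partition-algebra case, there is no symmetric-group factor to keep track of. The first step is to record that $T=0$ for every Temperley--Lieb walled half-diagram: a through-block must meet both sides of the wall, but in a Temperley--Lieb half-diagram a labeled block is a single dot, which lies on one side only, so every through-block is an unlabeled cap. Hence the only indices occurring are of the form $(U~;~0,L,R)$, and necessarily $U+L\le m$, $U+R\le n$, $m\equiv U+L$ and $n\equiv U+R$ modulo $2$.

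The second step is to set up the \emph{cutting map}. Given an $(m\,|\,n,r)$-walled half-diagram $v$ of index $(U~;~0,L,R)$, cut its $U$ through-caps along the wall, converting the endpoint in $\{1,\dots,m\}$ of each into a labeled single dot and the endpoint in $\{n',\dots,1'\}$ of each into a labeled single dot. This yields a set partition $v_1$ of $\{1,\dots,m\}$ with $U+L$ labeled blocks and a set partition $v_2$ of $\{1,\dots,n\}$ with $U+R$ labeled blocks. Because $v$ is non-crossing, no left-hand cap of $v$ can separate a through-endpoint from the wall (it would cross that through-cap), so no labeled dot of $v_1$ ends up inside a cap of $v_1$; thus $v_1\in V_m(U+L)$, and similarly $v_2\in V_n(U+R)$. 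Setting $\Phi(v)=v_1\boxtimes v_2$ and extending linearly gives a map $V_{m|n}(U~;~0,L,R)\to V_m(U+L)\boxtimes V_n(U+R)$.

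The third step --- the crux --- is to show that $\Phi$ is bijective on the distinguished bases by checking that the reconstruction is \emph{forced}. First, the same ``no labeled dot inside a cap'' argument shows that in $v_1$ the through-endpoints are exactly the $U$ labeled dots nearest the wall, and likewise for $v_2$; and any two through-caps of $v$ must be nested, the one whose left endpoint is nearer the wall being the inner one. Consequently, given $v_1\in V_m(U+L)$ with labeled dots $a_1<\dots<a_{U+L}$ and $v_2\in V_n(U+R)$ with labeled dots $b_1<\dots<b_{U+R}$ listed in the walled order (so $b_1$ is nearest the wall), the only non-crossing reattachment is to join $a_{U+L}$ to $b_1$, $a_{U+L-1}$ to $b_2$, \dots, $a_{L+1}$ to $b_U$ in nested fashion, leaving $a_1,\dots,a_L$ left-labeled and $b_{U+1},\dots,b_{U+R}$ right-labeled; the Temperley--Lieb conditions on $v_1$ and $v_2$ guarantee the result is a valid non-crossing walled half-diagram of index $(U~;~0,L,R)$. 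This reconstruction inverts $\Phi$, so $\Phi$ is a vector-space isomorphism.

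The final step is $\mathrm{TL}_m\otimes\mathrm{TL}_n$-equivariance, which I would verify on the generators $e_i^{L}$, $e_i^{R}$ and the identities. The generator $e_i^{L}$ acts only on the dots $1,\dots,m$, i.e. after cutting only on $v_1$, and the diagrammatic surgery it performs on the left part of $v$ is identical to its action on $v_1$; the one point to watch is the vanishing clauses: if $e_i^{L}$ drops the left labeled-block count below $U+L$ (cases (IV)/(V) of Lemma~\ref{order}) or merges two through-caps into a right-hand cap, lowering $U$ (case (III)), then $e_i^{L}\cdot v$ lands in a strictly smaller term of the filtration $(*)$ and so is $0$ in the subquotient, while on the other side the same surgery reduces the labeled-block count of $v_1$ below $U+L$ and hence kills it in $V_m(U+L)$; so the two sides agree. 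The symmetric statement holds for $e_i^{R}$, and together with Step~3 this finishes the proof; alternatively one may simply specialize Proposition~\ref{iso}, noting that when $T=0$ the factor $\mathbb{C}\mathfrak{S}_T$ is trivial and non-crossing forces the middle permutation in $\mathbb{C}\mathfrak{S}_U$ to be the identity, so ``Part II'' collapses and the balanced tensor product disappears (compare \cite[Proposition 2.2]{WP}). I expect Step~3 --- rigidifying the reattachment of through-caps via non-crossing and the no-labeled-dot-inside-a-cap rule --- to be the main obstacle; the equivariance step is routine bookkeeping on generators.
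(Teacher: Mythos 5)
Your proof is correct and follows essentially the same route as the paper's cited source \cite{WP} and the paper's own remarks around Proposition~\ref{forTL}: a direct cut-and-reattach bijection on the distinguished bases forced by the non-crossing constraint, together with the observation that $T=0$ trivializes the symmetric-group factor of Proposition~\ref{iso}. (A tiny notational slip in Step~3 --- whether $b_1<\cdots<b_{U+R}$ increases numerically or in the walled order --- does not affect the argument.)
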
 
Meanwhile, \(T\) in equation (\(*\)) is zero. This implies that \(E_{p, q}^{r}= 1\) if the incircle of the triangle formed by \(p\), \(q\), and \(r\)   intersects the three sides at integer lengths (i.e., as shown in Figure \ref{Tri}, the lengths of the line segments obtained by the incircle intersecting each side are integers). Otherwise, \(E_{p, q}^{r}= 0\).

However, the incircle necessarily cuts each of the three sides into segments of integer length.
This is because, first,
note that  the index \(r\) counts labeled dots. The combinatorial structure of Temperley--Lieb half-diagrams implies that each cap connects two dots, enforcing the parity constraints:
\[
r \equiv m+n \pmod{2}, \quad p \equiv m \pmod{2}, \quad q \equiv n \pmod{2}.
\]

In addition, we have the following geometric fact:
\begin{quote}
	\textbf{Proposition}: For a triangle with integer side lengths \(p, q, r\), the incircle intersects each side at integer lengths if and only if \(p + q + r\) is even.
\end{quote}

From the parity constraints, we derive:
\[
p + q + r \equiv m + n + (m + n) \equiv 2(m + n) \equiv 0 \pmod{2}
\]
Thus, when \(p, q, r\) satisfy the triangle inequalities, the integer tangency condition automatically holds as a consequence of the parity constraints.

\subsection{Grothendieck group } 
Let $\mathrm{TL}(\delta):=\mathrm{TL}_{0}\subset \mathrm{TL}_{1}\subset \mathrm{TL}_{2} \subset \cdots$~be a tower of the Temperley-Lieb algebras $\mathrm{TL}_{n}$ with $\de\in \mathbb{C}$ as a parameter. As we know, when \(\delta\) is not a root of unity, \(\mathrm{TL}_n\) is a semisimple algebra, and all half-diagram modules \(V_{n}(r)\) are precisely all the  irreducible modules. In this context, the Grothendieck group \(G_0(\mathrm{TL}_n)\) can be described as the free abelian group generated by the isomorphism classes \([V_{n}(r)]\) of half-diagram modules \(V_{n}(r)\).  Every module's class $[M]$ in \(G_0(\mathrm{TL}_n)\) can be expressed uniquely as a sum of the classes  \([V_{n}(r)]\), weighted by their multiplicities in the module.

Furthermore, we can define a multiplication on the Grothendieck group, turning it into a ring.
Let $G_{0}(\mathrm{TL}):=\oplus_{n\geqslant 0}G_{0}(\mathrm{TL}_{n})$.  Define the  multiplication on $G_{0}(\mathrm{TL})$ as follows:

$$i_{m,n} :   G_{0}(\mathrm{TL}_{m})\otimes_{\mathbb{Z}} G_{0}(\mathrm{TL}_{n})\rightarrow G_{0}(\mathrm{TL}_{m+n})$$
$$[M]\cdot [N] \mapsto [( M\boxtimes N)\uparrow_{m,n}^{m+n}],$$  where
$$( M\boxtimes N)\uparrow_{m,n}^{m+n}= \mathrm{TL}_{m+n} \bigotimes_{ \mathrm{TL}_{m} \otimes  \mathrm{TL}_{n}}(M\boxtimes N).$$ 

In article \cite{WP}, we proved that the structure constants of this multiplication are precisely the multiplicities \(E_{p, q}^{r}\). Below, in conjunction with Figure \ref{Tri}, we restate this theorem using planar triangle\footnote{The number of caps is indicated by the index in Theorem 1.1 and Proposition 2.2 in reference \cite{WP}. If the number of the labeled blocks is used as the index instead, it is equivalent to Proposition \ref{forTL} and Theorem \ref{cons99} presented here.
	} .

\begin{thm}[\cite{WP}, Theorem 1.1]\label{cons99}
	Let \(\mathrm{TL}(\delta)\) be a tower of the Temperley-Lieb algebras with \(\delta \in \mathbb{C}\) not a root of unity, and let \(V_n(r)\) denote the half-diagram module of \(\mathrm{TL}_{n}\) indexed by the number \(r\) of labeled dots. Then, for each \(m,n \in \mathbb{N}\), 
	the multiplication in the Grothendieck ring \(G_{0}(\mathrm{TL})\) is given by
	\[
	[V_{m}(p)] \cdot [V_{n}(q)] = \sum_{\substack{r=0 \\ r \equiv m+n \pmod{2}}}^{m+n} E_{p,q}^{r} [V_{m+n}(r)],
	\]  
	where the multiplicity coefficient is defined by
	\[
	E_{p,q}^{r} = 
	\begin{cases} 
		1 & \text{if } |p - q| \leq r \leq p + q  \text{ (i.e., \(p, q, r\) form a triangle, including degenerate cases)}, \\
		0 & \text{otherwise}.
	\end{cases}
	\]
\end{thm}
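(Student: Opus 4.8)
The plan is to pass from the Grothendieck-ring product, which is defined via induction, to a restriction multiplicity, and then to read that multiplicity off the walled-module filtration $(*)$ using Proposition~\ref{forTL}.

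First I would fix $m,n$ and an integer $r$ with $0\le r\le m+n$ and $r\equiv m+n\pmod 2$, so that $V_{m+n}(r)\neq 0$. By the definition of the product on $G_0(\mathrm{TL})$, the coefficient of $[V_{m+n}(r)]$ in $[V_m(p)]\cdot[V_n(q)]$ is the composition multiplicity of $V_{m+n}(r)$ in $(V_m(p)\boxtimes V_n(q))\!\uparrow_{m,n}^{m+n}$. Since $\delta$ is not a root of unity, the algebras $\mathrm{TL}_{m+n}$, $\mathrm{TL}_m$, $\mathrm{TL}_n$, and hence $\mathrm{TL}_m\otimes\mathrm{TL}_n$, are split semisimple over $\mathbb{C}$; $V_{m+n}(r)$ is irreducible, and $V_m(p)\boxtimes V_n(q)$ is irreducible as an external tensor product of irreducibles. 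Using the induction–restriction adjunction, together with the fact that in a split semisimple category the composition multiplicity of an irreducible equals the dimension of the Hom-space into it and also the dimension of the Hom-space out of it, this coefficient equals
\[
\dim\Hom_{\mathrm{TL}_m\otimes\mathrm{TL}_n}\!\bigl(V_m(p)\boxtimes V_n(q),\,V_{m+n}(r)\!\downarrow_{m,n}^{m+n}\bigr)=E_{p,q}^r,
\]
the multiplicity of $V_m(p)\boxtimes V_n(q)$ in the restriction. So it is enough to compute this restriction multiplicity.

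Next I would apply the filtration $(*)$ to $V_{m+n}(r)\!\downarrow_{m,n}^{m+n}$. For Temperley–Lieb diagrams every through-block is a cap, so the number $T$ of through-labeled blocks vanishes for every $(m\,|\,n,r)$-walled half-diagram, and an index is a triple $(U;0,L,R)$ with $L+R=r$. By Proposition~\ref{forTL} the corresponding subquotient is $V_{m|n}(U;0,L,R)\cong V_m(U+L)\boxtimes V_n(U+R)$ as $\mathrm{TL}_m\otimes\mathrm{TL}_n$-modules, and semisimplicity makes the filtration split, giving
\[
V_{m+n}(r)\!\downarrow_{m,n}^{m+n}\;\cong\;\bigoplus_{\substack{U,L,R\ge 0\\ L+R=r}}V_m(U+L)\boxtimes V_n(U+R),
\]
with a summand read as $0$ when no such walled half-diagram exists, i.e. when $V_m(U+L)=0$ or $V_n(U+R)=0$. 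Each nonzero summand being irreducible, $E_{p,q}^r$ is the number of triples $(U,L,R)$ of non-negative integers with $L+R=r$, $U+L=p$, $U+R=q$.

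Finally I would solve this $3\times3$ linear system: it has the unique rational solution $U=\tfrac{p+q-r}{2}$, $L=\tfrac{r+p-q}{2}$, $R=\tfrac{r-p+q}{2}$. The parity constraints $r\equiv m+n$, $p\equiv m$, $q\equiv n\pmod 2$ — which hold because $r$ counts labeled dots in an $(m+n)$-diagram and $p,q$ count labeled dots in an $m$- resp.\ $n$-diagram — force $p+q-r$ to be even, so $U$, and hence $L=p-U$ and $R=q-U$, is an integer. This triple is non-negative exactly when $p+q-r\ge 0$, $r+p-q\ge 0$ and $r-p+q\ge 0$, that is, when $|p-q|\le r\le p+q$; in that case the inequalities $U+L=p\le m$ and $U+R=q\le n$ needed for $V_m(p)\neq 0$ and $V_n(q)\neq 0$ hold by the standing hypothesis that $V_m(p)$ and $V_n(q)$ are half-diagram modules. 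Hence $E_{p,q}^r=1$ for $|p-q|\le r\le p+q$ and $E_{p,q}^r=0$ otherwise, and summing over the admissible $r$ yields the asserted multiplication rule. I expect the main difficulty to lie not in any one step but in assembling them: orienting the induction–restriction adjunction correctly and genuinely invoking semisimplicity (hence the root-of-unity hypothesis on $\delta$), and, in the last step, observing that the parity of $p,q,r$ makes the unique solution of the $3\times3$ system automatically integral, so that every triangle-admissible value of $r$ is in fact attained and contributes exactly $1$.
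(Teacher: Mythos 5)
Your proposal is correct and follows essentially the same route as the paper: Frobenius reciprocity reduces the Grothendieck-ring coefficient to the restriction multiplicity, the $T=0$ observation for Temperley--Lieb together with Proposition~\ref{forTL} reduces this to counting solutions of the three-equation linear system, and the parity constraints force the unique rational solution to be integral, so that exactly the triangle-admissible $r$ contribute. The only stylistic difference is that you solve the $3\times 3$ system explicitly and invoke parity directly, whereas the paper phrases the same integrality condition geometrically via the incircle-tangency criterion; the content is identical.
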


\section*{Acknowledgements}
The authors wish to express their sincere gratitude to colleagues for their constructive comments during the development of this manuscript. Their valuable suggestions have significantly enhanced the quality of this work.



\begin{thebibliography}{99}
	
	\bibitem[BH19]{BH}
	G.~Benkart and T.~Halverson,
	Partition algebras $P_k(n)$ with $2k>n$ and the fundamental theorems of invariant theory for the symmetric group $S_n$,
	in: \emph{Recent Trends in Algebraic Combinatorics}, Association for Women in Mathematics Series, Springer, Cham, 2019, pp.~1--41.
	
	\bibitem[BHH17]{BHH}
	G.~Benkart, T.~Halverson, and N.~Harman,
	Dimensions of irreducible modules for partition algebras and tensor power multiplicities for symmetric and alternating groups,
	\emph{J.~Algebr.~Combin.} \textbf{46} (2017), 77--108.
	
	\bibitem[BM05]{BM05}
	G.~Benkart and D.~Moon,
	Tensor product representations of Temperley--Lieb algebras and Chebyshev polynomials,
	in: \emph{Representations of Algebras and Related Topics}, Fields Inst. Commun., Vol.~45, Amer. Math. Soc., Providence, RI, 2005, pp.~57--80.
	
	\bibitem[BW89]{BW89}
	J.~S. Birman and H.~Wenzl,
	Braids, link polynomials and a new algebra,
	\emph{Trans.~Amer.~Math.~Soc.} \textbf{313} (1989), 249--273.
	
	\bibitem[Bow25]{Bowman}
	C.~Bowman,
	\emph{Diagrammatic Algebra}, Universitext, Springer, Cham, 2025.
	
	\bibitem[BDM22a]{BDM22}
	C.~Bowman, S.~Doty, and S.~Martin,
	Integral Schur--Weyl duality for partition algebras,
	\emph{Algebr.~Comb.} \textbf{5} (2022), 371--399.
	
	\bibitem[BDM22b]{BDM22II}
	C.~Bowman, S.~Doty, and S.~Martin,
	An integral second fundamental theorem of invariant theory for partition algebras,
	\emph{Represent.~Theory} \textbf{26} (2022), 437--454.
	
	\bibitem[BDVK15]{BDVK15}
	C.~Bowman, M.~de~Visscher, and O.~King,
	The blocks of the partition algebra in positive characteristic,
	\emph{Algebr.~Represent.~Theory} \textbf{18} (2015), 1357--1388.
	
	\bibitem[BVC15]{BVC}
	C.~Bowman, M.~de~Visscher, and R.~Orellana,
	The partition algebra and the Kronecker coefficients,
	\emph{Trans.~Amer.~Math.~Soc.} \textbf{367} (2015), 3647--3667.
	
	\bibitem[Br37]{Br}
	R.~Brauer,
	On algebras which are connected with the semisimple continuous groups,
	\emph{Ann.~of Math.} \textbf{38} (1937), 854--872.
	
	\bibitem[DG23]{DG}
	S.~Doty and A.~Giaquinto,
	Origins of the Temperley--Lieb algebra: early history,
	available at arXiv:2307.11929.
	
	\bibitem[East11]{East}
	J.~East,
	Generators and relations for partition monoids and algebras,
	\emph{J.~Algebra} \textbf{339} (2011), 1--26.
	
	\bibitem[FH03]{FaH}
	J.~Farina and T.~Halverson,
	Character orthogonality for the partition algebra and fixed points of permutations,
	\emph{Adv.~Appl.~Math.} \textbf{31} (2003), 113--131.
	
	\bibitem[GdlHJ89]{GdlHJ89}
	F.~M. Goodman, P.~de~la~Harpe, and V.~F.~R. Jones,
	\emph{Coxeter Graphs and Towers of Algebras},
	Mathematical Sciences Research Institute Publications, Vol.~14, Springer, New York, 1989.
	
	\bibitem[GW93]{GW93}
	F.~M. Goodman and H.~Wenzl,
	The Temperley--Lieb algebra at roots of unity,
	\emph{Pacific J.~Math.} \textbf{161} (1993), 307--334.
	
	\bibitem[GL96]{GL}
	J.~J. Graham and G.~I. Lehrer,
	Cellular algebras,
	\emph{Invent.~Math.} \textbf{123} (1996), 1--34.
	
	\bibitem[Hal01]{H}
	T.~Halverson,
	Characters of the partition algebras,
	\emph{J.~Algebra} \textbf{238} (2001), 502--533.
	
	\bibitem[HJ20]{HJ20}
	T.~Halverson and T.~N. Jacobson,
	Set-partition tableaux and representations of diagram algebras,
	\emph{Algebraic Combin.} \textbf{3} (2020), 509--538.
	
	\bibitem[HL05]{HL05}
	T.~Halverson and T.~Lewandowski,
	RSK insertion for set partitions and diagram algebras,
	\emph{Electron.~J.~Combin.} \textbf{11} (2005), R24.
	
	\bibitem[HR05]{HR05}
	T.~Halverson and A.~Ram,
	Partition algebras,
	\emph{European J.~Combin.} \textbf{26} (2005), 869--921.
	
	\bibitem[HP06]{HP1}
	R.~Hartmann and R.~Paget,
	Young modules and filtration multiplicities for Brauer algebras,
	\emph{Math.~Z.} \textbf{254} (2006), 333--357.
	
	\bibitem[JK81]{jk}
	G.~D. James and A.~Kerber,
	\emph{The Representation Theory of the Symmetric Group},
	Encyclopedia of Mathematics and its Applications, Vol.~16, Addison--Wesley, Reading, MA, 1981.
	
	\bibitem[Jo94]{Jones1}
	V.~F.~R. Jones,
	The Potts model and the symmetric group,
	in: \emph{Subfactors} (Kyuzeso, 1993), World Sci. Publ., River Edge, NJ, 1994, pp.~259--267.
	
	\bibitem[Jo83]{Jones2}
	V.~F.~R. Jones,
	Index for subfactors,
	\emph{Invent.~Math.} \textbf{72} (1983), 1--25.
	
	\bibitem[Jo85]{Jones85}
	V.~F.~R. Jones,
	A polynomial invariant for knots via von Neumann algebras,
	\emph{Bull.~Amer.~Math.~Soc.~(N.S.)} \textbf{12} (1985), 103--111.
	
	\bibitem[Jo87]{Jones87}
	V.~F.~R. Jones,
	Hecke algebra representations of braid groups and link polynomials,
	\emph{Ann.~of Math.~(2)} \textbf{126} (1987), 335--388.
	
	\bibitem[Kau87]{K1}
	L.~H. Kauffman,
	State models and the Jones polynomial,
	\emph{Topology} \textbf{26} (1987), 395--407.
	
	\bibitem[Kau88]{K2}
	L.~H. Kauffman,
	Statistical mechanics and the Jones polynomial,
	in: \emph{Braids}, Contemp. Math., Vol.~78, Amer. Math. Soc., Providence, RI, 1988, pp.~263--297.
	
	\bibitem[Kau94]{K3}
	L.~H. Kauffman and S.~Lins,
	\emph{Temperley--Lieb Recoupling Theory and Invariants of 3--Manifolds},
	Princeton University Press, Princeton, NJ, 1994.
	
	\bibitem[KX99a]{KX1}
	S.~K\"onig and C.~C. Xi,
	On the structure of cellular algebras,
	in: \emph{Algebras and Modules, II} (Geiranger, 1996), CMS Conf. Proc., Vol.~24, Amer. Math. Soc., Providence, RI, 1998, pp.~365--386.
	
	\bibitem[KX99b]{KX0}
	S.~K\"onig and C.~C. Xi,
	Cellular algebras: inflations and Morita equivalences,
	\emph{J.~London Math.~Soc.~(2)} \textbf{60} (1999), 700--722.
	
	\bibitem[MR98]{MR98}
	P.~P. Martin and G.~Rollet,
	The Potts model representation and a Robinson--Schensted correspondence for the partition algebra,
	\emph{Compos.~Math.} \textbf{112} (1998), 237--254.
	
	\bibitem[Mar90]{Martin90}
	P.~P. Martin,
	Temperley--Lieb algebras and the long distance properties of statistical mechanical models,
	\emph{J.~Phys.~A} \textbf{23} (1990), 7--30.
	
	\bibitem[Mar91]{Martin91}
	P.~P. Martin,
	\emph{Potts Models and Related Problems in Statistical Mechanics},
	Series on Advances in Statistical Mechanics, Vol.~5, World Scientific, Teaneck, NJ, 1991.
	
	\bibitem[Mar94]{Martin94}
	P.~P. Martin,
	Temperley--Lieb algebras for nonplanar statistical mechanics---the partition algebra construction,
	\emph{J.~Knot Theory Ramifications} \textbf{3} (1994), 51--82.
	
	\bibitem[Mar96]{Martin96}
	P.~P. Martin,
	The structure of the partition algebras,
	\emph{J.~Algebra} \textbf{183} (1996), 319--358.
	
	\bibitem[Mar00]{Martin00}
	P.~P. Martin,
	The partition algebra and the Potts model transfer matrix spectrum in high dimensions,
	\emph{J.~Phys.~A} \textbf{33} (2000), 3669--3695.
	

	
	\bibitem[RSA14]{RSA14}
	D.~Ridout and Y.~Saint-Aubin,
	Standard modules, induction and the structure of the Temperley--Lieb algebra,
	\emph{Adv.~Theor.~Math.~Phys.} \textbf{18} (2014), 957--1041.
	
	\bibitem[TL71]{TL}
	H.~N.~V. Temperley and E.~H. Lieb,
	Relations between the percolation and colouring problem and other graph theoretical problem associated with a regular planar lattice: some exact results for the percolation problem,
	\emph{Proc.~Roy.~Soc.~London Ser.~A} \textbf{322} (1971), 251--280.
	
	\bibitem[Wang19]{WP}
P. Wang,	The Grothendieck group of a tower of the Temperley--Lieb algebras,
	\emph{J.~Algebra Appl.} \textbf{18} (2019), 1950136.
	
	\bibitem[Xi99]{Xi}
	C.~C. Xi,
	Partition algebras are cellular,
	\emph{Compos.~Math.} \textbf{119} (1999), 99--109.
	
\end{thebibliography}
\end{document}